\newcommand{\undertitle}[1]{}
\newtheorem{theorem}{Theorem}[section]
\newtheorem{lemma}[theorem]{Lemma}
\newtheorem{proposition}[theorem]{Proposition}
\newtheorem{remark}[theorem]{Remark}
\def\Qvec{\mathbf{Q}}
\def \Mvec{\mathbf{M}}
\def\nvec{\mathbf{n}}
\def\Hvec{\mathbf{H}}
\def\d{\mathrm{d}}
\title{A study of ferronematic thin films including a stray field energy}
\author{
Shilpa Dutta\thanks{Institute of Mathematics, 
University of W\"urzburg, Germany, 
\href{mailto:shilpa.dutta@uni-wuerzburg.de}{\texttt{shilpa.dutta@uni-wuerzburg.de}}} ,\quad
James Dalby\thanks{Department of Mathematics,
King's College London, UK, 
\href{mailto:james.dalby@kcl.ac.uk}{\texttt{james.dalby@kcl.ac.uk}}} ,\quad
Apala Majumdar\thanks{University of Strathclyde,
Glasgow, Scotland, 
\href{mailto:apala.majumdar@strath.ac.uk}{\texttt{apala.majumdar@strath.ac.uk}}} ,\quad
Anja Schlömerkemper\thanks{Institute of Mathematics,
University of W\"urzburg, Germany, 
\href{mailto:anja.schloemerkemper@uni-wuerzburg.de}{\texttt{anja.schloemerkemper@uni-wuerzburg.de}}}
}
\begin{document}
\maketitle
\begin{abstract}
\noindent Ferronematic materials are colloidal suspensions of magnetic particles in liquid crystals. They are complex materials with potential applications in display technologies, sensors, microfluidics devices, etc. We consider a model for ferronematics in a 2D domain with a variational approach. The proposed free energy of the ferronematic system depends on the Landau-de Gennes (LdG) order parameter $\Qvec$ and the magnetization $\Mvec$, and incorporates the complex interaction between the liquid crystal molecules and the magnetic particles in the presence of an external magnetic field $\Hvec_{ext}$. The energy functional combines the Landau-de Gennes nematic energy density and energy densities from the theory of micromagnetics including (an approximation of) the stray field energy and energetic contributions from an external magnetic field. For the proposed ferronematic energy, we first prove the existence of an energy minimizer and then the uniqueness of the minimizer in certain parameter regimes. Secondly, we numerically compute stable ferronematic equilibria by solving the gradient flow equations associated with the proposed ferronematic energy. The numerical results show that the stray field influences the localization of the interior nematic defects and magnetic vortices.
\end{abstract}
\textbf{\textit{Keywords}}
Ferronematic materials, stray field, thin-films, gradient flow equations, Crank-Nicolson finite difference scheme. \\[0.5em]
\textbf{\textit{MSCcodes}}
68Q25, 68R10, 68U05

\section{Introduction}\label{sec:1}
 We investigate a ferronematic model in a two-dimensional domain that combines the Landau-de Gennes (LdG) theory of nematic liquid crystals \cite{Gennes} and the theory of micromagnetics \cite{brown1966magnetoelastic} allowing for the study of both the nematic ordering and the magnetization (from the suspended magnetized particles), including their interaction in the presence of an external magnetic field in a unified framework.

Nematic liquid crystals (NLCs) are prominent examples of mesophases or liquid crystalline phases that combine the fluidity of liquids with the directional order of solids \cite{Gennes}. The nematic liquid crystal phase (NLCs) is characterized by the averaged local alignment of the nematic molecules along common axes, known as directors. Consequently, NLCs are directional materials and are sensitive to external factors such as electric fields, magnetic fields, light, and temperature \cite{lagerwall2012new}. Their anisotropic or direction-dependent response to light and electric fields makes them ideal materials for various electro-optic applications. Since the response of NLCs to an external magnetic field is much weaker than that to an external electric field, the use of NLCs in magneto-optics applications is found to be limited \cite{Shui}. Therefore, a natural question arises of how to or whether one can strengthen the nematic response to an external magnetic field.

This concept was first introduced by Brochard and de Gennes more than five decades ago in their pioneering work \cite{brochard1970theory}, where they proposed the existence of a ferromagnetic phase in a colloidal suspension of magnetic nanoparticles (MNPs) in nematic liquid crystals (NLCs) at room temperature, i.e., a spontaneous magnetization without an external magnetic field. These so-called ferronematics exhibit a stronger response to external magnetic fields compared to pure NLCs. The directors of NLCs can be controlled by the coupling between MNPs and NLCs (NLC-MNP interaction). Similarly, the spontaneous magnetization induced by the suspended MNPs can be tailored by nematic directors via NLC-MNP interactions. In \cite{chen1983observation, Mertelj}, the authors experimentally confirm the existence of a ferromagnetic phase via a suspension of MNPs in a NLC host. For recent theoretical work on ferronematics, we refer to \cite{Bisht2020, Canevari2025, Canevari, Dalby, maity2021parameter}, see below for details.

The present model of a two-dimensional ferronematic material is inspired by the Landau de Gennes (LdG) energy for nematic materials in two dimensions \cite{Gennes, han2020reduced} and the micromagnetic energy for ferromagnetic materials reduced to two dimensions.   Our model extends the ferronematic model outlined in \cite{Bisht2020}. The ferronematic phase is described by two state variables on a bounded domain $\Omega \subset \mathbb{R}^2$, one of which is the reduced LdG order parameter $\Qvec: \Omega\rightarrow\mathbb{R}^{3\times 3}$ with $\Qvec^T=\Qvec, \mathrm{tr}{\Qvec}=0, Q_{3i}=Q_{i3}=0$, $i=1,2,3$ (only valid for two-dimensional scenarios and for certain choices of the boundary conditions). The eigenvector corresponding to the largest positive eigenvalue of $\Qvec$ models the nematic director $\nvec:\Omega\to\mathbb{R}^3$, c.f.\ \eqref{eq:en4}. The second state variable is the spontaneous magnetization $\Mvec: \Omega\rightarrow\mathbb{R}^3$. 

The micromagnetic part of the energy contains the exchange energy, the stray field energy, and the Zeeman energy. We refer to \cite{brown1966magnetoelastic, hubert2008magnetic,kruzikprohl} for an introduction to micromagnetics.  The exchange energy $\int_{\Omega}|\nabla\Mvec|^2\mathrm{d}x$ is invoked as an energy penalty to penalize jumps and rapid gradients in $\Mvec$.   
 The stray field $\Hvec_{\Mvec}$ is generated by the magnetization $\Mvec$. In three dimensions, the stray field is given by a solution to the stationary Maxwell equation, cf.\ e.g.\ \cite{kruzikprohl}. The stray field energy  is then given by $\frac{1}{2}\int_{\mathbb{R}^3} |\Hvec_\Mvec|^2 = -\frac{1}{2}\int_{\mathbb{R}^3} \Hvec_{\Mvec} \cdot \Mvec$ with $\Mvec$ being extended by zero to the whole space. The stray field energy may contribute to the formation of boundary vortices, edge-curling domain walls, interior walls, etc.~\cite{desimone2004recent}. 
 In this article, we approximate the stray field energy by a reduced model as derived in Gioia and James \cite{Giogia} for very thin materials, see also \cite[Sec.~10]{desimone2004recent,Garcia2004}. Then the stray field energy on the two-dimensional domain $\Omega$ reads $\frac{1}{2}\int_{\Omega}\left(M_3\right)^2\d x$, where $M_3$ denotes the third component of $\Mvec:\Omega\to \mathbb{R}^3$; see Section~\ref{sec:2} for more details.
Furthermore, we assume that the ferronematic material is exposed to a given external magnetic field $\Hvec_{ext}$; this is a three-dimensional vector field that in experiments is given on $\mathbb R^3$. However, in our analytical and numerical investigation, only its regularity on $\Omega$ is of relevance. The interaction of $\Hvec_{ext}$ with $\Mvec$ is modelled by the Zeeman energy $-\int_{\Omega}\Hvec_{ext}\cdot\Mvec\, \d x$. 

Finally, the energy not only contains Ginzburg-Landau type potentials for $\Qvec$ and  $\Mvec$ but also a coupling of $\Mvec$ and $\Qvec$, via $-\int_\Omega \Qvec \Mvec \cdot \Mvec$, cf.\ also \cite{Pleiner}. Following, \cite{Calderer, Mertelj}, we also consider the coupling between an external magnetic field $\Hvec_{ext}$ and $\Qvec$ in the form $-\int_\Omega \Qvec \Hvec_{ext} \cdot \Hvec_{ext}$. 
As we show in Section~\ref{sec:nondim}, the dimensionless form of the ferronematic energy  in two dimensions is given by 
\begin{eqnarray}\label{en:2}
\mathcal{E}\left(\Qvec, \Mvec\right)&=&\int_{\Omega}\frac{l_1}{2}|\nabla \Qvec|^2 + \frac{\xi l_2}{2}|\nabla \Mvec|^2 - \frac{c_1}{2}\Qvec\Mvec\cdot\Mvec 
+ \frac{c_3}{2}\xi\left(M_3\right)^2
\nonumber\\
&& \;\;- \frac{c_2}{2}\Qvec\Hvec_{ext}\cdot\Hvec_{ext} 
- c_3\xi\Mvec\cdot\Hvec_{ext} + \frac{1}{4}\left(\frac{1}{2}|\Qvec|^2 - 1\right)^2 + \frac{\xi}{4}\left(|\Mvec|^2 - 1\right)^2 \,\d x,
\end{eqnarray}
where
$l_1$, $l_2$, $c_1$, $c_2$, $c_3$ and $\xi$ are  positive parameters that are specified in the nondimensionalization in Section~\ref{sec:nondim}.  

The present work,~based on the proposed ferronematic energy \eqref{en:2}, has two main objectives: first, to investigate the existence of a minimizer of the energy \eqref{en:2} and to show the uniqueness of the energy minimizer in certain parameter regimes;
 and second, to numerically examine the influence of the stray field, as well as the combined role of a stray field and an external magnetic field on stable ferronematic equilibria in Sections~\ref{sec:5} and \ref{sec:numerical_experiments}. The existence of a solution for the energy minimization problem (Theorem~\ref{thm:1}) follows from the direct method in the calculus of variations and the uniqueness result (Theorem~\ref{thm:2}) is based on an application of the maximum principle.  To prove the uniqueness theorem, we derive $L^{\infty}$-bounds for $\Qvec$ and $\Mvec$ and estimates of the second partial derivatives of the energy density; the result then follows by contradiction. 

The latter part of the manuscript focuses on numerical computations of physically observable ferronematic profiles (modelled as local minimizers of the ferronematic energy). Using the energy dissipation law, we derive the gradient flow system that corresponds to the Euler-Lagrange equations for the ferronematic energy \eqref{en:2} in the stationary setting. We then employ the Crank-Nicolson finite difference technique (see e.g.\ \cite{Burden-Faires}) to solve the gradient-flow system supplemented with initial and boundary conditions. The choice of the prescribed initial and boundary conditions is motivated by \cite{Canevari}. Since the gradient flow system is a coupled non-linear system of PDEs, we use Newton's linearization technique to formulate the Crank-Nicolson discretized scheme as this allows us to tackle the convergence issues that arise due to nonlinearities.

Before we outline the results of our article further, we comment on previous work in the literature. The proposed energy \eqref{en:2} reduces to the ferronematic energy in \cite{Bisht2020} when $M_3=0$ and $\Hvec_{ext}=\mathbf{0}$. The ferronematic energy in \cite{Bisht2020} is based on the Landau description for ferronematic phase transition \cite{Pleiner}, that includes the magnetic energy due to the penalization of magnetic saturation and does not account the contribution of the stray field. In \cite{Bisht2020}, the authors first numerically compute stable ferronematic equilibria on a 2D square domain with tangent boundary conditions and report two types of stable equilibria featuring magnetic domain walls with pairs of interior and boundary nematic defects. Also, they show that the NLC-MNP coupling tailors the location and multiplicity of nematic defects and magnetic vortices. Later, Han et al.~\cite{han2021tailored} extend the previous work to 2D pentagons and hexagons and observe the co-existence of similar stable states. In this article, we investigate the effect of an approximated stray field energy and a Zeeman energy on the microstructure of the materials and consider \cite{Bisht2020} as a reference case for our numerical simulations.   

Next, we summarize relevant work with different perspectives. 
In \cite{maity2021parameter}, the authors work on the asymptotic analysis of the global minimizers of a rescaled ferronematic energy in the limit of vanishing elastic constant, i.e., in the $l_1=l_2=l\rightarrow 0$ limit. Their results focus on the strong $H^1$-convergence and an $l$-independent $H^2$-bound for global energy minimizers on a smooth, bounded 2D domain with Dirichlet boundary conditions and the results are complemented by numerical experiments. Canevari et al.~\cite{Canevari} have shown that in the super-dilute regime, the LdG order parameter is a canonical harmonic map and the limit of the magnetization is a singular vector field with line defects that connect point defects along a minimal connection. The super-dilute regime is a limiting case for which the magnetic energy has a significantly lesser contribution than the NLC energy and the NLC-MNP interaction remains weak. Their theoretical results are also supported by numerical experiments. Recently, that work has been extended to the mixed boundary conditions (Dirichlet type for $\Qvec$ and Neumann type for $\Mvec$) in \cite{Canevari2025}. 
In \cite{Dalby}, the authors study order reconstruction phenomena and perform bifurcation analysis of a ferronematic energy, in one-dimensional channel geometries.

The aforementioned studies do not consider the presence of stray fields and external magnetic fields. 
 However, the stray field is relevant because it can tailor arrangements of magnetic vortices for purely magnetic materials \cite{hubert2008magnetic} and these phenomena can be further influenced by an external magnetic field. Some interesting features, e.g., switching mechanisms and birefringence \cite{chen1983observation, Mertelj} of ferronematic substances when exposed to an external magnetic field have been observed experimentally. From the theoretical perspective, a variational homogenized ferronematic model that accounts for the presence of an external magnetic field in the Oseen-Frank framework can be found in \cite{Calderer}. 

The Gioia-James approximation \cite{Giogia} for the stray field energy does not lead to any criterion on the thickness of the material sample and is applicable for many practical purposes, e.g.\ films used in storage devices with a thickness of about 250 \AA, and recording devices with a thickness less than 100 \AA.  Moreover, we refer to \cite{kalousek2021mathematical}, where the authors deal with the coupling of the three-dimensional magnetization with the two-dimensional fluid flow in an evolutionary magnetoviscoelastic model, counting the contribution of the stray field using the Gioia and James approximation. As an aside, we mention that there are other stray field limiting cases available in the literature, e.g.\ \cite{desimone2002reduced, di2023reduced, kohn2005another} for the pure ferromagnetic thin-film regime and these different limiting cases are constructed based on their varied behavior within the film's interior and at the domain edge. It will be a challenging and interesting task to deal with other limiting cases or the full stray field energy in the study of ferronematic thin films in future works.

This article is structured as follows: we describe the notation used in this paper in \Cref{not}. We introduce the generalized ferronematic energy in \Cref{sec:2}. In \Cref{sec:4}, we prove the existence of a minimizer for our generalized ferronematic energy and prove the uniqueness of the minimizer in certain parameter regimes (Theorems~\ref{thm:1} and \ref{thm:2}). In \Cref{sec:5}, we explore the behavior of nematic and magnetic configurations due to the combined influence of a stray field and an external magnetic field, by numerically solving the gradient flow system \eqref{eq:26} for the introduced ferronematic energy \eqref{en:2} using the Crank-Nicolson finite difference scheme. The numerical results show that the stray field has an impact on the multiplicity and locations of interior nematic defects and magnetic vortices.  We close our study with some concluding remarks in \Cref{sec:6}.

\section{Notation}\label{not} This section defines the notation used throughout this paper: $\textbf{e}_i$ is the $i$-th standard basis vector in $\mathbb R^3$ for $1\le i\le 3$; $\delta_{ij}$ denotes the standard Kronecker delta function, i.e., $\delta_{ij}=1$ if $i=j$, and $\delta_{ij}=0$ if $i\ne j$; $a\cdot b$ is the scalar product for two vectors $a, b\in\mathbb{R}^3$, i.e., $a\cdot b=\sum_{i=1}^{3}a_ib_i$; $a\times b$ defines the standard cross product for $a, b\in\mathbb{R}^3$; $a\otimes b$ stands for the dyadic product for $a, b\in\mathbb{R}^3$, i.e., $\left(a\otimes b\right)=a_ib_j$ for any $i,j\in\{1,2,3\}$. Moreover, $\mathbb{R}^{3\times 3}$ denotes the set of $3\times 3$-matrices with real entries; $SO\left(3\right)$ is a subset of matrices $A \in \mathbb R^{3\times 3}$ such that $AA^T=A^TA=\mathbb{I}$ and $\det(A)=1$, where $\mathbb{I}$ denotes the identity matrix. For symmetric and traceless $z$-invariant LdG order parameters $\Qvec\in\mathbb{R}^{3\times 3}$ with $Q_{3j}=Q_{i3}=0$ for $i,j =1,2,3$, and $\Mvec\in \mathbb{R}^3$ we set
\begin{eqnarray}
|\Qvec|^2=\sum_{i,j=1}^{2}|Q_{ij}|^2;~|\nabla\Qvec|^2=\sum_{i,j=1}^2 \sum_{k=1}^2|\nabla_k Q_{ij}|^2,\nonumber\\
|\Mvec|^2=\sum_{i=1}^{3}|M_i|^2;~|\nabla\Mvec|^2=\sum_{i=1}^{3}\sum_{k=1}^{2}|\nabla_k M_i|^2~.\nonumber
\end{eqnarray}
We assume $\Omega\subset\mathbb {R}^2$ to be bounded, connected, open and with Lipschitz boundary. 
For $1\leq p< \infty$ and $k\in \mathbb{N}_0$, the function spaces are used as per the standard definition:
\begin{eqnarray}
C\left(\overline{\Omega}\right)&=& \bigg\{f:\overline{\Omega}\rightarrow\mathbb R: f \mbox{ is continuous, } \sup_{\overline{\Omega}}|f|<\infty  \bigg\},\nonumber\\
C^k\left(\Omega\right)&=& \bigg\{f:\Omega\rightarrow\mathbb R: f \mbox{ is } k\mbox{-times continuously differentiable and has finite norm}  \bigg\} \nonumber \\ && \mbox{ for } k=0,1,2,..., \nonumber\\
C^{k}_{c}\left(\Omega\right)&=& \bigg\{f:\Omega\rightarrow\mathbb R: f\in C^k\left(\Omega\right) \mbox{ with compact support}\bigg\} \mbox{ for } k=0,1,2,..., \nonumber\\
C\left(\overline{\Omega}; \mathbb R^3\right)&=& \bigg\{f:\bar{\Omega}\rightarrow\mathbb R^3: f=\left(f_1, f_2, f_3\right) \mbox{ with } f_i\in C\left(\overline{\Omega}\right), \mbox{ for } i=1,2,3 \bigg\},\nonumber\\
L^p\left(\Omega; \mathbb R^3\right) &=& \bigg\{f:\Omega\rightarrow\mathbb R^3: f \mbox{ is Lebesgue measurable, } \|f\|_{L^p(\Omega ;\mathbb{R}^3)}=\left(\int_{\Omega}|f|^p\d x\right)^{\frac{1}{p}}<\infty \bigg\},\nonumber\\
L^{\infty}\left(\Omega; \mathbb R^3\right) &=& \bigg\{f:\Omega\rightarrow\mathbb R^3: f \mbox{ is Lebesgue measurable, } \|f\|_{L^\infty(\Omega; \mathbb{R}^3)} = \operatorname*{ess\,sup}_{\Omega}~|f|<\infty \bigg\},\nonumber\\
W^{k,p}\left(\Omega; \mathbb R^3\right)&=&\bigg\{f\in L^p\left(\Omega; \mathbb R^3\right): D^{\alpha}f\in L^p\left(\Omega; \mathbb R^3\right)~\forall |\alpha|\le k\bigg\}\nonumber,
\end{eqnarray}
where the derivative $D^{\alpha}$ is interpreted in the weak sense. \sloppy In the case $p=2$, we set $H^k\left(\Omega; \mathbb R^3\right)=W^{k,2}\left(\Omega; \mathbb R^3\right).$ If the range of the functions is $\mathbb{R}^{3\times 3}$, the spaces are defined correspondingly.

\section{Ferronematic energy}
\label{sec:2}
As mentioned in the introduction, we study ferronematics confined to two-dimensional domains $\Omega\subset\mathbb R^2$. The ferronematic phase is described by two order parameters: the reduced Landau-de Gennes (LdG) $\Qvec$-tensor order parameter for the nematic phase that describes the orientational ordering of the ferronematic phase (see \cite{han2020reduced}), and a magnetization vector $\Mvec: \Omega\rightarrow\mathbb R^3$ to describe the magnetic properties of the ferronematic phase. In three dimensions, the full LdG $\Qvec$-tensor order parameter is a symmetric, traceless $3\times 3$ matrix with five degrees of freedom \cite{Gennes}.  
In two-dimensional scenarios such as ours, one can use the reduced LdG (rLdG) description for which the physically relevant $\Qvec$-tensors have a fixed eigenvector with an associated fixed eigenvalue and all quantities of interest are independent of the $z$-coordinate \cite{han2021tailored}; this automatically implies that the rLdG $\Qvec$-tensor has two degrees of freedom as opposed to five for the fully three-dimensional scenario. This dimension reduction from three dimensions to two dimensions can be rigorously justified by using $\Gamma$-convergence techniques \cite{golovaty2015dimension}. We refer to $\Qvec$ as the LdG order parameter, instead of the rLdG order parameter in the remainder of the text, for brevity. We define the set of matrices: 
\begin{eqnarray}
\mathcal{Q}=\bigl\{A\in\mathbb R^{3\times 3}: A^T=A, \operatorname{tr}(A)=0, A_{3i}=A_{i3}=0, \, i=1,2, 3 \bigl\},\nonumber
\end{eqnarray} and $\Qvec \in \mathcal{Q}$ in subsequent sections. One can also write $\Qvec$ as follows:
\begin{eqnarray}\label{eq:en4}
\Qvec=s\left(2\nvec\otimes\nvec - \mathbb{J}\right),
\end{eqnarray}
where $\nvec = (n_1, n_2, 0)$ is the nematic director or the eigenvector of $\Qvec$ with the largest positive eigenvalue, $s$ is the scalar order parameter that measures the degree of the orientational order about $\nvec$ and
\begin{displaymath}
\mathbb{J}=\begin{pmatrix}
1 & 0 & 0\\
0 & 1 & 0\\
0 & 0 & 0
\end{pmatrix}.
\end{displaymath}
In particular, one can label planar defects in $\mathbb{R}^2$ as being the nodal set of the scalar order parameter $s$ \cite{han2021tailored}.

In what follows, we measure the magnetization $\Mvec=\left(M_1, M_2, M_3\right): \Omega\rightarrow\mathbb R^3$ in units of a saturation magnetisation, $M_s$, so that $\Mvec$ is dimensionless by definition, as is the LdG order parameter.

The magnetization $\Mvec$  generates a non-local magnetic field which is denoted by $\Hvec_{\Mvec}: \mathbb R^3\rightarrow\mathbb R^3$ and is called the stray field \cite{brown1966magnetoelastic}. Following the theory of micromagnetics, we assume the material to be a non-conducting ferromagnetic material. 
Here, instead  
 we work with an approximated stray field energy derived by Gioia and James \cite{Giogia} to overcome the technical difficulties by the non-locality. The approximation is a 3D to 2D reduction of the stray field energy obtained by using asymptotic analysis based on weak convergence methods. In this approximation, the stray field contributes locally to the film's interior without any additional penalization on the film's edges, the stray field energy is independent of the space coordinate normal to the film and there is no remnant of the stationary Maxwell equation. In the approximation by Gioia and James \cite{Giogia}  for very thin films that we use in our two-dimensional framework, the stray field energy $-\frac{1}{2}c_3\xi\int_\Omega \Hvec_\Mvec \cdot \Mvec \,  \d x$ is replaced by $\frac{1}{2}c_3\xi\int_\Omega (M_3)^2 \d x$. 

Further, we assume that there is a constant dimensionless (re-scaled) external magnetic field denoted by $\Hvec_{ext} = (H_1, H_2, H_3): \mathbb R^3\rightarrow\mathbb R^3$, that interacts with the nematic order parameter and the magnetization. Following \cite{Calderer, Mertelj}, we model the interaction between an external magnetic field and the nematic order parameter by the energy density
\begin{eqnarray}\label{eq:en6'}
- \chi_1\Qvec\Hvec_{ext}\cdot\Hvec_{ext},
\end{eqnarray}
where $\chi_1$ is a characteristic positive material-dependent constant related to magnetic susceptibility/anisotropy of the ferronematic material. 

The total ferronematic energy comprises several contributions: the LdG energy for the nematic/orientational ordering in two dimensions \cite{Gennes, han2020reduced}, a Ginzburg-Landau type energy for $\Mvec$ including the exchange energy for the magnetization \cite{brown1966magnetoelastic, hubert2008magnetic}, a nemato-magnetic coupling energy that describes the coupling between $\Qvec$ (or the nematic director $\nvec$) and $\Mvec$ via the surface-induced coupling on the magnetic nanoparticles \cite{Bisht2020}, an approximated stray field energy \cite{Giogia}, the Zeeman energy that accounts for the interaction between $\Hvec_{ext}$ and $\Mvec$ \cite{brown1966magnetoelastic} and the interaction energy between $\Hvec_{ext}$ and $\Qvec$, as in \eqref{eq:en6'}. 
Therefore, the total ferronematic energy is given by
\begin{equation} \label{mathcalF}
\begin{aligned}
\mathscr{F}\left(\Qvec, \Mvec\right) =& \int_\Omega\frac{K}{2} | \nabla \Qvec|^2 + \frac{A}{2}|\Qvec|^2 + \frac{C}{4}|\Qvec|^4 + \frac{\kappa}{2} |\nabla \Mvec|^2
+ \frac{\alpha}{2}|\Mvec|^2 + \frac{\beta}{4}|\Mvec|^4 - \frac{\gamma_1\mu}{2}\Qvec\Mvec\cdot\Mvec 
\\\\& 
\;\;\;+ 
\frac{\mu}{2}\left(M_3\right)^2 - \mu\Mvec\cdot \Hvec_{ext}- \frac{\chi_1\mu}{2}\Qvec\Hvec_{ext}\cdot\Hvec_{ext}\;\d x.
\end{aligned}
\end{equation}
\noindent In the above, $K$ is referred to as the positive nematic elastic constant \cite{Gennes} with unit N (newton) and $\kappa$ is the positive rescaled exchange stiffness constant \cite{hubert2008magnetic}, also with unit N. 
The parameters $A$ and $C$ are LdG bulk coefficients typically related to the temperature and material properties, with unit N$\cdot$m$^{-2}$ \cite{newtonmottram}. More precisely, $A$ is often interpreted to be a rescaled temperature and the bulk ferronematic system favours an ordered nematic state with $\Qvec \neq 0$ for $A<0$, and a disordered isotropic phase with $\Qvec = 0$ for $A>0$. The coefficients $\alpha$ and $\beta$ are Landau coefficients that describe the ferromagnetic transition \cite{Mertelj}. In what follows, we assume that $A<0$ and $\alpha<0$, so that the LdG potential for $\Qvec$ and the Landau potential for $\Mvec$ favour an ordered ferronematic phase (with $\Qvec, \Mvec \neq 0$), in the absence of spatial inhomogeneities. Further, $\gamma_1$ is a constant that measures the strength of the nemato-magnetic coupling \cite{Mertelj} and we assume that $\gamma_1 >0$ in what follows, so that it coerces co-alignment between $\nvec$ and $\Mvec$ (see \cite{Bisht2020}). Finally, $\mu$ is the re-scaled magnetic permeability of the vacuum \cite{hubert2008magnetic}. The coefficients $\alpha, \beta, \mu$ have units N$\cdot$m$^{-2}$, 
while $\gamma_1$ is unitless.

We notice that the energy \eqref{mathcalF} reduces to the ferronematic energy of \cite{Bisht2020} when $M_3=0$ and $\Hvec_{ext}=\mathbf{0}$ i.e., for planar magnetization vectors and hence without the considered stray field energy approximation, and in the absence of external magnetic fields.

\subsection{Nondimensionalization}
\label{sec:nondim}
Next, we re-scale the ferronematic energy \eqref{mathcalF} to identify the physically relevant variables. Let $L$ be a characteristic length scale of the domain, $\Omega\subset\mathbb R^2$.  We use the upcoming scalings following \cite{Bisht2020}: $$\mathcal{W'}=\mathcal{W}\frac{C}{|A|^2},~\left(x', y'\right)=\frac{\left(x, y\right)}{L},~\Qvec'=\sqrt\frac{2C}{|A|}\Qvec,~\Mvec'=\sqrt\frac{\beta}{|\alpha|}\Mvec,~\Hvec'_{ext}=\sqrt\frac{\beta}{|\alpha|}\Hvec_{ext},$$
where $\mathcal{W}$ denotes the energy density in \eqref{mathcalF}.
The non-dimensional/re-scaled ferronematic energy then reads (up to a constant)  
\begin{equation}
\begin{aligned}
\mathcal{E}'\left(\Qvec', \Mvec'\right) = \int_{\Omega'}\mathcal{W}'\left(\Qvec', \nabla'\Qvec', \Mvec', \nabla'\Mvec'\right)\;\d x',\label{eq:en12}
\end{aligned}
\end{equation}
where
\begin{eqnarray}
\mathcal{W'}\left(\Qvec', \nabla'\Qvec', \Mvec', \nabla'\Mvec'\right)&=&\frac{l_1}{2}|\nabla' \Qvec'|^2 + \frac{\xi l_2}{2}|\nabla' \Mvec'|^2 + \frac{1}{4}\left(\frac{1}{2}|\Qvec'|^2 - 1\right)^2 \\ &&+ \frac{\xi}{4}\left(|\Mvec'|^2 - 1\right)^2 - \frac{c_1}{2}\Qvec'\Mvec'\cdot\Mvec'+ \frac{c_3}{2}\xi\left(M_3'\right)^2 \\
&&- \frac{c_2}{2}\Qvec'\Hvec_{ext}'\cdot\Hvec_{ext}' - c_3\xi\Mvec'\cdot\Hvec_{ext}'.\nonumber
\end{eqnarray}
Here, $\Omega'$ is the re-scaled two-dimensional domain.
The relevant dimensionless parameters are defined to be
$$l_1=\frac{K}{2|A|L^2},~l_2=\frac{\kappa}{|\alpha| L^2},~\xi=\frac{C}{|A|^2}\frac{|\alpha|^2}{\beta},~c_1=\frac{\gamma_1\mu}{|A|}\sqrt{\frac{C}{2|A|}}\frac{|\alpha|}{\beta},$$
$$c_2=\frac{\chi_1\mu}{|A|}\sqrt{\frac{C}{2|A|}}\frac{|\alpha|}{\beta},~c_3=\frac{\mu}{|\alpha|}.$$
We drop the primes in what follows and all subsequent results are to be understood in terms of the re-scaled variables. Hence, we obtain \eqref{en:2}.

We briefly discuss the physical interpretation of the dimensionless parameters. The re-scaled elastic constants, $l_1$ and $l_2$ are the ratios of the characteristic material-dependent length scales (e.g., $\sqrt{\frac{K}{|A|}}$, $\sqrt{\frac{\kappa}{|\alpha|}}$) and geometric length scales, such as $L$ in this setting. For simplicity, $l_1$ and $l_2$ are sometimes assumed to be equal to $l$, which is regarded as the elastic constant. The non-negative parameter $\xi$ is interpreted as a measure of the relative strengths of the magnetic energy (the Ginzburg-Landau potential for $\Mvec$ and the exchange energy) and the LdG energy (depending on $\Qvec$ and $\nabla \Qvec$). The magnetic energy is dominant for large values of $\xi$ and we expect to see the nematic director profiles being tailored by $\Mvec$ for large $\xi$ as in \cite{Bisht2020}. The parameter $c_1>0$ is simply a re-scaled measure of the nematic-magnetic coupling strength constant $\gamma_1$. The parameter $c_2$ is a measure of the relative strength of the interaction energy \eqref{eq:en6'}, i.e., this interaction energy between the NLC and the external magnetic field becomes increasingly dominant as $c_2$ increases. In addition, we have the parameter, $c_3$, which accounts for the stray field energy and the Zeeman energy. Below, we elaborate on the effects of these parameters with some analytic and numerical results, although far more comprehensive studies are needed for a complete understanding.

\begin{remark}
The energy density in \eqref{eq:en12} is frame-indifferent.
\end{remark}

\section{Existence and uniqueness}\label{sec:4}
In this section, we study existence and uniqueness properties of energy minimizers of the functional $\mathcal{E}$ as introduced in \eqref{en:2}. 
For convenience, we also write 
\begin{eqnarray}
\mathcal{E}\left(\Qvec, \Mvec\right)&=&\int_{\Omega}\frac{l_1}{2}|\nabla \Qvec|^2 + \frac{\xi l_2}{2}|\nabla \Mvec|^2  + f\left(\Qvec, \Mvec\right)
+ \frac{c_3}{2}\xi\left(M_3\right)^2
\nonumber\\\nonumber\\
&& \;\;- \frac{c_2}{2}\Qvec\Hvec_{ext}\cdot\Hvec_{ext} 
- c_3\xi\Mvec\cdot\Hvec_{ext} \,\d x,\nonumber
\end{eqnarray}
where
\begin{eqnarray} \label{def-f}
f\left(\Qvec, \Mvec\right)&=&  \frac{1}{4}\left(\frac{1}{2}|\Qvec|^2 - 1\right)^2 + \frac{\xi}{4}\left(|\Mvec|^2 - 1\right)^2 -  \frac{c_1}{2}\Qvec\Mvec\cdot\Mvec.
\end{eqnarray}
We define the following admissible spaces
\begin{displaymath}
\mathcal{A} = \Big \{ \Qvec \in W^{1,2} \left(\Omega, \mathcal{Q}\right):  \Qvec|_{\partial \Omega} = \Qvec_{bd} \mbox{ in the sense of traces}\Big \},
\end{displaymath}
\begin{displaymath}
\mathcal{S} = \left \{ \Mvec \in W^{1,2} \left( \Omega, \mathbb R^3 \right) : \Mvec|_{\partial \Omega} = \Mvec_{bd} \mbox{ in the sense of traces}\right \},
\end{displaymath}
\begin{displaymath}
\mathcal{A}\times\mathcal{S} = \left \{ \left(\Qvec, \Mvec\right) : \Qvec \in \mathcal{A} \mbox{ and } \Mvec \in \mathcal{S} \right \}
\end{displaymath}
for some given Lipschitz mappings $\Qvec_{bd}: \partial\Omega\rightarrow\mathcal{Q}$ and $\Mvec_{bd}: \partial\Omega\rightarrow\mathbb R^3$, which define our Dirichlet boundary conditions.
We prove the existence of global minimizers of the ferronematic energy functional $\mathcal{E}$ in \eqref{en:2}, in the admissible space $\mathcal{A} \times \mathcal{S}$, using the direct methods in the calculus of variations. Firstly, we check that $\mathcal{E}$ is bounded below by a fixed constant independent of $\Qvec$ and $\Mvec$. To do so, it is enough to show that the coupled terms are bounded below since the other terms are non-negative. 
We set $\mathcal{I}_1= -c_1\Qvec\Mvec\cdot\Mvec$ and observe that
\begin{eqnarray}
\mathcal{I}_1&=&-\frac{c_1}{2}\sum_{i,j=1}^{3}Q_{ij}M_iM_j \nonumber\\
&=&-\frac{c_1}{2}Q_{11}\left(M_1^2 - M_2^2\right) - c_1Q_{12}M_1M_2~~(\mbox{since } Q_{i3}=Q_{3j}=0,Q_{12}=Q_{21}, Q_{22}=-Q_{11})\nonumber\\
& \ge & -\frac{c_1}{4}\left(\epsilon_1|\Qvec|^2 + \frac{1}{\epsilon_1}|\Mvec|^4\right)\label{eq:ex1}
\end{eqnarray}
for some arbitrary constant $\epsilon_1>0$. Similarly, 
\begin{eqnarray}
\mathcal{I}_2 &=&  -\frac{c_2}{2}\Qvec\Hvec_{ext}\cdot\Hvec_{ext} 
\ge -\frac{c_2}{4}\left(\epsilon_2|\Qvec|^2 + \frac{1}{\epsilon_2}|\Hvec_{ext}|^4\right)\label{eq:ex2}
\end{eqnarray}
for some arbitrary $\epsilon_2>0$. Finally, note that \begin{eqnarray}
\mathcal{I}_3 &=& -c_3\xi\Mvec\cdot\Hvec_{ext} 
\ge-c_3\xi\left(\epsilon_3|\Mvec|^2 + \frac{1}{\epsilon_3}|\Hvec_{ext}|^2\right)\label{eq:ex3}
\end{eqnarray}
for some arbitrary $\epsilon_3>0$. 

Combining \eqref{eq:ex1}, \eqref{eq:ex2} and \eqref{eq:ex3}, we obtain 
\begin{eqnarray}
& &f\left(\Qvec, \Mvec\right)-\frac{c_2}{2}\Qvec\Hvec_{ext}\cdot\Hvec_{ext}-c_3\xi\Mvec\cdot\Hvec_{ext}\nonumber\\
&\ge& \frac{1}{4}\left(\frac{1}{2}|\Qvec|^2 - 1\right)^2 + \frac{\xi}{4}\left(|\Mvec|^2 - 1\right)^2 -\frac{c_1}{4}\left(\epsilon_1|\Qvec|^2 + \frac{1}{\epsilon_1}|\Mvec|^4\right) -\frac{c_2}{4}\left(\epsilon_2|\Qvec|^2 + \frac{1}{\epsilon_2}|\Hvec_{ext}|^4\right)\nonumber\\
& &~~~~~-c_3\xi \epsilon_3|\Mvec|^2 -\frac{c_3\xi}{\epsilon_3}|\Hvec_{ext}|^2\nonumber\\
&\ge&\frac{1}{4}\left(\frac{1}{2}|\Qvec|^2-\left(1+c_1\epsilon_1+c_2\epsilon_2\right)\right)^2+\left(\frac{\xi}{4}-\frac{c_1}{4\epsilon_1}\right)\left(|\Mvec|^2 -\frac{\frac{\xi}{4}+\frac{c_3\xi\epsilon_3}{2}}{\frac{\xi}{4}-\frac{c_1}{4\epsilon_1}}\right)^2\nonumber\\
& &~~~~~~~~~-\frac{c_3\xi}{\epsilon_3}|\Hvec_{ext}|^2-\frac{c_2}{2\epsilon_2}|\Hvec_{ext}|^4+g\left(c_1,c_2,\xi,\epsilon_1,\epsilon_2,\epsilon_3\right),\label{eq:ex5}
\end{eqnarray}
where we choose $\epsilon_1>\frac{c_1}{\xi}$ and $g$ is an explicitly computable constant that is independent of $\Qvec$ and $\Mvec$. The lower bound of the energy functional \eqref{en:2} follows immediately due to the estimate \eqref{eq:ex5} together with $\Hvec_{ext}\in C\left(\overline{\Omega}; \mathbb R^3\right)$. 
The existence result in Theorem~\ref{thm:1} below then relies on the following compactness and lower-semicontinuity arguments.
\begin{proposition}\label{prop:1}
Assume that  $\mathcal{A}\times\mathcal{S}\ne\emptyset$ and $\Hvec_{ext}\in C\left(\overline{\Omega}; \mathbb R^3\right)$. Then the following compactness result holds:\\
If $\mathcal{E}\left(\Qvec_k, \Mvec_k\right)\le\Lambda$ for a sequence $\left(\Qvec_k, \Mvec_k\right)\in\mathcal{A}\times\mathcal{S}$ and some $\Lambda\in\mathbb R$ independent of $k$, then $\left(\Qvec_k, \Mvec_k\right)$ has a weakly converging subsequence in $\mathcal{A}\times\mathcal{S}$.
\end{proposition}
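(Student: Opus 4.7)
The plan is to chain together three standard ingredients: the lower bound already derived in \eqref{eq:ex5}, reflexivity of $W^{1,2}$, and weak continuity of linear maps. The key observation is that \eqref{eq:ex5} gives coercivity not just of the gradient part of $\mathcal{E}$ but also of the potential part in an $L^4$ sense, once $\epsilon_1,\epsilon_2,\epsilon_3$ are fixed appropriately (in particular $\epsilon_1>c_1/\xi$ so that the $|\Mvec|^2$-bracket in \eqref{eq:ex5} has a positive leading coefficient).

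First I would use the hypothesis $\mathcal{E}(\Qvec_k,\Mvec_k)\le\Lambda$ together with \eqref{eq:ex5} and the assumption $\Hvec_{ext}\in C(\overline{\Omega};\mathbb{R}^3)$ (which makes the $|\Hvec_{ext}|^2,|\Hvec_{ext}|^4$ integrals finite) to obtain a constant $\Lambda'$ independent of $k$ such that
\begin{equation*}
\int_{\Omega}\frac{l_1}{2}|\nabla\Qvec_k|^2+\frac{\xi l_2}{2}|\nabla\Mvec_k|^2+\frac{1}{4}\Bigl(\tfrac{1}{2}|\Qvec_k|^2-a\Bigr)^2+b\bigl(|\Mvec_k|^2-c\bigr)^2\,\d x\le\Lambda',
\end{equation*}
with $a,b,c$ depending only on the fixed parameters (and $b>0$). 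The gradient terms directly bound $\|\nabla\Qvec_k\|_{L^2}$ and $\|\nabla\Mvec_k\|_{L^2}$. Expanding the two parabolic brackets and using Young's inequality absorbs the negative $|\Qvec_k|^2$ and $|\Mvec_k|^2$ contributions into the corresponding $|\Qvec_k|^4$ and $|\Mvec_k|^4$ terms, yielding uniform bounds on $\|\Qvec_k\|_{L^4(\Omega)}$ and $\|\Mvec_k\|_{L^4(\Omega)}$; since $\Omega$ is bounded these imply $L^2$ bounds. Altogether $\|\Qvec_k\|_{W^{1,2}}+\|\Mvec_k\|_{W^{1,2}}\le C$.

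Next, by reflexivity of $W^{1,2}(\Omega;\mathbb{R}^{3\times 3})$ and $W^{1,2}(\Omega;\mathbb{R}^3)$, extract a (non-relabeled) subsequence with $\Qvec_k\rightharpoonup\Qvec$ and $\Mvec_k\rightharpoonup\Mvec$ weakly. To conclude $(\Qvec,\Mvec)\in\mathcal{A}\times\mathcal{S}$ I would verify two items. The algebraic constraints defining $\mathcal{Q}$ (symmetry, tracelessness, vanishing of the third row and column) are linear and closed, so the bounded linear map sending $\Qvec\mapsto(\Qvec-\Qvec^T,\operatorname{tr}\Qvec,Q_{3i},Q_{i3})$ from $W^{1,2}$ into $L^2$ is weakly continuous, and its vanishing along the sequence passes to the weak limit almost everywhere. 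The boundary conditions are preserved because the trace operator $W^{1,2}(\Omega)\to L^2(\partial\Omega)$ is bounded linear, hence weakly continuous, so $\Qvec|_{\partial\Omega}=\Qvec_{bd}$ and $\Mvec|_{\partial\Omega}=\Mvec_{bd}$ in the sense of traces.

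No step here is genuinely hard; the only point requiring mild care is the passage from the level-set-style inequality \eqref{eq:ex5} to a bona fide $L^4$ (hence $L^2$) bound, and the only ingredient hidden from view is the admissibility of the choice $\epsilon_1>c_1/\xi$, already granted by the discussion preceding the proposition. The rest is a textbook application of the direct method's compactness step.
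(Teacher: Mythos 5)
Your proof is correct, and while it necessarily follows the same overall skeleton as the paper (uniform $W^{1,2}$ bound, reflexivity/Banach--Alaoglu, admissibility of the weak limit), you handle the two technical sub-steps differently. For the $L^2$ bound on the functions themselves, the paper discards the potential terms after invoking the lower bound \eqref{eq:ex5} and instead fixes a reference pair $(\tilde{\Qvec},\tilde{\Mvec})\in\mathcal{A}\times\mathcal{S}$, writes $\Qvec_k-\tilde{\Qvec}\in W^{1,2}_0$, and applies Poincar\'e's inequality; you keep the coercive quartic brackets of \eqref{eq:ex5} and extract uniform $L^4$ (hence, on the bounded $\Omega$, $L^2$) bounds via Young's inequality, which avoids the reference element and in fact yields slightly more information. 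For admissibility of the limit, the paper uses compactness of the trace operator from $W^{1,\frac{3}{2}}$ into $L^2(\partial\Omega)$ together with a.e.\ convergence of the traces, whereas you only use that the trace operator $W^{1,2}(\Omega)\to L^2(\partial\Omega)$ is bounded and linear, hence weak--weak continuous, so the constant boundary data pass to the limit by uniqueness of weak limits (equivalently, the affine set determined by the Dirichlet data is convex and strongly closed, hence weakly closed); this is the more elementary route. You also check explicitly that the pointwise linear constraints defining $\mathcal{Q}$ (symmetry, tracelessness, vanishing third row and column) survive the weak limit, a point the paper leaves implicit. Both arguments are valid: the paper's Poincar\'e step would still work if the potential lacked quartic coercivity, while yours is self-contained given \eqref{eq:ex5} and dispenses with the compact-trace detour.
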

\begin{proof}
Let $\left(\Qvec_k, \Mvec_k\right)_k \subseteq \mathcal{A} \times \mathcal{S}$ be an infimizing sequence for the functional $\mathcal{E}\left(\Qvec, \Mvec\right)$ such that $\mathcal{E}\left(\Qvec_k, \Mvec_k\right)<\Lambda$ for some positive $\Lambda\in\mathbb{R}$ and all $k\in \mathbb{N}$. Then, in particular,
$$ \int_{\Omega} \frac{l_1}{2}|\nabla \Qvec_k|^2 + \frac{l_2\xi}{2}|\nabla \Mvec_k|^2 + \frac{c_3}{2}\xi\left(M_{3k}\right)^2\d x < \Lambda$$
uniformly for any $k\in\mathbb{N}$.
We fix $\left(\Tilde{\Qvec}, \Tilde{\Mvec}\right)\in\mathcal{A}\times\mathcal{S}$. Then $\Qvec_k - \Tilde{\Qvec} \in W^{1,2}_{0}\left(\Omega; \mathbb{R}^{3\times 3} \right)$ and $\Mvec_k - \Tilde{\Mvec}\in W^{1,2}_{0}\left(\Omega; \mathbb R^3\right)$. Thus, Poincar\'e's inequality yields:
\begin{eqnarray}
||\Qvec_k||_{L^2\left(\Omega; \mathbb R^{3\times 3}\right)}&\le&||\Qvec_k - \Tilde{\Qvec}||_{L^2\left(\Omega; \mathbb R^{3\times 3}\right)} + ||\Tilde{\Qvec}||_{L^2\left(\Omega; \mathbb R^{3\times 3}\right)} \le C_p||\nabla \Qvec_k - \nabla \Tilde{\Qvec}||_{L^2\left(\Omega; \mathbb R^{3\times 3}\right)} + \Tilde{C} \nonumber \\ &\le&C_p\left(||\nabla \Qvec_k||_{L^2\left(\Omega; \mathbb R^{3\times 3}\right)} + ||\nabla \Tilde{\Qvec}||_{L^2\left(\Omega; \mathbb R^{3\times 3}\right)}\right) + \Tilde{C}\nonumber\\
&\le& C\nonumber
\end{eqnarray}
for any $k\in\mathbb{N}$. Similarly, we have that $||\Mvec_k||_{L^2\left(\Omega; \mathbb R^3\right)}\le C$ for any $k\in\mathbb{N}$. Here $C>0$ is a generic constant and $C_p>0$ is the Poincar\'e constant.
Hence, the sequence $\left(\Qvec_k, \Mvec_k\right)_k$ is bounded in the reflexive Banach space $W^{1,2}\left(\Omega; \mathbb R^{3\times 3}\right)\times W^{1,2}\left(\Omega; \mathbb R^3\right)$ for any $k\in\mathbb{N}$.
By the Banach-Alaoglu theorem, we can extract a subsequence without relabeling such that
\begin{eqnarray}\label{eq:ex6}
\left(\Qvec_k, \Mvec_k\right)\rightharpoonup \left(\Qvec, \Mvec\right) \mbox{ in } W^{1,2}\left(\Omega; \mathbb R^{3\times 3}\right)\times W^{1,2}\left(\Omega; \mathbb R^3\right) \quad \mbox{ as } k\to \infty
\end{eqnarray} 
for some $(\Qvec,\Mvec) \in W^{1,2}\left(\Omega; \mathbb R^{3\times 3}\right)\times W^{1,2}\left(\Omega; \mathbb R^3\right)$.
To show that the limit pair $\left(\Qvec, \Mvec\right)$ satisfies the prescribed Dirichlet boundary condition, that is 
$\left(\Qvec, \Mvec\right) = \left(\Qvec_{bd}, \Mvec_{bd}\right) \mbox{ on } \partial\Omega$
in the sense of traces, it is enough to note that the trace operator $\gamma: W^{1, \frac{3}{2}}\left(\Omega;~ \cdot~\right)\rightarrow L^2\left(\partial\Omega\right)$ is compact (see e.g., \cite[Theorem 6.1-7(b)]{ciarlet2021mathematical}). By \eqref{eq:ex6}, $\Qvec$ and $\Mvec$ are bounded in $W^{1,2}$ and hence in $W^{1,\frac{3}{2}}$, we obtain,  up to subsequences that  
\begin{alignat}{8}
&\gamma\left(\Qvec_k\right)& &\rightarrow~& &\gamma\left(\Qvec\right)& &\mbox{ in }& &L^2\left(\partial\Omega; \mathbb R^{3\times 3}\right)& &\mbox{ as }& &k\rightarrow\infty&,\label{eq:ex7}\\
&\gamma\left(\Mvec_k\right)& &\rightarrow~& &\gamma\left(\Mvec\right)& &\mbox{ in }& &L^2\left(\partial\Omega; \mathbb R^3\right)& &\mbox{ as }& &k\rightarrow\infty&.\label{eq:ex8}
\end{alignat}
By extracting a pointwise converging sequence, we show that
$$\left(\Qvec, \Mvec\right) = \left(\Qvec_{bd}, \Mvec_{bd}\right) \mbox{ on } \partial\Omega$$
in the sense of traces. Hence the limit pair, $\left(\Qvec, \Mvec\right)\in\mathcal{A}\times\mathcal{S}$, and Proposition~\ref{prop:1} follows.
\end{proof}
Next, we show weak lower semicontinuity of the energy functional $\mathcal{E}$ in \eqref{en:2}.
\begin{proposition}\label{prop:2}
Let $\Hvec_{ext}\in C\left(\overline{\Omega}; \mathbb R^3\right)$. 
Then 
\begin{eqnarray}\label{wls1}
\mathcal{E}\left(\Qvec, \Mvec\right)\le \liminf_{k\rightarrow\infty} \mathcal{E}\left(\Qvec_k, \Mvec_k\right),
\end{eqnarray}when $\left(\Qvec_k, \Mvec_k\right)\rightharpoonup \left(\Qvec, \Mvec\right) \mbox{ in } W^{1,2}\left(\Omega; \mathbb{R}^{3\times 3}\right)\times W^{1,2}\left(\Omega; \mathbb R^3\right) \mbox{ as } k\to\infty.$
\end{proposition}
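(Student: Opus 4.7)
The plan is to split $\mathcal E$ into the two Dirichlet gradient terms and a remainder that depends only pointwise (or linearly) on $(\Qvec,\Mvec)$, and to handle the two parts separately. The gradient contributions $\int_\Omega \frac{l_1}{2}|\nabla\Qvec|^2\,\d x$ and $\int_\Omega \frac{\xi l_2}{2}|\nabla\Mvec|^2\,\d x$ are convex and continuous in the gradient, hence weakly lower semicontinuous on $W^{1,2}$ by the classical Tonelli-type result. The remainder is not convex in $(\Qvec,\Mvec)$, but the two-dimensional compact Sobolev embedding will upgrade weak convergence to strong convergence and reduce the required lower semicontinuity for these terms to plain continuity.

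Concretely, I would first invoke the Rellich--Kondrachov theorem: since $\Omega\subset\mathbb R^2$ is bounded and Lipschitz, $W^{1,2}(\Omega)\hookrightarrow L^p(\Omega)$ is compact for every $1\le p<\infty$. Extracting a subsequence (which suffices by the standard subsequence-of-subsequence principle for $\liminf$), I may assume $\Qvec_k\to\Qvec$ and $\Mvec_k\to\Mvec$ strongly in every $L^p(\Omega)$ and pointwise almost everywhere on $\Omega$. Each non-gradient summand is then shown to converge in $L^1(\Omega)$: the quartic bulk potentials $\tfrac14\bigl(\tfrac12|\Qvec_k|^2-1\bigr)^2$ and $\tfrac{\xi}{4}(|\Mvec_k|^2-1)^2$ converge after expansion and Hölder's inequality using strong $L^4$ convergence; the trilinear coupling $-\tfrac{c_1}{2}\Qvec_k\Mvec_k\cdot\Mvec_k$ converges by strong $L^3$ convergence together with Hölder; the stray-field term $\tfrac{c_3\xi}{2}(M_{3,k})^2$ converges by strong $L^2$ convergence of the third component; and the two external-field contributions are continuous linear functionals of $(\Qvec_k,\Mvec_k)$ with $L^\infty$ coefficients (because $\Hvec_{ext}\in C(\overline\Omega;\mathbb R^3)$), so they pass to the limit even under weak $L^2$ convergence alone.

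Combining the weak lower semicontinuity of the gradient parts with the full convergence of the non-gradient remainder along the chosen subsequence yields $\mathcal E(\Qvec,\Mvec)\le\liminf_{k\to\infty}\mathcal E(\Qvec_k,\Mvec_k)$ along that subsequence; since any subsequence of the original sequence admits a further subsequence with the same weak limit $(\Qvec,\Mvec)$, the inequality \eqref{wls1} follows for the original sequence.

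The only genuinely non-routine point is the sign-indefinite coupling term $-\tfrac{c_1}{2}\Qvec\Mvec\cdot\Mvec$, which is \emph{not} a convex function of $(\Qvec,\Mvec)$, so weak lower semicontinuity cannot be read off from convexity alone; what rescues the argument is precisely the compact Sobolev embedding available in the ambient dimension $\dim\Omega=2$, which turns this term into a continuous, rather than merely lower semicontinuous, functional along the extracted subsequence. The quartic bulk potentials are likewise non-convex in $(\Qvec,\Mvec)$, and the same compactness argument, applied in the strong $L^4$ topology, is what makes them continuous in the limit.
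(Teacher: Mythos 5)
Your proof is correct and follows essentially the same route as the paper: weak lower semicontinuity of the two gradient terms via the $L^2$ norm, combined with Rellich--Kondrachov compactness to handle all lower-order terms, and a subsequence extraction to recover the $\liminf$ for the full sequence. The only (harmless) difference is that the paper treats the non-gradient integrand by pointwise a.e.\ convergence and Fatou's lemma after normalizing it to be non-negative using the coercivity estimate \eqref{eq:ex5}, whereas you establish outright $L^1$ continuity of those terms via strong $L^p$ convergence and H\"older's inequality, which is slightly stronger and avoids the normalization step.
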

\begin{proof}
Set 
\begin{eqnarray}
\mathcal{G}\left(\Qvec,\Mvec\right)
=f\left(\Qvec, \Mvec\right)+ \frac{c_3}{2}\xi\left(M_3\right)^2 - \frac{c_2}{2}\Qvec\Hvec_{ext}\cdot\Hvec_{ext} - c_3\xi\Mvec\cdot\Hvec_{ext}\nonumber
\end{eqnarray} 
and assume without loss of generality that $\mathcal{G}\left(\Qvec,\Mvec\right)$ is non-negative, c.f.\ \eqref{eq:ex5}.
By the lower semicontinuity property of the norm, we have that
\begin{eqnarray}
||\nabla \Qvec||^2_{L^2\left(\Omega \right)} \le  \liminf_{k\rightarrow\infty}||\nabla \Qvec_k||^2_{L^2\left(\Omega\right)},\label{eq:ex9}\\
||\nabla \Mvec||^2_{L^2\left(\Omega\right)}\le  \liminf_{k\rightarrow\infty}||\nabla \Mvec_k||^2_{L^2\left(\Omega\right)}. \label{eq:ex10}
\end{eqnarray}
Using Rellich-Kondra\v{s}ov's embedding (see e.g., \cite[Theorem 6.1-5]{ciarlet2021mathematical}), we get   
\begin{alignat}{5}
&\Qvec_k& &\rightarrow\Qvec &~\mbox{ in }& L^q\left(\Omega;\mathbb R^{3\times 3}\right) &\quad q\in [1, \infty)&,\\
&\Mvec_k& &\rightarrow\Mvec &~\mbox{ in }& L^q\left(\Omega;\mathbb R^3\right) &\quad q\in [1, \infty)&.
\end{alignat}\label{eq:ex12}
Up to a subsequence without relabelling, we have
\begin{alignat}{5}
&\Qvec_k& &\rightarrow \Qvec& \mbox{ a.e. in } \Omega,\label{eq:ex13}\\
&\Mvec_k& &\rightarrow \Mvec& \mbox{ a.e. in } \Omega.\label{eq:ex14}
\end{alignat}
Using the closure property of pointwise convergence and Fatou's lemma \cite{Rindler}, we conclude that
\begin{eqnarray}
\int_{\Omega}\mathcal{G}\left(\Qvec,\Mvec\right)\d x\le\liminf_{k\rightarrow\infty}\int_{\Omega}\mathcal{G}\left(\Qvec_k,\Mvec_k\right)\d x.\label{eq:ex16}
\end{eqnarray}
Equations \eqref{eq:ex9}, \eqref{eq:ex10} and \eqref{eq:ex16} collectively yield the lower semi-continuity \eqref{wls1} of the ferronematic energy \eqref{en:2}. 
\end{proof}
Now, we state and prove the existence theorem.
\begin{theorem}[Existence of energy minimizers]\label{thm:1}
Assume that $\mathcal{A}\times\mathcal{S}\ne\emptyset$ and $\Hvec_{ext}\in C\left(\overline{\Omega}; \mathbb R^3\right)$. Then the energy functional $\mathcal{E}$ given by \eqref{en:2} admits a global minimizer in the set $\mathcal{A}\times\mathcal{S}$.
\end{theorem}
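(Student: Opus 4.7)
The plan is to apply the direct method in the calculus of variations in its most standard form, because the two pieces of heavy lifting — compactness of energy-bounded sequences and weak lower semicontinuity of $\mathcal{E}$ — have already been carried out in Propositions~\ref{prop:1} and \ref{prop:2}, and the coercivity estimate (the long chain culminating in \eqref{eq:ex5}) shows that $\mathcal{E}$ is bounded below on $\mathcal{A}\times\mathcal{S}$ by a constant depending only on the fixed data $\Hvec_{ext}\in C(\overline{\Omega};\mathbb{R}^3)$ and the parameters. So my first step is simply to record that $m:=\inf_{(\Qvec,\Mvec)\in\mathcal{A}\times\mathcal{S}}\mathcal{E}(\Qvec,\Mvec)$ is a finite real number, using $\mathcal{A}\times\mathcal{S}\neq\emptyset$ (to ensure the infimum is not $+\infty$) together with \eqref{eq:ex5} (to ensure it is not $-\infty$).

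Next I will pick a minimizing sequence $(\Qvec_k,\Mvec_k)\in\mathcal{A}\times\mathcal{S}$ with $\mathcal{E}(\Qvec_k,\Mvec_k)\to m$, so that in particular $\mathcal{E}(\Qvec_k,\Mvec_k)\le\Lambda$ uniformly in $k$ for some $\Lambda\in\mathbb{R}$. Proposition~\ref{prop:1} then supplies a (non-relabelled) subsequence and a pair $(\Qvec,\Mvec)\in\mathcal{A}\times\mathcal{S}$ with $(\Qvec_k,\Mvec_k)\rightharpoonup(\Qvec,\Mvec)$ weakly in $W^{1,2}(\Omega;\mathbb{R}^{3\times 3})\times W^{1,2}(\Omega;\mathbb{R}^3)$; crucially, Proposition~\ref{prop:1} already packages in the verification that this weak limit lies in $\mathcal{A}\times\mathcal{S}$, including the preservation of the Dirichlet traces via the compactness of the trace operator in \eqref{eq:ex7}--\eqref{eq:ex8}. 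The pointwise algebraic constraints defining $\mathcal{Q}$ (symmetry, tracelessness, vanishing of the third row and column) are linear and hence are preserved under weak $W^{1,2}$ convergence by testing against characteristic functions, so this step requires no extra argument.

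To conclude, I apply Proposition~\ref{prop:2} along the selected subsequence to get
\begin{equation*}
\mathcal{E}(\Qvec,\Mvec)\;\le\;\liminf_{k\to\infty}\mathcal{E}(\Qvec_k,\Mvec_k)\;=\;m,
\end{equation*}
while the admissibility $(\Qvec,\Mvec)\in\mathcal{A}\times\mathcal{S}$ gives the opposite inequality $\mathcal{E}(\Qvec,\Mvec)\ge m$. Hence $\mathcal{E}(\Qvec,\Mvec)=m$, so $(\Qvec,\Mvec)$ is a global minimizer and the theorem follows.

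I do not anticipate any real obstacle: all the work has been front-loaded into Propositions~\ref{prop:1} and \ref{prop:2}, and the remaining assembly is the textbook direct-method skeleton. The only point worth being slightly careful about is confirming that the pointwise constraints built into $\mathcal{Q}$ (and hence into $\mathcal{A}$) survive the passage to the weak limit, but since these are linear identities on the entries of $\Qvec$, this is immediate from weak convergence in $L^2$.
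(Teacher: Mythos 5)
Your proposal is correct and follows essentially the same route as the paper: the paper's proof is a one-line invocation of the direct method, citing Propositions~\ref{prop:1} and \ref{prop:2}, and your write-up simply makes explicit the standard assembly (finite infimum via \eqref{eq:ex5}, minimizing sequence, compactness, lower semicontinuity) that the paper leaves implicit.
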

\begin{proof}
Using Propositions~\ref{prop:1} and \ref{prop:2}, we prove Theorem~\ref{thm:1} by the direct method in the calculus of variations (see e.g., \cite[Theorem 2.1]{Rindler}).
\end{proof}
Next, we show that the minimizer is unique for sufficiently large $l_1$ and $l_2$. 
The Euler-Lagrange equations associated with the energy functional $\mathcal{E}$ given by \eqref{en:2}  in the strong form, correspond to a vanishing $L^2$-gradient of $\mathcal{E}\left(\Qvec, \Mvec\right)$, i.e., $\nabla_{L^2}\mathcal{E}\left(\Qvec, \Mvec\right)=0$. The $L^2$-gradient of the functional $\mathcal{E}\left(\Qvec, \Mvec\right)$, $\nabla_{L^2}\mathcal{E}\left(\Qvec, \Mvec\right)$ is obtained by the Riesz representative of the Fr\'echet derivative of $\mathcal{E}\left(\Qvec, \Mvec\right)$ with respect to the given inner product. A standard calculation yields 
\begin{subequations}\label{EL}
\begin{align}
\label{EL:1}
2l_1\Delta Q_{11} &=  Q_{11}\left(Q_{11}^2 + Q_{12}^2 -1\right) - \frac{c_1}{2}\left(M_1^2 - M_2^2\right) - \frac{c_2}{2}\left(H_1^2 - H_2^2\right),\\
\label{EL:2}
2l_1\Delta Q_{12} &=  Q_{12}\left(Q_{11}^2 + Q_{12}^2 - 1\right) - c_1M_1M_2 - c_2H_1H_2,\\
\label{EL:3}
l_2\xi\Delta M_1 &= \xi M_1\left(M_1^2 + M_2^2 + M_3^2 - 1\right) - c_1\left(Q_{11}M_1 + Q_{12}M_2\right) - c_3\xi H_1,\\
\label{EL:4}
l_2\xi\Delta M_2 &= \xi M_2\left(M_1^2 + M_2^2 + M_3^2 - 1\right) - c_1\left(Q_{12}M_1 - Q_{11}M_2\right) - c_3\xi H_2,\\
\label{EL:5}
l_2\xi\Delta M_3 &= \xi M_3\left(M_1^2 + M_2^2 + M_3^2 - 1\right) - c_3\xi \left(H_3-M_3\right).
\end{align}
\end{subequations}
As an aside we mention that the Euler-Lagrange equations are the stationary case of the gradient flow equations considered in  \eqref{eq:26} below.

In the following, we derive local $L^{\infty}$-bounds for $H^1$ weak solutions of the Euler-Lagrange equations \eqref{EL:1}--\eqref{EL:5}. It is worth mentioning that we work with a bounded Lipschitz domain $\ \Omega\subset\mathbb R^2$, which only technically allows us to obtain a local bound for this elliptic system of PDEs.
\begin{lemma}\label{lem1}
Let $\left(\Qvec, \Mvec\right)\in\mathcal{A}\times\mathcal{S}$ be an $H^1$ weak solution of the system of PDEs \eqref{EL:1}--\eqref{EL:5}, where $\Hvec_{ext}\in C\left(\overline{\Omega}; \mathbb R^3\right)$ with $\sup_{\bar{\Omega}}|\Hvec_{ext}|\le k_1$. Then the weak solution $\left(\Qvec, \Mvec\right)$ has an interior local bound in terms of the positive parameters, $c_1, c_2, c_3,$ $k_1$ and $\xi$, i.e., for any compact set $K\subset\Omega$, there exists $\sigma = \sigma\left(c_1, c_2, c_3, \xi, k_1\right)$ such that 
\begin{eqnarray} 
\begin{aligned}
|Q_{11}(z)| &\leq \sigma~~~~~~~~~~~~~~~~~~~~~\forall z\in K,\label{eq:Qmax}\\
|Q_{12}(z)| &\leq \sigma~~~~~~~~~~~~~~~~~~~~~\forall z\in K,\\
|\Mvec(z)|^2 &\leq A c_1 \frac{\sigma}{\xi} + Bc_3k_1 + 1~~\forall z\in K, \label{eq:Mmax}
\end{aligned}
\end{eqnarray}
for some positive constants $A$ and $B$ independent of $c_1, c_2, c_3, \xi, l_1, l_2, k_1$.
\end{lemma}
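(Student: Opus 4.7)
The plan is to pass from the full system \eqref{EL:1}--\eqref{EL:5} to two scalar subsolution inequalities for the modulus-squared quantities $u := Q_{11}^2 + Q_{12}^2$ and $v := |\Mvec|^2$, and then run a coupled maximum-principle bootstrap in which an a priori bound on $u$ controls the cross-coupling in the inequality for $v$ and conversely.

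First I would derive the two scalar inequalities. Multiplying \eqref{EL:1} by $Q_{11}$ and \eqref{EL:2} by $Q_{12}$, summing, and using $Q_{ij}\Delta Q_{ij} = \tfrac12\Delta(Q_{ij}^2) - |\nabla Q_{ij}|^2$, the non-negative gradient squared terms are discarded. The coupling $\Qvec\Mvec\cdot\Mvec = Q_{11}(M_1^2 - M_2^2) + 2 Q_{12} M_1 M_2$ is controlled via Cauchy--Schwarz together with the identity $(M_1^2 - M_2^2)^2 + 4 M_1^2 M_2^2 = (M_1^2 + M_2^2)^2$, which gives the bound $\sqrt{u}\,(M_1^2 + M_2^2) \le \sqrt{u}\,v$; analogously, $|\Qvec\Hvec_{ext}\cdot\Hvec_{ext}| \le \sqrt{u}\, k_1^2$. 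The outcome is
\begin{equation*}
l_1\,\Delta u \;\ge\; u(u-1) - \tfrac{c_1}{2}\sqrt{u}\,v - \tfrac{c_2}{2}\sqrt{u}\,k_1^2 .
\end{equation*}
Performing the analogous manipulation on \eqref{EL:3}--\eqref{EL:5} (multiply by $M_1,M_2,M_3$ respectively and sum), the stray-field contribution $c_3 \xi M_3^2$ has the favourable sign and can be discarded, while the Zeeman term is controlled by $|\Hvec_{ext}\cdot\Mvec| \le k_1\sqrt{v}$, producing
\begin{equation*}
\tfrac{l_2\xi}{2}\,\Delta v \;\ge\; \xi v(v-1) - c_1 \sqrt{u}\,v - c_3\xi k_1 \sqrt{v}.
\end{equation*}

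The second step is the coupled maximum-principle bootstrap. At an interior maximum of $v$ with value at least $1$ (otherwise the $\Mvec$-bound is trivial), $\Delta v \le 0$; dividing the second inequality by $v$ and using $\sqrt{v}/v \le 1$ yields the linear-in-$\sqrt{u}$ estimate
\begin{equation*}
v \;\le\; 1 + \frac{c_1}{\xi}\sqrt{u} + c_3 k_1 .
\end{equation*}
Substituting this into the $u$ inequality at an interior maximum of $u$ and dividing through by $\sqrt{u}$ (assuming $u \ge 1$) gives
\begin{equation*}
\sqrt{u}\,(u-1) \;\le\; \tfrac{c_1}{2}\!\left(1 + \tfrac{c_1}{\xi}\sqrt{u} + c_3 k_1\right) + \tfrac{c_2}{2} k_1^2 ,
\end{equation*}
whose left-hand side grows like $u^{3/2}$ for large $u$ while the right-hand side is at most linear in $\sqrt{u}$. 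Therefore $\sqrt{u}$ is pinned to an explicit threshold $\sigma = \sigma(c_1, c_2, c_3, \xi, k_1)$; since $|Q_{11}|,\,|Q_{12}| \le \sqrt{u} \le \sigma$ on the region under consideration, the first two bounds of the lemma follow, and plugging $\sqrt{u} \le \sigma$ back into the linear estimate for $v$ yields \eqref{eq:Mmax} with $A = B = 1$.

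The main obstacle I expect is making the pointwise maximum-principle step rigorous for merely $H^1$ weak solutions on a Lipschitz domain; this is precisely why the conclusion is an interior local bound on any compact $K \subset \Omega$. I would handle this by an interior regularity bootstrap: in two space dimensions Sobolev embedding places $\Qvec, \Mvec$ in $L^q_{\mathrm{loc}}(\Omega)$ for every finite $q$, so the right-hand sides of \eqref{EL:1}--\eqref{EL:5} lie in $L^q_{\mathrm{loc}}$; iterating interior $L^p$-theory lifts the solutions to $W^{2,p}_{\mathrm{loc}}(\Omega)$ and hence to $C(\Omega)$, on which the scalar subsolution inequalities above are classical on any compact $K \subset \Omega$. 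The superlinear coercive terms $u(u-1)$ and $\xi v(v-1)$ then prevent blow-up on $K$ and deliver the claimed parameter dependence via a Keller--Osserman / Moser-type comparison, with a cleaner alternative route being to test the subsolution inequalities directly with $(u-k)^+\phi^2$ and $(v-k)^+\phi^2$ for an interior cutoff $\phi\in C_c^\infty(\Omega)$ and close the iteration directly.
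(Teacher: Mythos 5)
Your proposal is correct and follows essentially the same route as the paper: an interior elliptic-regularity bootstrap to justify the pointwise argument, subsolution inequalities for $Q_{11}^2+Q_{12}^2$ and $|\Mvec|^2$ obtained by multiplying \eqref{EL:1}--\eqref{EL:5} by the components and discarding the gradient and $c_3\xi M_3^2$ terms, then the coupled maximum-principle step in which the $\Mvec$-bound is expressed through $\max|\Qvec|$ and fed back to yield a cubic inequality whose largest positive root defines $\sigma$. The only differences are cosmetic (the paper uses polar substitutions $Q_{11}=\tau\cos\theta$, $M_1=\gamma\cos\phi$ and splits cases on $\gamma+|M_3|\lessgtr 1$ rather than on $|\Mvec|^2\lessgtr 1$, producing slightly different constants $A,B$), so your argument matches the paper's proof in substance.
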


\begin{proof}
By the Sobolev embedding $W^{1,2}\left(\Omega\right)\hookrightarrow L^q\left(\Omega\right)~\left(1\leq q<\infty\right)$ (see e.g., \cite[Theorem 6.1-3]{ciarlet2021mathematical}), $\left(\Qvec, \Mvec\right)\in L^q\left(\Omega; \mathbb R^{3\times 3}\right)\times L^q\left(\Omega; \mathbb R^3\right)$ for $1\leq q<\infty$. Applying the elliptic regularity theorem (c.f.\ \cite{evans2022partial}, sec. 6.3.1), we obtain that $\left(\Qvec, \Mvec\right)\in W^{2,2}_{loc}\left(\Omega; \mathbb R^{3\times 3}\right)\times W^{2,2}_{loc}\left(\Omega; \mathbb R^{3}\right)$. Then by the Sobolev embedding $W^{2,2}\left(\Omega\right)\hookrightarrow C^{0}\left(\overline{\Omega}\right)$ \cite[Theorem 6.1-3]{ciarlet2021mathematical} and continuing boot-strapping, we have $\left(\Qvec, \Mvec\right)\in C^{k}_{loc}\left(\Omega; \mathbb R^{3\times 3}\right)\times C^{k}_{loc}\left(\Omega; \mathbb R^{3}\right)$ for $k=0, 1, 2,\hdots$. Thus, we can assume that $|\Qvec|=\sqrt{2\left(Q_{11}^2+Q_{12}^2\right)}$ and $|\Mvec|=\sqrt{M_1^2+M_2^2+M_3^2}$ attain their maxima on a compact set $K\subset\Omega$. Suppose that $|\Qvec|$ attains its maximum at $z_1\in K$ and $|\Mvec|$ attains its maximum at $z_2\in K$. Then the symmetric Hessian $D^2\left(\frac{1}{2}|\Qvec|^2\right)$ is negative semi-definite at $z_1$, and the symmetric Hessian $D^2\left(\frac{1}{2}|\Mvec|^2\right)$ is negative semi-definite at $z_2$. Consequently, we have
$$\Delta \left(\frac{1}{2}|\Qvec|^2\right)\left(z_1\right)\le 0 \mbox{ and }\Delta \left(\frac{1}{2}|\Mvec|^2\right)(z_2)\le 0.$$
To proceed, we multiply \eqref{EL:1} by $Q_{11}$ and \eqref{EL:2} by $Q_{12}$, and add the resulting two equations. We then make use of the following identity $
\Delta\left(\frac{1}{2}|\Qvec|^2\right)=2Q_{11}\Delta Q_{11} + 2Q_{12}\Delta Q_{12}+|\nabla\Qvec|^2\le 0,$
and get
\begin{eqnarray}\label{max:28}
\bigg\{\left(\frac{1}{2}|\Qvec|^2-1\right)\frac{|\Qvec|^2}{2}-\frac{c_1}{2}Q_{11}(M_1^2 - M_2^2) - c_1Q_{12}M_1M_2\nonumber\\-\frac{c_2}{2}Q_{11}\left(H_1^2-H_2^2\right)-c_2Q_{12}H_1H_2\bigg\}\bigg|_{z=z_1}\le 0.
\end{eqnarray}
Similarly, we obtain
\begin{eqnarray}\label{max:29}
\bigg\{\xi\left(|\Mvec|^2-1\right)|\Mvec|^2-c_1M_1\left(Q_{11}M_1+Q_{12}M_2\right)-c_1M_2\left(Q_{12}M_1-Q_{11}M_2\right)\nonumber\\
-c_3\xi\left(H_1M_1+H_2M_2+H_3M_3\right)+c_3\xi M_3^2\bigg\}\bigg|_{z=z_2}\le 0.
\end{eqnarray}
Using the substitutions \begin{eqnarray}
Q_{11}&=\tau \cos\left(\theta\right), Q_{12}=\tau \sin\left(\theta\right),\nonumber\\
M_1&=\gamma \cos\left(\phi\right), M_2=\gamma \sin\left(\phi\right)
\end{eqnarray}
for arbitrary $\theta, \phi\in [0, 2\pi)$, where $\sqrt{2}\tau=|\Qvec|>0$ and $\gamma=\sqrt{M_1^2+M_2^2}>0$ in \eqref{max:28}, it follows that  
\begin{eqnarray} \label{eq:max2}
0 &\geq& \bigg\{\left(\frac{1}{2}|\Qvec|^2-1\right)\frac{|\Qvec|^2}{2}-\frac{c_1}{2}Q_{11}(M_1^2 - M_2^2) -c_1Q_{12}M_1M_2\nonumber\\ &&~~~~~~~ -\frac{c_2}{2}Q_{11}\left(H_1^2-H_2^2\right)-c_2Q_{12}H_1H_2\bigg\}\bigg|_{z=z_1}\\
&\geq & \bigg\{\tau^2(\tau^2 -1 ) - \frac{c_1}{2} \tau \gamma^2 - c_2 \tau k_1^2\bigg\} \bigg|_{z=z_1}. \nonumber
\end{eqnarray}
Similarly, we deduce from \eqref{max:29} that
\begin{eqnarray}\label{bd-M}
0 &\geq& \bigg\{\xi\left(|\Mvec|^2-1\right)|\Mvec|^2-c_1M_1\left(Q_{11}M_1+Q_{12}M_2\right)-c_1M_2\left(Q_{12}M_1-Q_{11}M_2\right)\nonumber\\
&&~~~~~~~~~ -c_3\xi\left(H_1M_1+H_2M_2+H_3M_3\right)+c_3\xi M_3^2\bigg\}\bigg|_{z=z_2}\nonumber\\
&\geq& \bigg\{\xi(\gamma^2 + M_3^2)\left(\gamma^2 + M_3^2 - 1 \right) - c_1\tau\gamma^2 - 2\xi c_3 k_1 \left(\gamma + |M_3| \right)\bigg\}\bigg|_{z=z_2}.\nonumber\\
\end{eqnarray}
Now, we deal with the following two possible cases in the estimate \eqref{bd-M}.\newline
\textbf{Case: I.}
If
$\gamma(z_2) + |M_3|(z_2)> 1$, it implies straightaway that
\[
\gamma(z_2) + |M_3|(z_2) < 2\left(\gamma^2 + M_3^2 \right)(z_2),
\]
and \eqref{bd-M} then implies that
\begin{eqnarray}
\bigg\{\xi\left(\gamma^2 + M_3^2 - 1 \right) - c_1\tau - 4\xi c_3k_1\bigg\}\bigg|_{z=z_2}\le 0.\label{max1'}
\end{eqnarray}
Using the fact that $\left(\gamma^2 + M_3^2\right)(z)\leq \left(\gamma^2 + M_3^2\right)(z_2)$ for any $z\in K$ and $\tau(z_2) \leq \tau(z_1)$ in \eqref{max1'}, we have
\begin{equation}
\label{eq:max1}
\left(\gamma^2 + M_3^2\right)(z) < \frac{c_1\tau(z_1)}{\xi} + 4 c_3k_1 + 1 \qquad \forall z \in K.
\end{equation}
Substituting \eqref{eq:max1} in \eqref{eq:max2}, we find that
\begin{equation} \label{eq:max3}
    g\left(\tau(z_1)\right) := \tau(\tau^2 -1 ) - c_2 k_1^2 - \frac{c_1}{2} \left(\frac{c_1\tau}{\xi} + 4 c_3k_1 + 1 \right) \bigg|_{z=z_1} \le 0.
\end{equation}This is a cubic polynomial in $\tau$. By Descartes' rule, the cubic polynomial $g\left(\tau\right)$ has at least one positive root. Let $\sigma$ denote the largest positive root of $g$. Consequently,
\begin{eqnarray} \label{sigma-sqrt}
\sigma > \sqrt{ 1 + \frac{c_1^2}{2\xi}}.
\end{eqnarray}
One can check that $g\left(\tau\right)$ is increasing in $[\sigma, \infty)$ and hence $g\left(\tau\left(z_1\right)\right) >0$ if $\tau(z_1) > \sigma$. Then \eqref{eq:max3} yields
\begin{equation} \label{eq:max4}
\tau(z)\leq\tau\left(z_1\right) \leq \sigma( c_1, c_2, c_3, k_1, \xi) \qquad \forall z \in K.
\end{equation}
Substituting \eqref{eq:max4} into \eqref{eq:max1}, as a result, we deduce the interior upper bound for $|\Mvec|^2 = \gamma^2 + M_3^2$ as stated in \eqref{eq:Mmax} for Case I.\newline\\
\textbf{Case: II.} If $\gamma(z_2) + |M_3|(z_2) \leq 1$, then it is immediate that $|\Mvec|^2 \leq 1$ for all $z \in K$. Further, by an argument as in Case I, we obtain
\begin{eqnarray}
\begin{aligned}
\tau &\leq \sigma^{\prime}\left(c_1, c_2, k_1\right) \qquad \forall z \in K
\end{aligned}
\end{eqnarray}
for some positive $\sigma^{\prime}$. 
\end{proof}

Next, using these local $L^\infty$-bounds for any weak solutions of the system of equations \eqref{EL}, we 
prove the uniqueness of global minimizers of the energy functional \eqref{eq:en12} for sufficiently large $l_1$ and $l_2$.
\begin{theorem}[Uniqueness of minimizer]\label{thm:2}
Let the parameters $c_1,c_2,c_3, \xi>0$ be fixed and $\Hvec_{ext}\in C\left(\overline{\Omega}; \mathbb R^3\right)$  with $\sup_{\bar{\Omega}}|\Hvec_{ext}|\le k_1$. For sufficiently large $l_1$ and $l_2$, there exists a unique minimizer of the ferronematic energy functional $\mathcal{E}$, defined in \eqref{en:2}, in the admissible space $\mathcal{A}\times\mathcal{S}$. 
\end{theorem}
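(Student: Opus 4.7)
The plan is to argue by contradiction using strict convexity of $\mathcal{E}$ along the line segment between any two distinct minimizers. Suppose $(\Qvec_1,\Mvec_1)$ and $(\Qvec_2,\Mvec_2)$ are two global minimizers in $\mathcal{A}\times\mathcal{S}$. Both are critical points, hence weak solutions of the Euler--Lagrange system \eqref{EL}. The bootstrap argument already invoked in the proof of Lemma~\ref{lem1} (Sobolev embedding of $W^{1,2}$ into $L^q$ in two dimensions, elliptic regularity, then embedding into $C^0$) shows that each solution is smooth in the interior. Combined with the Lipschitz boundary data and a standard comparison of interior and boundary maxima, Lemma~\ref{lem1} then provides a uniform $L^\infty$-bound $|\Qvec_i|,|\Mvec_i|\le M_0$ on $\overline\Omega$ for $i=1,2$, where $M_0$ depends only on $c_1,c_2,c_3,\xi,k_1$ and on the prescribed boundary data, but not on $l_1,l_2$.

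Next I would consider the affine segment $(\Qvec_t,\Mvec_t):=(1-t)(\Qvec_1,\Mvec_1)+t(\Qvec_2,\Mvec_2)$, which lies in $\mathcal{A}\times\mathcal{S}$ and inherits the same bound $M_0$, and study $\varphi(t):=\mathcal{E}(\Qvec_t,\Mvec_t)$. Writing $\tilde\Qvec:=\Qvec_1-\Qvec_2$ and $\tilde\Mvec:=\Mvec_1-\Mvec_2$, the terms linear in $(\Qvec,\Mvec)$, namely the external-field contributions $\tfrac{c_2}{2}\Qvec\Hvec_{ext}\cdot\Hvec_{ext}$ and $c_3\xi\,\Mvec\cdot\Hvec_{ext}$, do not enter $\varphi''$, so a direct differentiation gives
\[
\varphi''(t)=\int_\Omega\bigl(l_1|\nabla\tilde\Qvec|^2+\xi l_2|\nabla\tilde\Mvec|^2+c_3\xi\,\tilde M_3^{2}\bigr)\d x+\int_\Omega D^2f(\Qvec_t,\Mvec_t)\bigl[(\tilde\Qvec,\tilde\Mvec),(\tilde\Qvec,\tilde\Mvec)\bigr]\d x,
\]
with $f$ as in \eqref{def-f}. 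Since $f$ is polynomial of total degree four in $(\Qvec,\Mvec)$, its Hessian is polynomial of degree at most two, and the $L^\infty$-bound on $(\Qvec_t,\Mvec_t)$ produces a constant $\Lambda=\Lambda(c_1,\xi,M_0)$ with $|D^2f(\Qvec_t,\Mvec_t)|\le\Lambda$ pointwise.

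Since $\tilde\Qvec$ and $\tilde\Mvec$ vanish on $\partial\Omega$, Poincar\'e's inequality yields $\|\tilde\Qvec\|_{L^2}^2\le C_p\|\nabla\tilde\Qvec\|_{L^2}^2$ and analogously for $\tilde\Mvec$, so
\[
\varphi''(t)\ge (l_1-\Lambda C_p)\|\nabla\tilde\Qvec\|_{L^2}^2+(\xi l_2-\Lambda C_p)\|\nabla\tilde\Mvec\|_{L^2}^2.
\]
Choosing $l_1$ and $l_2$ large enough that both coefficients are strictly positive gives $\varphi''(t)\ge c\bigl(\|\nabla\tilde\Qvec\|_{L^2}^2+\|\nabla\tilde\Mvec\|_{L^2}^2\bigr)$ for some $c>0$. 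If the two minimizers differ then $(\tilde\Qvec,\tilde\Mvec)\not\equiv 0$ and, by Poincar\'e, at least one gradient norm is positive, so $\varphi$ is strictly convex on $[0,1]$. This contradicts $\varphi(0)=\varphi(1)=\inf\mathcal{E}\le\varphi(1/2)$, forcing $(\Qvec_1,\Mvec_1)=(\Qvec_2,\Mvec_2)$.

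The main obstacle is ensuring that the constant $\Lambda$ controlling the Hessian is independent of $l_1,l_2$: Lemma~\ref{lem1} yields only an \emph{interior} local bound, valid at points where the moduli attain their maxima, so one must combine the interior computation with the elementary estimates $|\Qvec_i|\le\|\Qvec_{bd}\|_{L^\infty(\partial\Omega)}$ and $|\Mvec_i|\le\|\Mvec_{bd}\|_{L^\infty(\partial\Omega)}$ that apply when the modulus is attained on the boundary, in order to promote the bound to a global one independent of $l_1,l_2$. A secondary, more routine, point is justifying that weak minimizers solve \eqref{EL} and enjoy the regularity needed for the pointwise maximum-principle argument, which follows from the polynomial growth of the integrand and the 2D Sobolev embeddings.
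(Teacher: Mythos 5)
Your proposal is correct and follows essentially the same route as the paper: a contradiction argument via strict convexity of $\mathcal{E}$ along the segment joining two putative minimizers, with the non-convex potential $f$ controlled through the $L^\infty$-bounds of Lemma~\ref{lem1} (applied to the minimizers as weak solutions of \eqref{EL}) and absorbed into the gradient terms by Poincar\'e's inequality for sufficiently large $l_1,l_2$. The only differences are technical rather than conceptual: you compute the second variation $\varphi''(t)$ and bound $D^2f$ directly, whereas the paper establishes the strict midpoint inequality via a Taylor/Lipschitz estimate of $f$ split into two cases, and you are somewhat more explicit about upgrading the interior $L^\infty$-bound to a global one near $\partial\Omega$.
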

\begin{proof}
On the contrary, we assume that $\left(\Qvec, \Mvec\right)$ and $\left(\overline{\Qvec}, \overline{\Mvec}\right)$ $\in$ $\mathcal{A}\times\mathcal{S}$ are two distinct minimizers of $\mathcal{E}$. By an elementary calculation and the definition of $f$ in \eqref{def-f}, we obtain
\begin{eqnarray}
& &\mathcal{E}\left(\frac{\Qvec+\overline{\Qvec}}{2}, \frac{\Mvec+\overline{\Mvec}}{2}\right)\nonumber\\
&=&\int_{\Omega}\frac{l_1}{2}\bigg|\nabla\left(\frac{\Qvec+\overline{\Qvec}}{2}\right)\bigg|^2 + \frac{\xi l_2}{2}\bigg|\nabla\left(\frac{\Mvec+\overline{\Mvec}}{2}\right)\bigg|^2 + f\left(\frac{\Qvec+\overline{\Qvec}}{2}, \frac{\Mvec+\overline{\Mvec}}{2}\right)+\frac{c_3}{2}\xi\left(\frac{M_3+\overline{M_3}}{2}\right)^2\nonumber\\
& &- \frac{c_2}{2}\left(\frac{\Qvec+\overline{\Qvec}}{2}\right)\Hvec_{ext}\cdot\Hvec_{ext} 
- c_3\xi\left(\frac{\Mvec+\overline{\Mvec}}{2}\right)\cdot\Hvec_{ext}\;\d x\nonumber\\
&=&\frac{1}{2}\mathcal{E}\left(\Qvec, \Mvec\right) + \frac{1}{2}\mathcal{E}\left(\overline{\Qvec}, \overline{\Mvec}\right) + \int_{\Omega}f\left(\frac{\Qvec+\overline{\Qvec}}{2}, \frac{\Mvec+\overline{\Mvec}}{2}\right)-\frac{1}{2}\left(f\left(\Qvec, \Mvec\right) + f\left(\overline{\Qvec}, \overline{\Mvec}\right)\right)\nonumber\\
& &- \frac{l_1}{8}\left(\nabla\Qvec-\nabla\overline{\Qvec}\right)^2-\frac{l_2\xi}{8}\left(\nabla\Mvec-\nabla\overline{\Mvec}\right)^2 - \frac{c_3\xi}{8}\left(M_3-\overline{M_3}\right)^2\d x\nonumber\\
&\le&\frac{1}{2}\left(\mathcal{E}\left(\Qvec, \Mvec\right) + \mathcal{E}\left(\overline{\Qvec}, \overline{\Mvec}\right)\right)+\int_{\Omega}f\left(\frac{\Qvec+\overline{\Qvec}}{2}, \frac{\Mvec+\overline{\Mvec}}{2}\right)-\frac{1}{2}\left(f\left(\Qvec, \Mvec\right) + f\left(\overline{\Qvec}, \overline{\Mvec}\right)\right)\d x\nonumber\\
& &- \frac{l_1}{8c_p}||\Qvec-\overline{\Qvec}||^2_{L^2\left(\Omega; \mathbb R^{3\times 3}\right)} -\frac{l_2\xi}{8c_p}||\Mvec-\overline{\Mvec}||^2_{L^2\left(\Omega; \mathbb R^3\right)} - \frac{c_3\xi}{8}||M_3-\bar{M_3}||^2_{L^2\left(\Omega; \mathbb R\right)},\nonumber\\ \label{eq:un1}
\end{eqnarray}
where we have used the Poincar\'e inequality 
with constant $c_p$ (independent of $\Omega$) in the last line. Since $\Qvec-\overline{\Qvec}, \Mvec-\overline{\Mvec}\in \overline{C^{\infty}_{c}\left(\Omega\right)}$, there exists a compact set $K\subset\Omega$ such that
\begin{eqnarray}
\begin{aligned}
\Qvec-\overline{\Qvec}=0 \mbox{ a.e. in } \Omega\setminus K,\label{eq:tay}\\
\Mvec-\overline{\Mvec}=0 \mbox{ a.e. in } \Omega\setminus K.
\end{aligned}
\end{eqnarray}
Moreover, $f\left(\Qvec, \Mvec\right)$ is $C^{\infty}$ w.r.t $Q_{ij}, M_j$ and we deal with the following two scenarios.\newline\\
\textbf{Case I.} Assume that $|\left(\Qvec, \Mvec\right)-\left(\overline{\Qvec}, \overline{\Mvec}\right)|\geq 1$. We know that $f$ is locally Lipschitz. As a consequence, we have
\begin{eqnarray}
& &f\left(\frac{\Qvec+\overline{\Qvec}}{2}, \frac{\Mvec+\overline{\Mvec}}{2}\right)-\frac{1}{2}\left(f\left(\Qvec, \Mvec\right) + f\left(\overline{\Qvec}, \overline{\Mvec}\right)\right)\nonumber\\
&\le&\lambda|\left(\Qvec, \Mvec\right)-\left(\overline{\Qvec}, \overline{\Mvec}\right)|^2,\label{eq:case1}
\end{eqnarray}
where $\lambda$ is the Lipschitz constant and $\lambda=\max_{\left(\Qvec, \Mvec\right)}|f'\left(\Qvec, \Mvec\right)|$. Applying Lemma~\ref{lem1}, the first partial derivatives of $f$ over the compact set $K$ can be estimated as 
\begin{eqnarray}
\Big|\dfrac{\partial f}{\partial Q_{1j}}\Big|&\le&|Q_{ij}|\left|\tfrac12|\Qvec|^2-1\right|+c_1|\Mvec|^2,\nonumber\\
&\le& \sigma^3 + \sigma + c_1\left(A c_1 \frac{\sigma}{\xi} + Bc_3k_1 + 1\right):=a_1,\nonumber\\
\label{eq:first_de1}\\
\Big|\dfrac{\partial f}{\partial M_{i}}\Big|&\le&\xi|M_i|\left||\Mvec|^2-1\right|+2c_1\left(\tfrac12|\Qvec|^2+|\Mvec|^2\right)\nonumber\\
&\le&\xi\sqrt{\left(A c_1 \frac{\sigma}{\xi} + Bc_3k_1 + 1\right)}\left(A c_1 \frac{\sigma}{\xi} + Bc_3k_1 + 2\right)+2c_1\left(\sigma^2+ A c_1 \frac{\sigma}{\xi} + Bc_3k_1  + 1\right)\nonumber\\
&:=&a_2.\nonumber\\
\label{eq:first_de2}
\end{eqnarray}
\textbf{Case II.} Assume that $|\left(\Qvec, \Mvec\right)-\left(\overline{\Qvec}, \overline{\Mvec}\right)| < 1$.\newline
By Taylor's expansion followed by an elementary calculation, we obtain that
\begin{eqnarray}\label{eq:un2'}
\lefteqn{
f\left(\frac{\Qvec+\overline{\Qvec}}{2}, \frac{\Mvec+\overline{\Mvec}}{2}\right)-\frac{1}{2}\left(f\left(\Qvec, \Mvec\right) + f\left(\overline{\Qvec}, \overline{\Mvec}\right)\right)}\nonumber\\
&\le& \frac{1}{4}\bigg|f^{''}\left(\frac{\Qvec+\overline{\Qvec}}{2}, \frac{\Mvec+\overline{\Mvec}}{2}\right)\bigg||\left(\Qvec, \Mvec\right)-\left(\overline{\Qvec}, \overline{\Mvec}\right)|^2 \nonumber\\
&&+\frac{1}{16}\bigg|f^{iv}\left(\frac{\Qvec+\overline{\Qvec}}{2}, \frac{\Mvec+\overline{\Mvec}}{2}\right)\bigg||\left(\Qvec, \Mvec\right)-\left(\overline{\Qvec}, \overline{\Mvec}\right)|^4\nonumber\\
&\le& \frac{1}{4}\bigg|f^{''}\left(\frac{\Qvec+\overline{\Qvec}}{2}, \frac{\Mvec+\overline{\Mvec}}{2}\right)\bigg||\left(\Qvec, \Mvec\right)-\left(\overline{\Qvec}, \overline{\Mvec}\right)|^2 \nonumber\\ && +\frac{1}{8}\bigg|f^{iv}\left(\frac{\Qvec+\overline{\Qvec}}{2}, \frac{\Mvec+\overline{\Mvec}}{2}\right)\bigg||\left(\Qvec, \Mvec\right)-\left(\overline{\Qvec}, \overline{\Mvec}\right)|^2.\nonumber\\
\end{eqnarray}
Applying Lemma~\ref{lem1}, the estimates for the second partial derivatives of $f\left(\Qvec, \Mvec\right)$ on the compact set $K\subset\Omega$, are obtained as 
\begin{eqnarray}
\begin{aligned}
\bigg|\frac{\partial^2 f}{\partial Q_{1i}\partial Q_{1j}}\bigg|&\le A_1\sigma^2:=a'_1,\\
\biggl|\frac{\partial^2 f}{\partial M_{i}\partial M_{j}}\biggr|&\le A_2c_1\sigma+B_2\xi c_3k_1+C_2\xi:=a'_2,\\
\biggl|\frac{\partial^2 f}{\partial Q_{1i}\partial M_{j}}\biggr|&\le A_3c_1\sqrt{Ac_1\frac{\sigma}{\xi}+Bc_3k_1+1 }:=a'_3,\\
\end{aligned}\label{eq:estimates}
\end{eqnarray}
where $A_1, A_2, A_3, B_2$ and $C_2$ are some positive constants independent of $c_1, c_2, c_3, \xi, k_1, l_1, l_2$.\newline\\
Due to the estimates \eqref{eq:case1}, \eqref{eq:first_de1}, \eqref{eq:first_de2}, \eqref{eq:un2'}, \eqref{eq:estimates}, and using \eqref{eq:tay}, we get
\begin{eqnarray}\label{eq:un2}
& &\int_{\Omega}f\left(\frac{\Qvec+\overline{\Qvec}}{2}, \frac{\Mvec+\overline{\Mvec}}{2}\right)-\frac{1}{2}\left(f\left(\Qvec, \Mvec\right) + f\left(\overline{\Qvec}, \overline{\Mvec}\right)\right)\d x\nonumber\\
&\le& \left(a_1+a'_1+a'_3+\zeta\right)||\Qvec-\overline{\Qvec}||^2_{L^2\left(\Omega; \mathbb R^{3\times 3}\right)} + \left(a_2+a'_2+a'_3+\zeta\right)||\Mvec-\overline{\Mvec}||^2_{L^2\left(\Omega;\mathbb R^3\right)},
\end{eqnarray}
where the constant $\zeta := \bigg|f^{iv}\left(\frac{\Qvec+\overline{\Qvec}}{2}, \frac{\Mvec+\overline{\Mvec}}{2}\right)\bigg|$. 
Now, using \eqref{eq:un2} in \eqref{eq:un1}, we have
\begin{eqnarray}\label{eq:un4}
\lefteqn{\mathcal{E}\left(\frac{\Qvec+\overline{\Qvec}}{2}, \frac{\Mvec+\overline{\Mvec}}{2}\right)}\nonumber\\
&\le&\frac{1}{2}\mathcal{E}\left(\Qvec, \Mvec\right) + \frac{1}{2}\mathcal{E}\left(\bar{\Qvec}, \overline{\Mvec}\right)+\left(a_1+a'_1+a'_3+\zeta-\frac{l_1}{8c_p}\right)||\Qvec-\overline{\Qvec}||^2_{L^2\left(\Omega; \mathbb R^{3\times 3}\right)}\nonumber\\
&&+ \left(a_2+a'_2+a'_3+\zeta-\frac{l_2\xi}{8c_p}\right)||\Mvec-\overline{\Mvec}||^2_{L^2\left(\Omega; \mathbb R^3\right)} - \frac{c_3\xi}{8}||M_3-\overline{M_3}||^2_{L^2\left(\Omega; \mathbb R\right)}.
\end{eqnarray}
From \eqref{eq:un4}, it can be observed that
\begin{eqnarray}\label{eq:un5}
\begin{aligned}
\mathcal{E}\left(\frac{\Qvec+\overline{\Qvec}}{2}, \frac{\Mvec+\overline{\Mvec}}{2}\right)<\frac{1}{2}\mathcal{E}\left(\Qvec, \Mvec\right) + \frac{1}{2}\mathcal{E}\left(\overline{\Qvec}, \overline{\Mvec}\right),
\end{aligned}
\end{eqnarray}
when $l_1, l_2>l'\left(c_1, c_2, c_3, \xi, k_1\right)=\max \Big\{8c_p\left(a_1+a'_1+a'_3+\zeta\right), \frac{8c_p}{\xi}\left(a_2+a'_2+a'_3+\zeta\right)\Big\}$.\newline
To obtain a contradiction, we assume that $\left(\Qvec, \Mvec\right)\ne \left(\overline{\Qvec}, \overline{\Mvec}\right)\in\mathcal{A}\times\mathcal{S}$ are minimizers of $\mathcal{E}$ for $l_1,l_2\in\left(l', \infty\right)$. Then the mapping $\phi:[0,1]\rightarrow\mathbb R$, defined by
\begin{eqnarray}
\begin{aligned}
\phi\left(\lambda\right)=\mathcal{E}\left(\lambda\Qvec+\left(1-\lambda\right)\overline{\Qvec}, \lambda\Mvec + \left(1-\lambda\right)\overline{\Mvec}\right),
\end{aligned}
\end{eqnarray}
is $C^1$ and $\phi'\left(\lambda\right)=0$ at $\lambda=0,1$. This contradicts the strictly convex nature c.f.\ \eqref{eq:un5} of the energy functional $\mathcal{E}$. Hence Theorem~\ref{thm:2} follows.
\end{proof}
\section{Numerical approach}\label{sec:5}
We now define the numerical method and setup for our numerical experiments in the next section. We take the re-scaled domain $\Omega$ to be such that:
\[
\Omega = \left\{(x,y) \in \mathbb{R}^2: 0\leq x,y \leq 1 \right\}.
\]We use the following energy dissipation law \cite{onsager1931reciprocal}
\begin{eqnarray}\label{eq:25}
\frac{d}{dt}\mathcal{E}\left(\Qvec, \Mvec\right) = -\int_{\Omega}\left(\eta_1|\partial_t\Qvec|^2 + \eta_2|\partial_t\Mvec|^2\right),
\end{eqnarray}
where $\eta_1>0$ and $\eta_2>0$ are arbitrary friction coefficients \cite{Canevari} that control the energy dissipation. The $L^2$-gradient flow equations for the ferronematic energy \eqref{en:2} can be written as
\begin{eqnarray}
\left\{ \begin{aligned}
\eta_1\frac{\partial}{\partial t}\begin{pmatrix}
Q_{11}\\
Q_{12}
\end{pmatrix} &= 2l_1\Delta \begin{pmatrix}
Q_{11}\\
Q_{12}
\end{pmatrix} - \left(\frac{1}{2}|\Qvec|^2 -1\right)\begin{pmatrix}
Q_{11}\\
Q_{12}
\end{pmatrix} + \frac{c_1}{2}\begin{pmatrix}
M_1^2 - M_2^2\\
2M_1M_2
\end{pmatrix}+ \frac{c_2}{2}\begin{pmatrix}
H_1^2 - H_2^2\\
2H_1H_2
\end{pmatrix},\\
\eta_2\frac{\partial}{\partial t}\begin{pmatrix}
M_1\\
M_2\\
M_3
\end{pmatrix}&= \xi l_2\Delta \begin{pmatrix}
M_1\\
M_2\\
M_3
\end{pmatrix} - \xi\left(|\Mvec|^2-1\right)\begin{pmatrix}
M_1\\
M_2\\
M_3
\end{pmatrix} + c_1\begin{pmatrix}
Q_{11}M_1 + Q_{12}M_2\\
Q_{12}M_1 - Q_{11}M_2\\
0
\end{pmatrix}
+ c_3\xi\begin{pmatrix}
H_1\\
H_2\\
H_3-M_3
\end{pmatrix}.
\end{aligned}\right.\nonumber\\ \label{eq:26}
\end{eqnarray}

We complete the system of PDEs \eqref{eq:26} by the continuous $+k$ degree boundary and initial conditions where $k$ measures the number of rotations by the director or by the magnetization around the boundary. We set $\theta\left(x, y\right) = \operatorname{arctan2}\left(y - 0.5, x - 0.5\right) - \frac{\pi}{2}$ for any $(x,y) \in \overline{\Omega}$ and define the boundary conditions as
\begin{gather}
\begin{aligned}
\left(Q^b_{11}, Q^b_{12}\right)&=\left(\cos(2k\theta),\sin(2k\theta)\right),\\
\left(M^b_{1}, M^b_{2}, M^b_{3}\right)&=\left(\sqrt 3\cos(k\theta), \sqrt 3\sin(k\theta), 0\right),\label{eq:27'}
\end{aligned}
\end{gather}
and the initial conditions as
\begin{gather}
\begin{aligned}
\left(Q^0_{11}, Q^0_{12}\right)&=\left(\cos(2k\theta),\sin(2k\theta)\right),\\
\left(M^0_{1}, M^0_{2}, M^0_{3}\right)&=\left(\sqrt 3\cos(k\theta), \sqrt 3\sin(k\theta), \sqrt 3\right).\label{eq:28'}
\end{aligned}
\end{gather}
The choice of boundary condition is motivated by \cite{Canevari}, where a super-dilute ferronematic suspension is considered. That particular super-dilute suspension is based on the following assumptions: the domain size is very large and the nematic-magnetic coupling strength is very small. In the energy law, the parameters $l_1$ and $l_2$ are assumed to scale as $\mathcal{O}(\xi^2)$ and $c_1=\mathcal{O}\left(\xi\right)$ with $\xi$ being sufficiently small. In addition, we assume that the magnetostatic energy or stray field energy is the minimum on the boundary, i.e., $M_3=0$ on the boundary. 
To define a suitable boundary condition, (inspired by \cite{Canevari}) we let $c_1=2\beta\xi$ for some $\beta>0$ and redefine the bulk potential 
\begin{eqnarray}
f_{\xi}\left(\Qvec, \Mvec\right)&=&\frac{1}{4}\left(\frac{|\Qvec|^2}{2}-1\right)^2+\frac{\xi}{4}\left(|\Mvec|^2-1\right)^2-\beta\xi\Qvec\Mvec\cdot\Mvec+k_{\xi}\nonumber\\
&=& \frac{1}{4}\left(Q_{11}^2+Q_{12}^2-1\right)^2+\frac{\xi}{4}\left(M_1^2+M_2^2-1\right)^2-\beta\xi\sum_{i,j=1,2}Q_{ij}M_iM_j+k_{\xi},\nonumber\\ \label{eq:bulk'}
\end{eqnarray}
where $k_{\xi}$ is uniquely defined by imposing the condition $\inf f_{\xi}=0$, and satisfies $k_{\xi}\rightarrow 0$ as $\xi\rightarrow 0$. The choice of $\Qvec$ and $\Mvec$ on the boundary is determined by the condition that $f_{\xi}(\Qvec, \Mvec)\rightarrow 0$ when $\xi\rightarrow 0$. 
For any $\beta>0$, a calculation following \cite[Appendix~B]{Canevari} yields a unique choice $|\Mvec|=\sqrt{2\beta+1}$ and $|\Qvec|=\sqrt{1+2\beta^2\xi+\beta\xi}$, that satisfies the mentioned physical assumption. Further, the boundary conditions in \eqref{eq:27'} are obtained in the limit $\xi\rightarrow 0$ and $\beta=1$.
\begin{remark}
On the boundary, we impose $M_3=0$ so that the magnetic stray field energy remains minimal. A non-zero choice of $M_3$ can also be considered following a similar argument as discussed above. As we are interested in non-zero solutions of $M_3$, we prescribe a non-zero $M_3$ in the initial condition \eqref{eq:28'}. Moreover, we choose the boundary condition \eqref{eq:27'} and the initial condition \eqref{eq:28'} in such a way that the director and magnetization vector maintain co-alignment. \newline
\end{remark}
\subsection{Spatial and temporal discretization}
The time-independent or equilibrium solutions to the $L^2$-gradient flow system, are solutions of the Euler--Lagrange equations associated with \eqref{eq:en12}. From the numerical point of view, solving the $L^2$-gradient flow system is more straightforward than solving a coupled Euler--Lagrange system, primarily due to the inclusion of time relaxation that allows us to use the Crank--Nicolson finite difference method \cite{Burden-Faires, smith1985numerical} as outlined below.
We discretise the unit square $\Omega$ with a spatial grid of $N$ points in the $x$-direction and $N$ points in the $y$-direction. We assume that each point in the 2D grid is represented by $x_i=i*\delta x$ and $y_j=j*\delta y$ with equal step sizes $\delta x$, $\delta y$ and spatial indices $i,j$ $(i,j=1,2,\ldots, N)$. Similarly, we represent temporal grids by $t_n=n*\delta t$ with step size $\delta t$ and temporal index $n$ $(n=0,1,2,\ldots )$. Therefore, the numerical solution of the system of PDEs, at a given spatial point and temporal point, can be denoted by $\mathcal{Z}^n_{ij}$.
\subsection{Crank--Nicolson method}
To obtain the working formula of the Crank--Nicolson method, we introduce the following forward and central finite differences \cite{Burden-Faires, smith1985numerical} to replace the partial derivatives in the system of PDEs with their discrete versions:
$$\partial_t \mathcal{Z}^n_{ij} \approx \frac{\mathcal{Z}^{n+1}_{ij} - \mathcal{Z}^{n}_{ij}}{\delta t}$$
and
$$\partial_{xx} \mathcal{Z}^n_{ij} \approx \frac{\mathcal{Z}^{n}_{i-1j} - 2\mathcal{Z}^{n}_{ij} + \mathcal{Z}^{n}_{i+1j}}{\delta x^2};~\partial_{yy} \mathcal{Z}^n_{ij} \approx \frac{\mathcal{Z}^{n}_{ij-1} - 2 \mathcal{Z}^{n}_{ij} + \mathcal{Z}^{n}_{ij+1}}{\delta y^2}.$$
Here, $\mathcal{Z}$ is the prototype of the variables $Q_{11}, Q_{12}, M_1, M_2$ and $M_3$. In the general setting, the gradient flow system \eqref{eq:26} can be written as
\begin{eqnarray}
\eta\frac{\partial \mathcal{Z}}{\partial t}=l\Delta\mathcal{Z} + f\left(\mathcal{Z}\right),\label{CN}
\end{eqnarray}
where $\eta\in\{\eta_1,\eta_2\}$ and $l\in\{2l_1,l_2\xi\}$. 

The finite difference-based Crank--Nicolson discretization of \eqref{CN} yields:
\begin{eqnarray}
\begin{aligned}
\eta \frac{\mathcal{Z}^{n+1}_{ij}-\mathcal{Z}^n_{ij}}{\delta t}=\frac{l}{2}\left(\frac{\mathcal{Z}^{n+1}_{i-1j} - 2\mathcal{Z}^{n+1}_{ij} + \mathcal{Z}^{n+1}_{i+1j}}{\delta x^2} + \frac{\mathcal{Z}^{n}_{i-1j} - 2\mathcal{Z}^{n}_{ij} + \mathcal{Z}^{n}_{i+1j}}{\delta x^2}\right )\\
+l\left(\frac{\mathcal{Z}^{n}_{ij-1} - 2 \mathcal{Z}^{n}_{ij} + \mathcal{Z}^{n}_{ij+1}}{\delta y^2}\right)+f(\mathcal{Z}^{n+1}_{ij}).\label{CN1}
\end{aligned}
\end{eqnarray}
We implement the finite difference scheme along the $x$-direction i.e., the derivative in the $x$-direction is computed as the average of two time steps at the $\left(n+1\right)$th and $n$th time-steps, while the derivative in the $y$-direction is discretized at the $n$th time step. To linearize the nonlinear terms, we employ Newton's linearization technique which involves the substitution: 
\begin{eqnarray}
\mathcal{Z}^{n+1}_{ij}=\mathcal{Z}^n_{ij}+\delta \mathcal{Z}^n_{ij},\label{CN2}
\end{eqnarray}
where $\delta \mathcal{Z}^n_{ij}$ labels the error committed at each grid point in two consecutive approximations (i.e., at the $(n+1)$th and $n$th time steps). Since the error $\delta \mathcal{Z}^n_{ij}$ is very small, terms involving the square and higher powers of $\delta \mathcal{Z}^n_{ij}$ are disregarded. The resulting system of algebraic equations leads to a tri-diagonal system of the form
$$a_{i}\delta \mathcal{Z}^{n}_{i-1j}+b_{i}\delta \mathcal{Z}^{n}_{ij}+c_{i}\delta \mathcal{Z}^{n}_{i+1j}=d_{i}(\mathcal{Z}^n_{ij}),$$
where $a_{i}$, $b_{i}$, $c_{i}$, and $d_{i}$ are known from the solutions at the $n$th time step. The system of algebraic equations is solved for $\delta \mathcal{Z}_{ij}^n$ by employing the Tri-diagonal matrix algorithm \cite{Burden-Faires}. Once we determine the error at each grid point $\left(i, j\right)$, the solution is improved at the $\left(n+1\right)$th step by equation \eqref{CN2}. We refer to a complete implementation of the algorithm as an iteration 
and we iterate until convergence is achieved. 
 At each fixed $n$, we obtain a sequence of approximations $\{(\mathcal{Z}^{n}_{ij})^{(k)}\}_{k=1}^{\infty}$ that converge to a solution until $$\left\|\left(\mathcal{Z}^{n}_{ij}\right)^{(k+1)}-\left(\mathcal{Z}^{n}_{ij}\right)^{(k)}\right\|<\epsilon\left(=10^{-6}< \mathcal{O}\left(\delta x^2 + \delta t^2\right)\right)$$ is satisfied.
 
\subsection{Numerical stability}
\label{sec:experiments}
Our numerical results are generated in MATLAB. We perform numerical experiments with $\eta_1=\eta_2=1$ to deduce the values of $\delta x, \delta y$ and $\delta t$ that ensure convergence. 
 These experiments suggest the following choices: $\delta x=\delta y=0.02$ and $\delta t=0.00001$. 
To support these choices, we present the spatial convergence analysis in \Cref{Fig:Com1} and the temporal convergence analysis in \Cref{Fig:Com2}. \begin{figure}[ht!]
     \centering
     \begin{subfigure}[b]{0.45\textwidth}
         \centering
         \includegraphics[width=\textwidth]{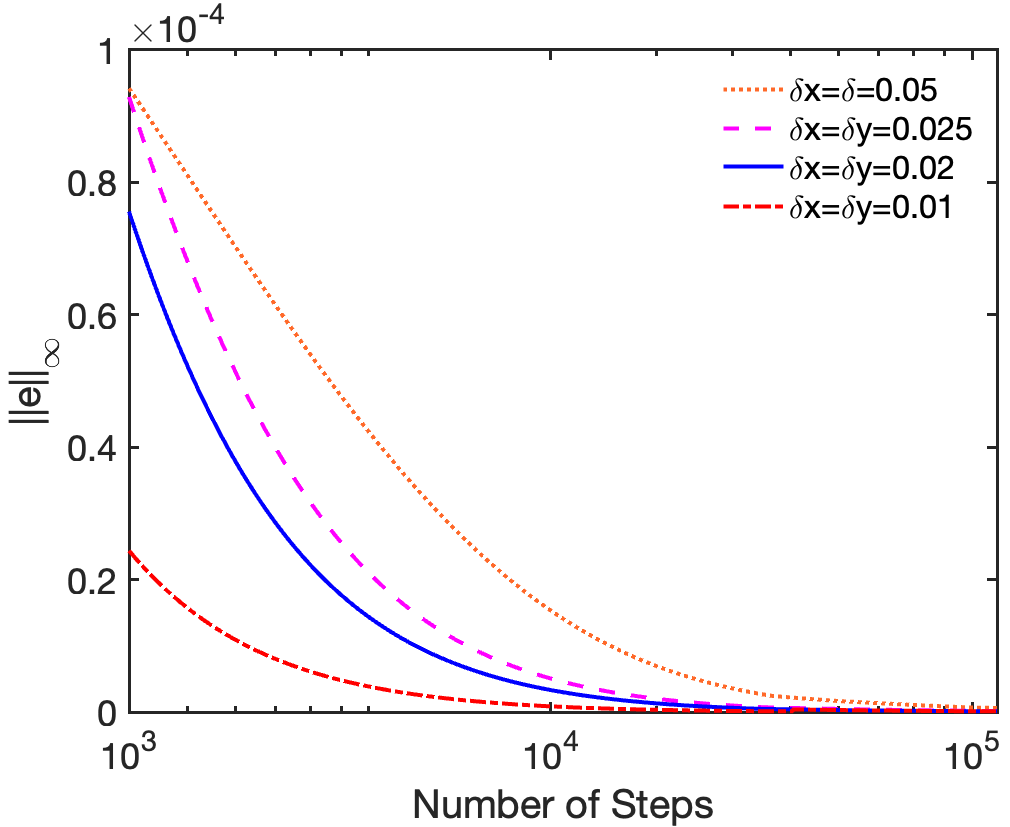}
         \caption{}
         \label{Fig:Com1}
     \end{subfigure}
     \begin{subfigure}[b]{0.45\textwidth}
         \centering
         \includegraphics[width=\textwidth]{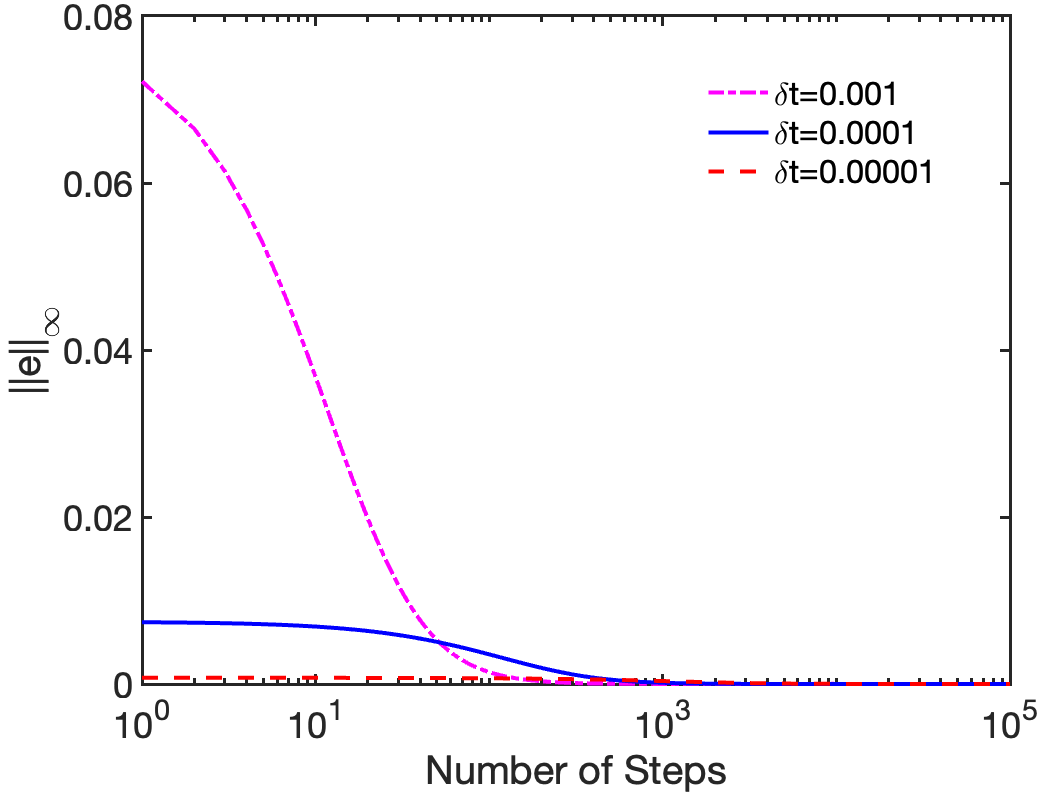}
         \caption{}
         \label{Fig:Com2}
     \end{subfigure}
\caption{(a) Spatial convergence, (b) Temporal convergence. The error $||e||_\infty $ decreases to $10^{-6}$ when the number of steps exceeds $10^{4}$. These figures indicate that the selection $\delta x=\delta y=0.02$ and $\delta t=0.00001$ is reasonable to meet our stability requirements.}
     \label{Fig1}
\end{figure}
For the convergence tests, the error $||e||_{\infty}:=||\left(\mathcal{Z}^{n}\right)^{(k+1)}-\left(\mathcal{Z}^{n}\right)^{(k)}||_{\infty}$ is calculated at each iteration. For the spatial convergence test, we study the error propagation with the number of iterations for different grid sizes $\delta x$ and $\delta y$, with a fixed $\delta t=0.00001$. For the temporal convergence test, we study the error propagation with the number of iterations for different $\delta t$ with fixed $\delta x=\delta y=0.02$. In both cases, the error $||e||_\infty$ decreases to $10^{-6}$ when the number of iterations exceeds $10^{4}$ and the error becomes independent of the choice of grid sizes. Furthermore, this study indicates that the computational scheme has second-order accuracy in space and time, which is consistent with the theory of the Crank--Nicolson FD method \cite{Burden-Faires, smith1985numerical} (see \Cref{Fig1}).

\subsection{Test Example}\label{sec:test}

We validate our numerical scheme and gain some preliminary insights by reproducing the results in Bisht et al.\ \cite{Bisht2020}. The energy \eqref{eq:en12} reduces to the ferronematic energy employed in \cite{Bisht2020} when $M_3 = 0$ and $\Hvec_{ext}=0$, 
and \eqref{eq:26} reduces to the associated set of Euler--Lagrange equations when $\eta_1=\eta_2=0$.
We use the following Dirichlet boundary conditions for $\Qvec$ and $\Mvec$: $\left(Q^b_{11}, Q^b_{12}\right)=\left(-1, 0\right)$ and $\left(M^b_{1}, M^b_{2}, M^b_{3}\right)=\left(0, 1, M^b_{3}\right)$ at $x=0$, $\left(Q^b_{11}, Q^b_{12}\right)=\left(-1, 0\right)$ and $\left(M^b_{1}, M^b_{2}, M^b_{3}\right)=\left(0, -1, M^b_{3}\right)$ at $x=1$, $\left(Q^b_{11}, Q^b_{12}\right)=\left(1, 0\right)$ and $\left(M^b_{1}, M^b_{2}, M^b_{3}\right)=\left(-1, 0, M^b_{3}\right)$ at $y=0$, $\left(Q^b_{11}, Q^b_{12}\right)=\left(1, 0\right)$ and $\left(M^b_{1}, M^b_{2}, M^b_{3}\right)=\left(1, 0, M^b_{3}\right)$ at $y=1$. These Dirichlet boundary conditions are compatible with tangent boundary conditions for $\Qvec$, i.e., the nematic director $\nvec = (\pm 1, 0)$ on $y=0,1$ and $\nvec = (0, \pm 1)$ on $x=0,1 $, so that $\nvec$ is tangent to the square edges. Tangent boundary conditions are experimentally relevant and have been used extensively in experiments and theoretical studies \cite{luo2012, tsakonas2007}. In this 2D framework, defects are identified by $\Qvec=0$ (where the nematic director is not defined) and the nodal set of $\Mvec$.
We complete the system with the arbitrary initial guesses $\left(Q^i_{11}, Q^i_{12}\right)=\left(c_1, c_1\right)$ and $\left(M^i_{1}, M^i_{2}, M^i_{3}\right)=\left(c_1, c_1, M^i_{3}\right)$.

We test our code by setting $M_3=0$, $M^i_3=0$, $M^b_{3}=0$, $\Hvec_{ext}=\mathbf{0}$ and considering the limit $\eta_1,\eta_2\to0$, so that the system of equations \eqref{eq:26} agrees with the system of PDEs in \cite{Bisht2020}. We set  $l'_1=l_2=0.001$ where $l_1'=2l_1$ and $\xi=1$. For these choices, we obtain the stable ferronematic profiles $\left(\Qvec^1_{D, 0.5}, \Mvec^1_{D, 0.5}\right)$ for $c_1=0.5$, as reported in \cite{Bisht2020}. Similarly, we fix $l'_1=l_2=0.001$, $\xi=10$ and obtain the ferronematic profiles $\left(\Qvec^{10}_{D, 0.25}, \Mvec^{10}_{D, 0.25}\right)$ for $c_1=0.25$, as reported in \cite{Bisht2020}. In both cases $\eta_1=\eta_2=0.0005$ to capture the limit $\eta_1,\eta_2\to0$. The solutions are plotted in \Cref{PRE1}, thus verifying the accuracy of the numerical method and MATLAB implementation. 
\begin{figure}[ht!]
\vspace{0.5cm}
     \centering
     \begin{subfigure}{0.25\textwidth}
         \centering
         \includegraphics[width=\textwidth]{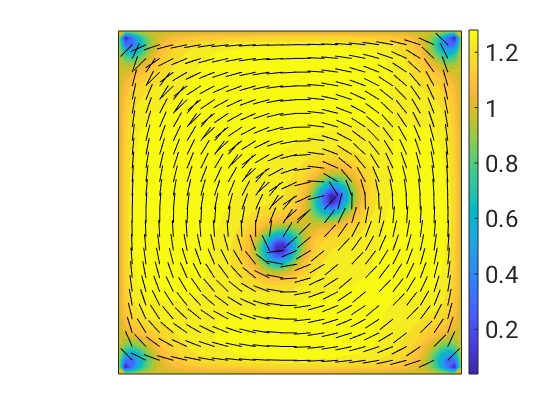}
         \caption{$\Qvec^1_{D, 0.5}$}
         \label{Fig:Comparison1}
     \end{subfigure}
     \hspace{0.1cm}
     \begin{subfigure}{0.25\textwidth}
         \centering
         \includegraphics[width=\textwidth]{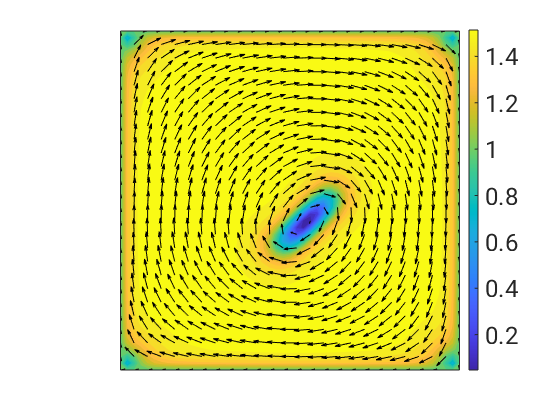}
          \caption{$\Mvec^1_{D, 0.5}$}
         \label{Fig:Comparison2}
     \end{subfigure}
     \\
     \centering
     \begin{subfigure}[b]{0.25\textwidth}
         \centering
         \includegraphics[width=\textwidth]{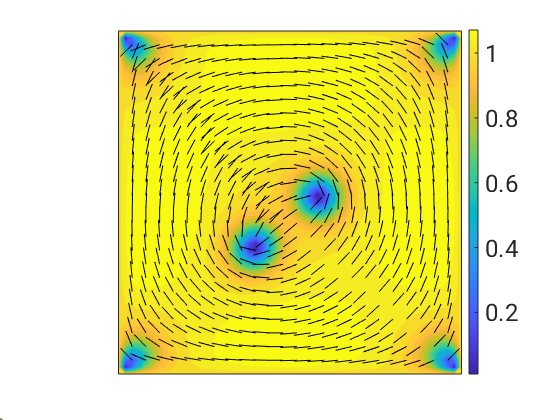}
         \caption{$\Qvec^{10}_{D, 0.25}$}
         \label{Fig:Comparison3}
     \end{subfigure}
     \hspace{0.1cm}
     \centering
     \begin{subfigure}[b]{0.25\textwidth}
         \centering
         \includegraphics[width=\textwidth]{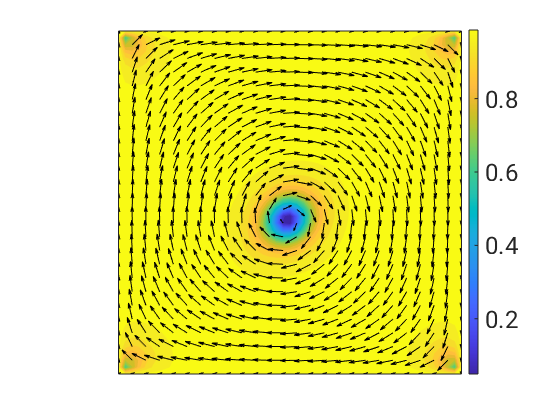}
        \caption{$\Mvec^{10}_{D, 0.25}$}
        \label{Fig:Comparison4}
     \end{subfigure}
\caption{Ferronematic profiles $\left(\Qvec^1_{D, 0.5}, \Mvec^1_{D, 0.5}\right)$ and $\left(\Qvec^{10}_{D, 0.25}, \Mvec^{10}_{D, 0.25}\right)$ for $l'_1=l_2=0.001$, $M_3=0, \Hvec_{ext}=\left(0, 0, 0\right)$, $\eta_1=\eta_2=0.0005$ and $\xi=1,10$ respectively, consistent with \cite{Bisht2020}. In the $\Qvec$ plots, black bars represent the director field $\nvec$ and the color represents $\frac{1}{2}|\Qvec|^2=Q_{11}^2+Q_{12}^2$. In the $\Mvec$ plots, black arrows represent the 2D magnetization and the color bar represents $|\Mvec|^2=M_1^2+M_2^2$.}
\label{PRE1}
\end{figure}

We briefly comment on the solutions in \Cref{PRE1}. The boundary conditions for $\Mvec$ topologically require a $+1$-degree interior magnetic vortex or defect, labelled by the blue regions at the square centre. When $\xi=1$, the magnetic energy is comparable to the nematic energy and a strong nemato-magnetic coupling enforces the creation of two $+1/2$-defects in $\Qvec$, coerced by the $+1$-degree magnetic vortex. When $\xi=10$, the magnetic energy is much stronger and we get the same effect with weaker nemato-magnetic coupling, i.e., two interior non-orientable nematic defects near the square centre that are tailored by the $+1$-degree interior magnetic vortex.

\section{Numerical results - Impact of an external magnetic field and the stray field energy} \label{sec:numerical_experiments}
 This section is devoted to numerical results that demonstrate intriguing physics emerging from the incorporation of magnetic particles into nematic media in the presence of an external magnetic field and investigate the relevance  of the inclusion of the Gioia-James approximation of the stray field energy.
\subsection{Euler--Lagrange equation solutions}

We begin by studying the effect of an external magnetic field and the stray field energy on the ferronematic profiles in \Cref{PRE1}. As such, we consider $\eta_1=\eta_2=0.0001$ in \Cref{Fig-new1} and $\eta_1=\eta_2=0.0005$ in \Cref{Fig-new2}  to capture the limit $\eta_1,\eta_2\to 0$ and obtain solutions of the Euler--Lagrange system \eqref{EL}. We use the boundary and initial conditions described in \Cref{sec:test} with $M_3^b=0.25$.
\begin{figure}[ht!]
    \centering
    $\Qvec$$\left(M_3=0\right)$\hspace{1.79cm}$\Qvec$$\left(M_3\ne0\right)$\hspace{1.79cm}$\Mvec$$\left(M_3=0\right)$\hspace{1.79cm}$\Mvec$$\left(M_3\ne 0\right)$\\
    \vspace{1.5mm}
    \includegraphics[width=3.65cm]{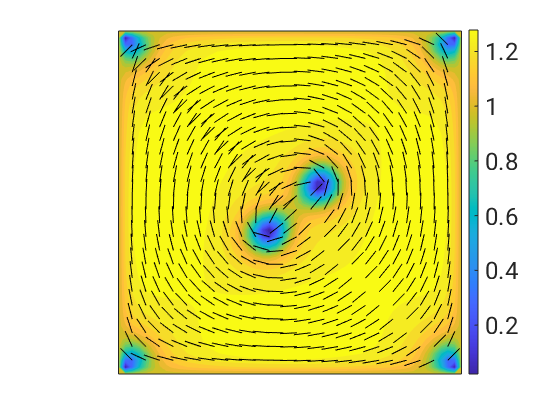}
    \includegraphics[width=3.65cm]{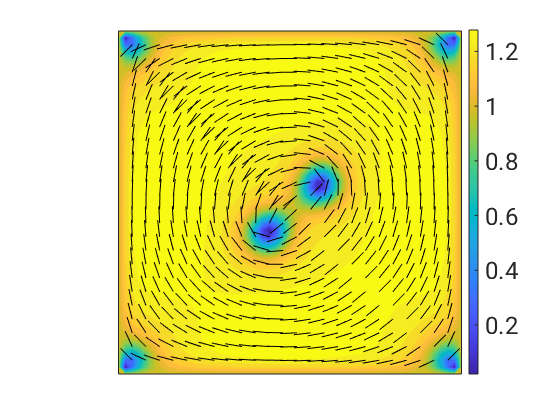}
    \includegraphics[width=3.65cm]{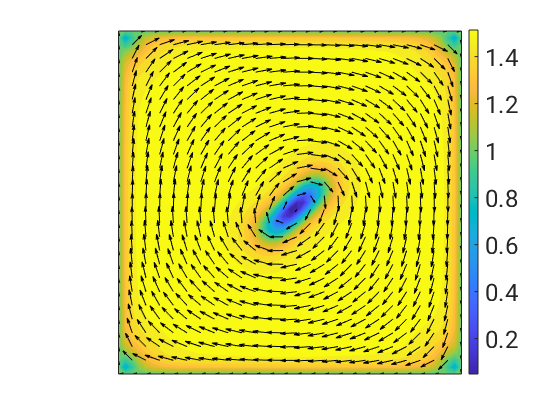}
    \includegraphics[width=3.65cm]{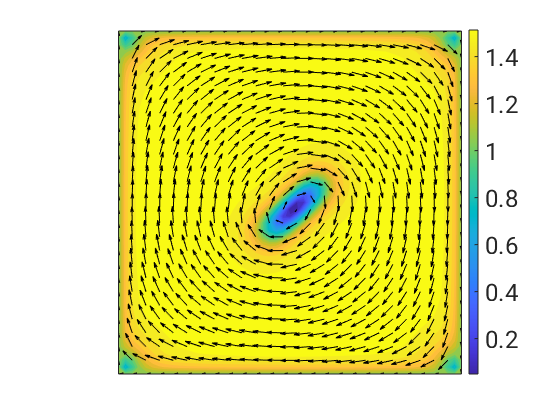}
    \\
    \vspace{2mm}
    \includegraphics[width=3.65cm]{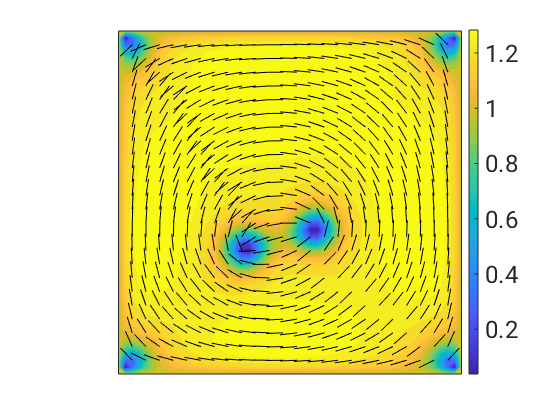}
    \includegraphics[width=3.65cm]{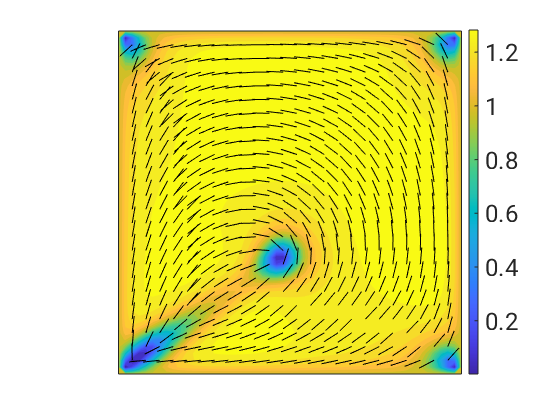}
    \includegraphics[width=3.65cm]{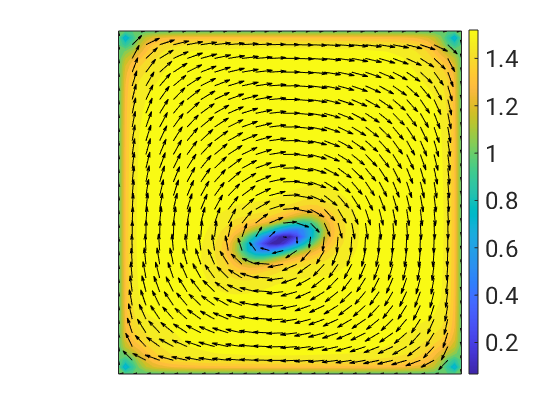}
    \includegraphics[width=3.65cm]{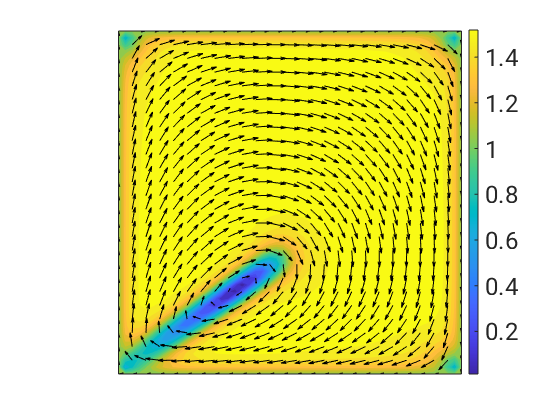}
    \\
    \vspace{2mm}
    \includegraphics[width=3.65cm]{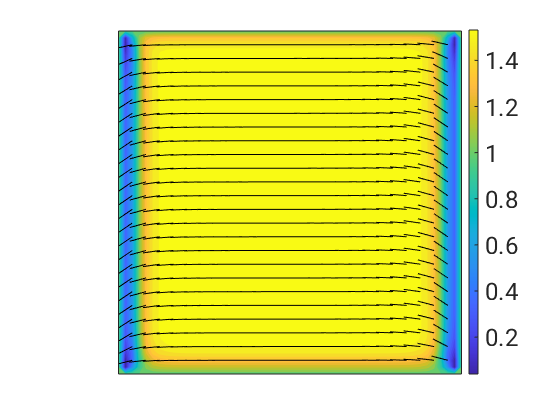}
    \includegraphics[width=3.65cm]{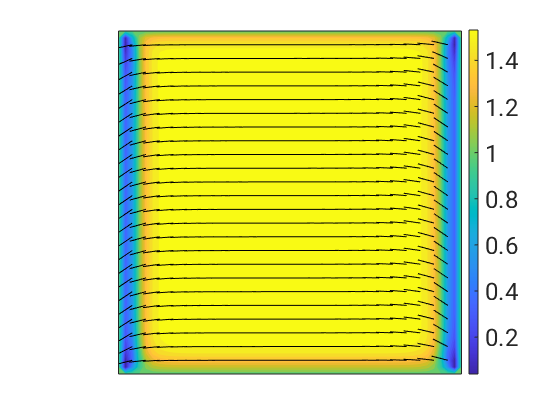}
    \includegraphics[width=3.65cm]{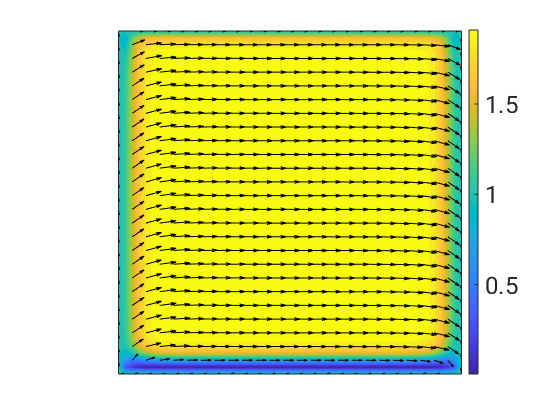}
    \includegraphics[width=3.65cm]{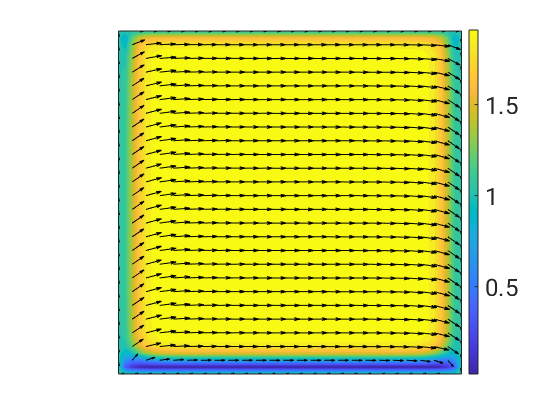}
    \caption{Nematic and magnetic configurations at different $\Hvec_{ext}$ strengths. $\Hvec_{ext}=\left(0, 0, 0\right), \left(0.00265, 0, 0\right), \left(0.25, 0, 0\right)$, varying vertically downwards. The first and second columns depict the director profiles, ignoring and including the stray field energy, respectively. The third and fourth columns show the magnetization profiles, ignoring and including the stray field energy, respectively. Parameter set: $l_1'=l_2=0.001$, $c_2=8$, $c_3=2$, $\xi=1$, $c_1=0.5$, $\eta_1=\eta_2=0.0001$. 
    }
    \label{Fig-new1}
\end{figure}

\begin{figure}[ht!]
    \centering
    $\Qvec$$\left(M_3=0\right)$\hspace{1.79cm}$\Qvec$$\left(M_3\ne0\right)$\hspace{1.79cm}$\Mvec$$\left(M_3=0\right)$\hspace{1.79cm}$\Mvec$$\left(M_3\ne 0\right)$\\
    \includegraphics[width=3.65cm]{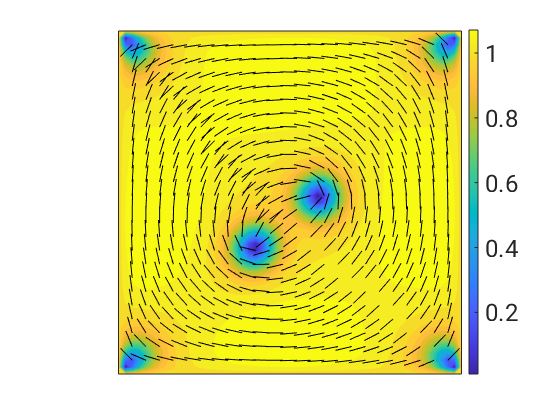}
    \includegraphics[width=3.65cm]{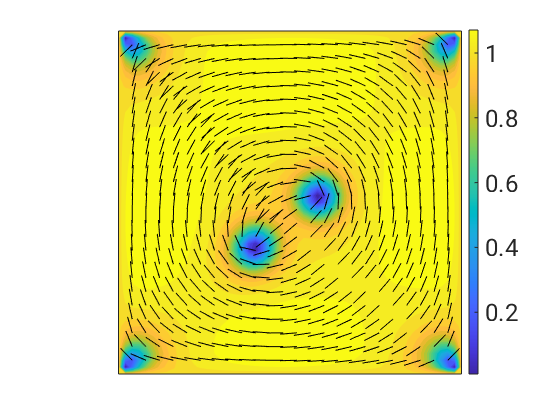}
    \includegraphics[width=3.65cm]{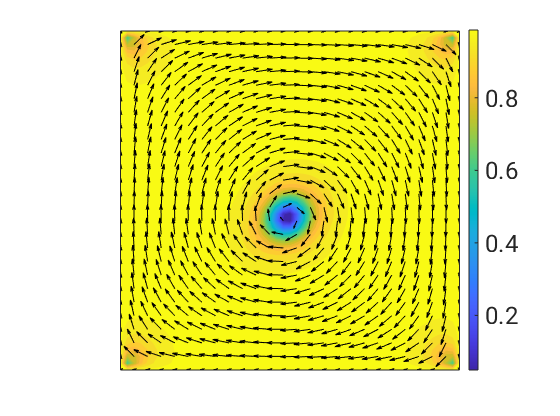}
    \includegraphics[width=3.65cm]{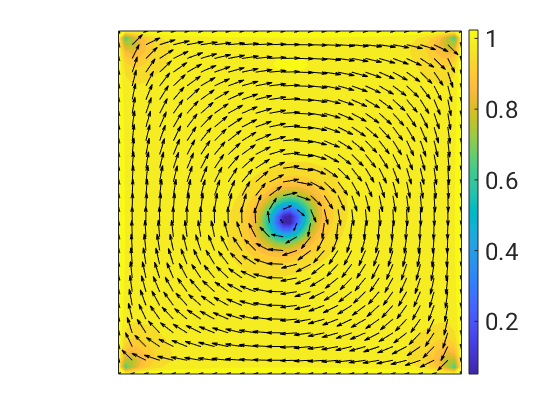}
    \\
    \vspace{2mm}
    \includegraphics[width=3.65cm]{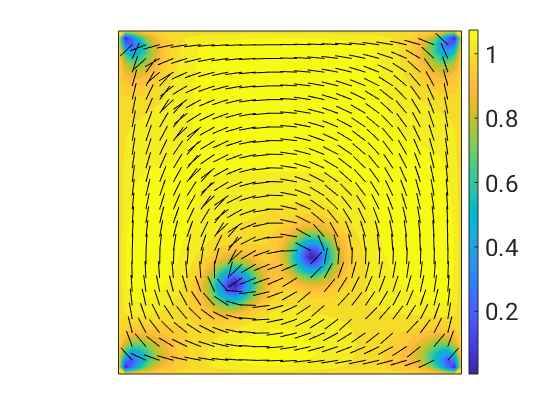}
    \includegraphics[width=3.65cm]{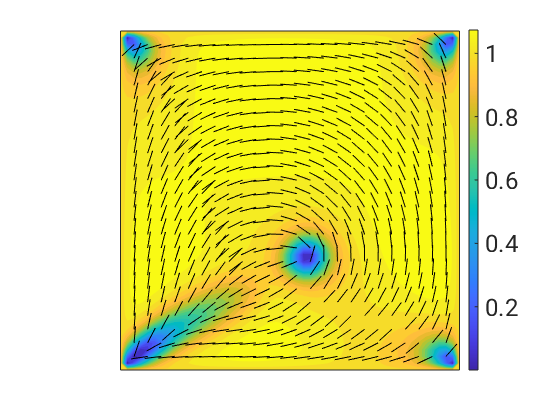}
    \includegraphics[width=3.65cm]{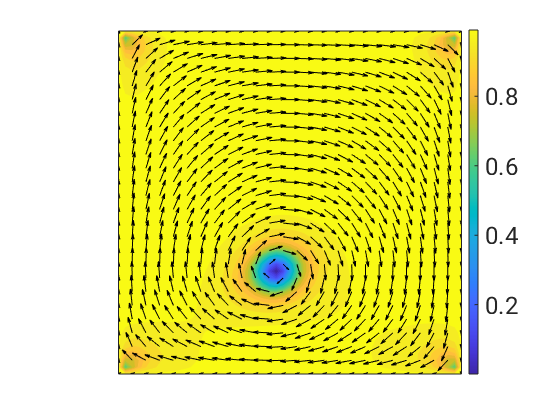}
    \includegraphics[width=3.65cm]{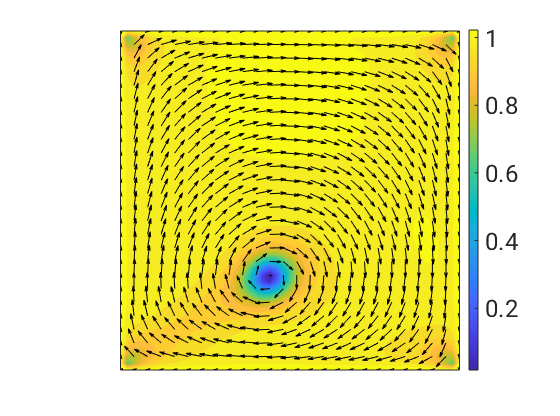}
    \\
    \vspace{2mm}
    \includegraphics[width=3.65cm]{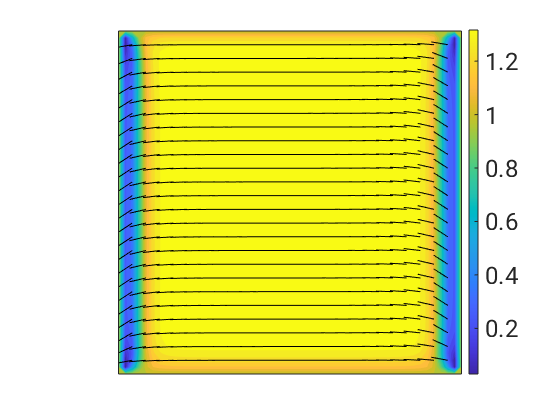}
    \includegraphics[width=3.65cm]{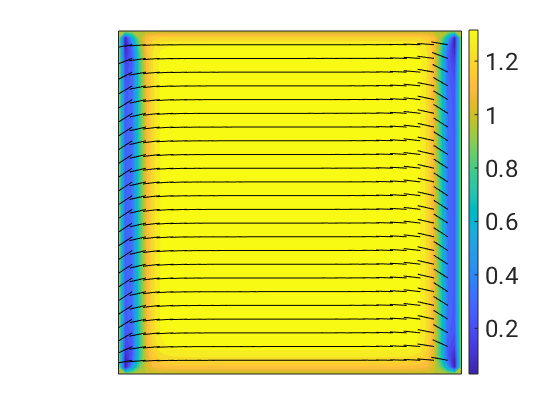}
    \includegraphics[width=3.65cm]{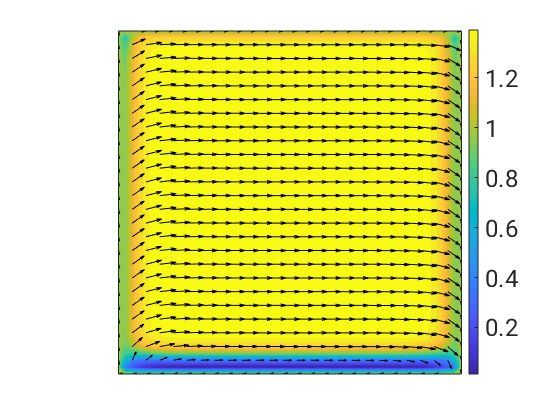}
    \includegraphics[width=3.65cm]{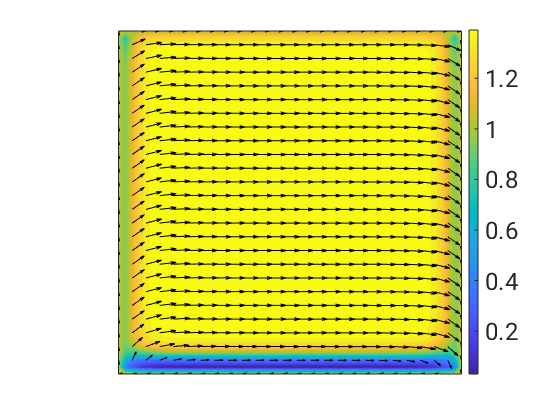}
    \caption{Nematic and magnetic configurations for different strengths of $\Hvec_{ext}$, both with and without a stray field energy i.e., $M_3\neq0$ and $M_3=0$. $\Hvec_{ext}=\left(0, 0, 0\right), \left(0.002825, 0, 0\right), \left(0.25, 0, 0\right)$, varying vertically downwards. 
    Parameter set: $l_1'=l_2=0.001$, $c_2=8$, $c_3=2$, $\xi=10, c_1=0.25$, $\eta_1=\eta_2=0.0005$.  
    }
    \label{Fig-new2}
\end{figure}

In \Cref{Fig-new1}, we focus on two cases with the following parameter values: $l'_1=l_2=0.001$, $c_2=8$, $c_3=2$, $\xi=1$, $c_1=0.5$. The cases are distinguished by zero and non-zero stray field energy, captured via the $M_3=0$ and $M_3\ne 0$ scenarios, respectively (the stray field energy is zero for vanishing $M_3$ throughout the domain). We perform numerical tests for various values of $\Hvec_{ext}$ and present results for three values that show interesting trends: $\Hvec_{ext}=\left(0,0,0\right)$, $ \left(0.00265, 0, 0\right)$, $ \left(0.25, 0, 0\right)$ (the external magnetic field is increasing as we move down the columns). When $\Hvec_{ext}=\left(0, 0, 0\right)$, we recover the results in \cite{Bisht2020}, i.e., the ferronematic profiles $\left(\Qvec^1_{D, 0.5}, \Mvec^1_{D, 0.5}\right)$ with $c_1=0.5$ as in \cite{Bisht2020}. For $\left(\Qvec^1_{D, 0.5}, \Mvec^1_{D, 0.5}\right)$, the nematic director profile has two interior defects of $+\frac{1}{2}$ charge while the magnetization profile shows an interior vortex maintaining the co-alignment with the nematic director. These interior defects and vortices are characterized by $|\Qvec|=0$ and $|\Mvec|=0$, respectively. We refer to \cite{ball_notes} for a detailed study on nematic defects. 
When we activate the external magnetic field with $\Hvec_{ext}=\left(0.00265, 0, 0\right)$, we observe two scenarios depending on whether we set $M_3 \equiv 0$ (including setting $M_3=0$ in the Dirichlet boundary condition and initial condition, or solve for $M_3$ with non-zero boundary conditions and initial conditions for $M_3$.  
The second case with $M_3 \neq 0$ describes the effects of the stray field energy. In the first and third columns of \Cref{Fig-new1}, the effects of $\Hvec_{ext}$ are apparent - the external magnetic field is in the $x$-direction and re-orients the nematic and magnetic defects in the $x$-direction. In fact, the nematic director and the magnetization vector co-align with $\Hvec_{ext}$ by means of the nematic coupling with the external magnetic field (captured by $c_2$) and the Zeeman energy (captured by $c_3$) respectively. For $\Hvec_{ext}=\left(0.25, 0, 0\right)$, the nematic director and the magnetization vector are completely aligned with $\Hvec_{ext}$. When $M_3 \neq 0$ (including non-zero boundary conditions  
and initial conditions for $M_3$), the stray field energy can play a role. This is evident from the second row of the second and fourth columns. For the $\Qvec$-profile, one nematic defect is pushed away from the square centre and the second defect is almost pinned at the bottom left square vertex. The magnetic vortex clearly migrates towards the bottom left square vertex and we get a dispersed magnetic vortex near the bottom left square vertex.  For $\Hvec_{ext}=\left(0.25, 0, 0\right)$, the profiles with $M_3=0$ and $M_3 \neq 0$ are almost indistinguishable and we get perfectly co-aligned nematic directors and $\Mvec$-profiles, tailored by the dominant external magnetic field. These preliminary observations suggest that $\Hvec_{ext}$ clearly reorients the nematic director and the magnetization vector and the stray field tends to expel nematic and magnetic defects towards the boundaries.

In \Cref{Fig-new2}, we examine the impact of a stray field for the following parameter values: $l'_1=l_2=0.001$, $c_2=8$, $c_3=2$, $\xi=10$, $c_1=0.25$. 
Again we study various values for $\Hvec_{ext}$ and present three cases that appear interesting: $\Hvec_{ext}=\left(0,0,0\right)$, $\left(0.002825, 0, 0\right)$, $\left(0.25, 0, 0\right)$. When $\Hvec_{ext}=\left(0, 0, 0\right)$, we find ferronematic profiles $\left(\Qvec^{10}_{D, 0.25}, \Mvec^{10}_{D, 0.25}\right)$ with $c_1=0.25$, and there are negligible differences between the $M_3=0$ and $M_3 \neq 0$ cases. In the case of $\left(\Qvec^{10}_{D, 0.25}, \Mvec^{10}_{D, 0.25}\right)$, the director profile has two interior defects of $+\frac{1}{2}$ charge while the magnetization profile shows an interior vortex maintaining the co-alignment. When we activate the external magnetic field with $\Hvec_{ext}=\left(0.002825, 0, 0\right)$, we observe two scenarios depending on whether $M_3=0$ or $M_3\ne 0$. When $M_3=0$, we observe the realignment of the nematic and magnetic defects in the $x$-direction, induced by the coupling with the external field and all defects tend to move towards the square edges. For $M_3 \neq 0$, the effect is more pronounced in the sense that one nematic defect gets pinned at the bottom left square vertex. We note that the magnetic defects are more influenced by the stray field (captured by $M_3 \neq 0$) for $\xi=1$, than for $\xi=10$. This is readily explained by the much increased dominance of the magnetic energy for $\xi=10$ and the magnetic energy favours the creation of an interior $+1$-vortex.  We observe (almost) complete alignment of the nematic director and the magnetization vector in the $x$-direction for $\Hvec_{ext}=\left(0.25, 0, 0\right)$. We note that the differences between the $M_3=0$ and $M_3 \neq 0$ solutions are not appreciable for $\Hvec_{ext} = (0,0,0)$ and large $|\Hvec_{ext}|$. We speculate that $M_3$ remains small when $\Hvec_{ext} = (0,0,0)$ even when we include the non-zero boundary and initial conditions for $M_3$ and solve for $\Mvec = (M_1, M_2, M_3)$. For large $|\Hvec_{ext}|$, the Zeeman energy dominates the stray field energy and hence, there are negligible differences between the $M_3=0$ and $M_3 \neq 0$ scenarios. 

\begin{remark}
We note that the influence of the stray field on defect patterns in the above ferronematic profiles can also be visible with $M_3^b=0$ under suitable external magnetic fields $\Hvec_{ext}$.
\end{remark}

\subsection{Gradient flow solutions}
The next set of numerical results concern the nematic and magnetic profiles obtained by solving the gradient flow equations \eqref{eq:26}, supplemented with the boundary conditions \eqref{eq:27'} and the initial conditions \eqref{eq:28'} for degrees $k=1, 2$. As such, we take $\eta_1=\eta_2=1$ in this subsection and study the time-relaxed solutions at time $t=0.01$. 
\begin{figure}[ht!]
    \centering
    $\Qvec$$\left(M_3=0\right)$\hspace{1.79cm}$\Qvec$$\left(M_3\ne0\right)$\hspace{1.79cm}$\Mvec$$\left(M_3=0\right)$\hspace{1.79cm}$\Mvec$$\left(M_3\ne 0\right)$\\
    \includegraphics[width=3.65cm]{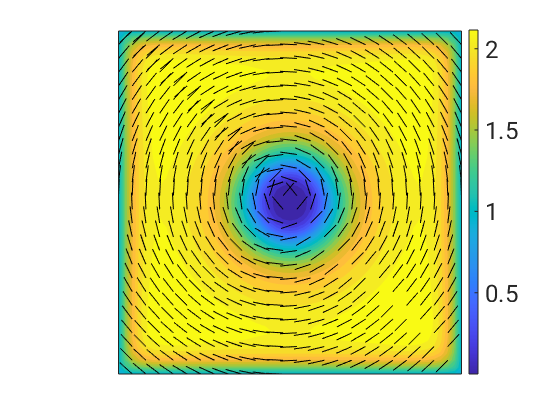}
    \includegraphics[width=3.65cm]{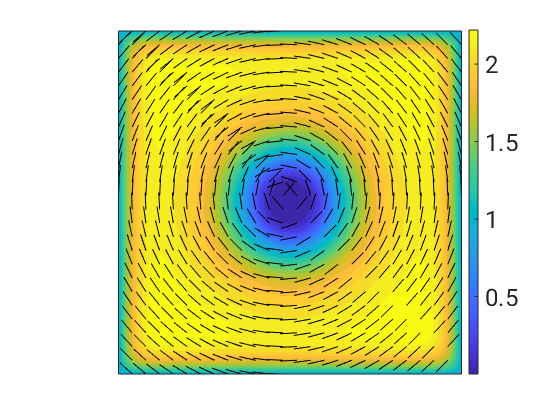}
    \includegraphics[width=3.65cm]{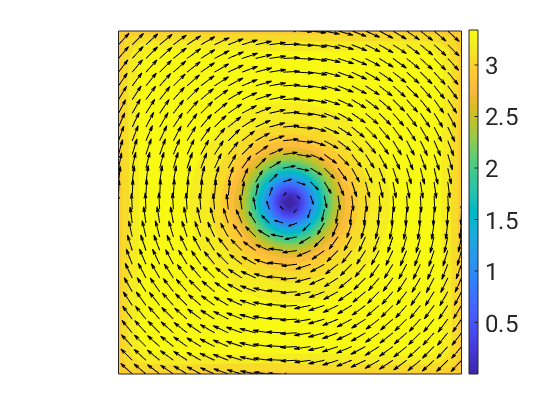}
    \includegraphics[width=3.65cm]{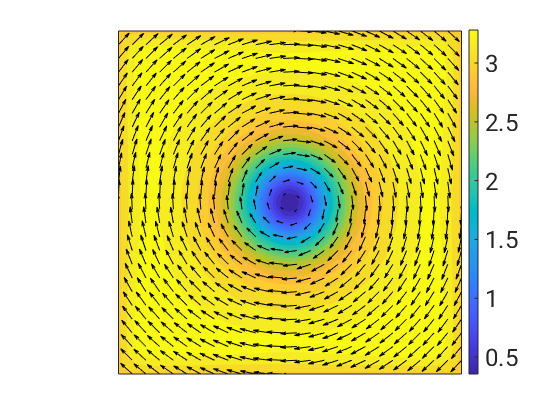}
    \\
    \vspace{2mm}
    \includegraphics[width=3.65cm]{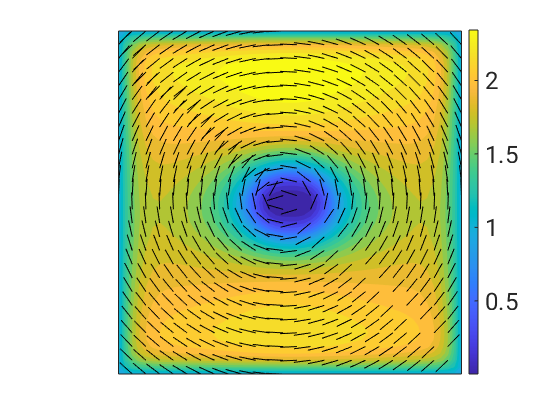}
    \includegraphics[width=3.65cm]{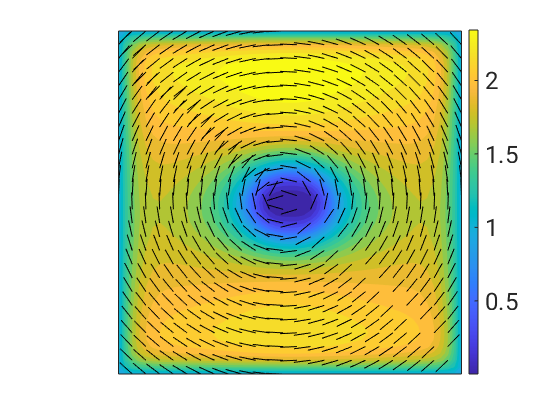}
    \includegraphics[width=3.65cm]{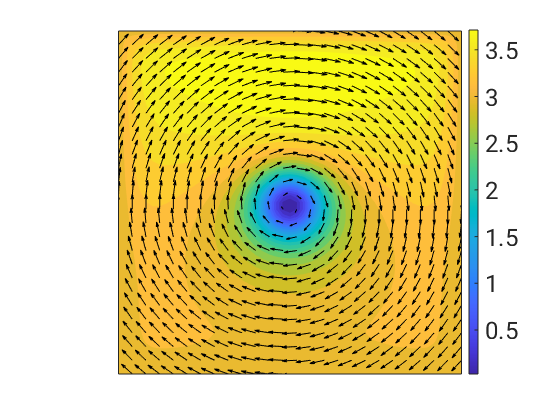}
    \includegraphics[width=3.65cm]{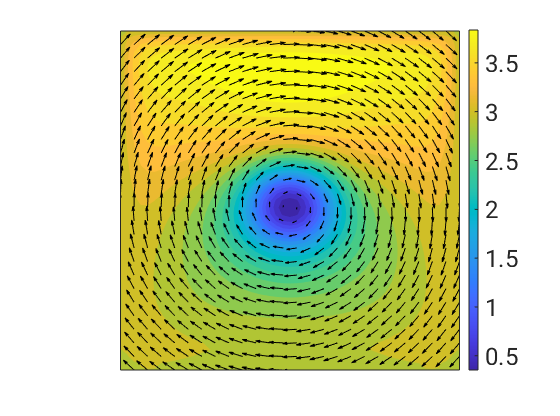}
    \\
    \vspace{2mm}
    \includegraphics[width=3.65cm]{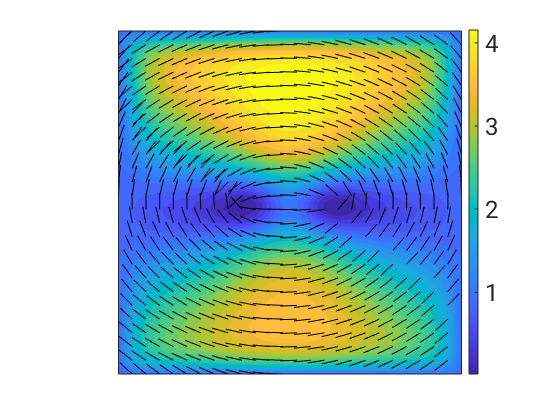}
    \includegraphics[width=3.65cm]{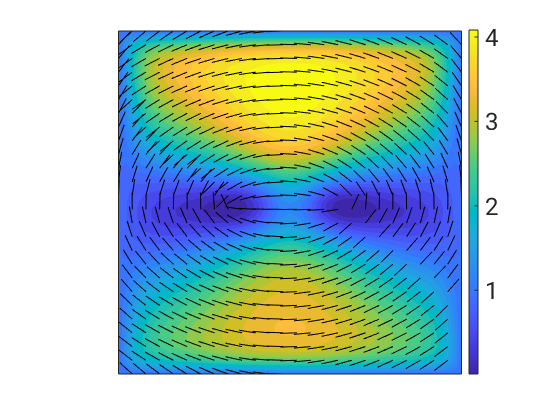}
    \includegraphics[width=3.65cm]{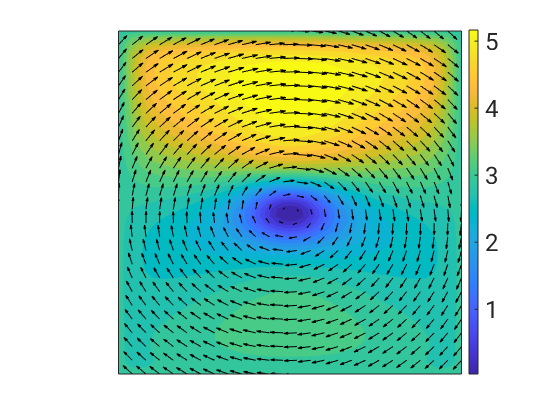}
    \includegraphics[width=3.65cm]{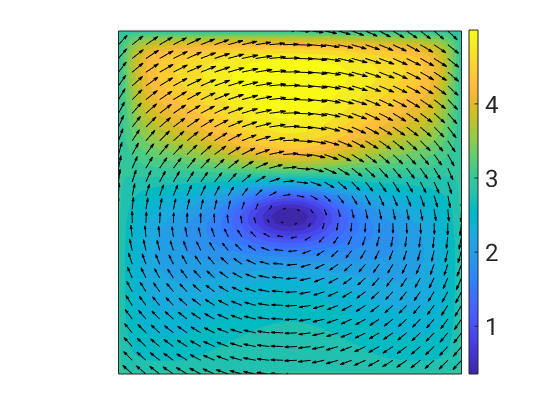}
    \\
    \vspace{2mm}
    \includegraphics[width=3.65cm]{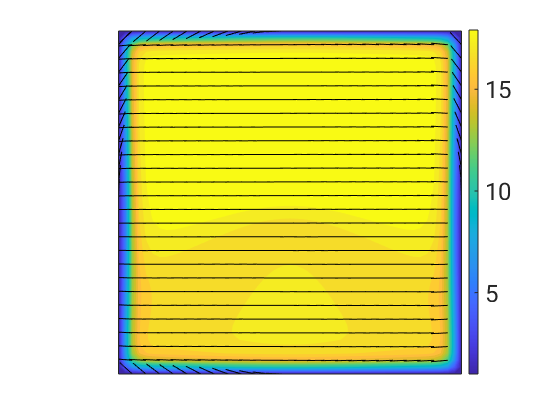}
    \includegraphics[width=3.65cm]{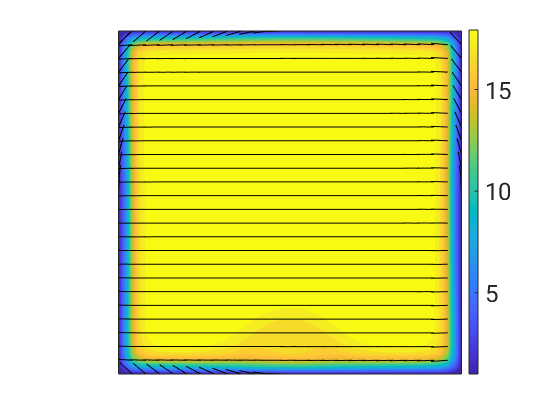}
    \includegraphics[width=3.65cm]{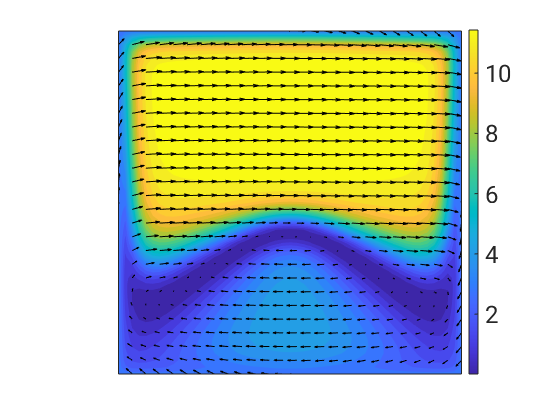}
    \includegraphics[width=3.65cm]{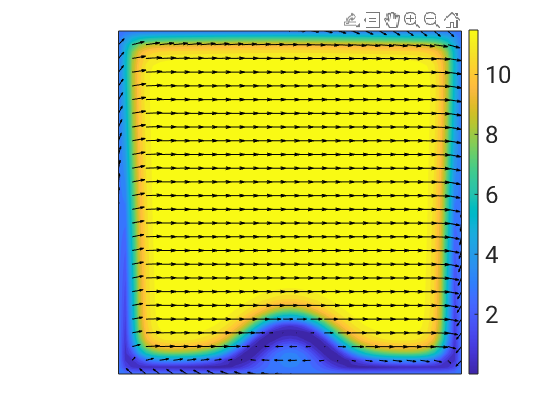}
    \caption{Nematic and magnetic configurations for different choices of $\Hvec_{ext}$, both with and without a stray field energy, i.e., $M_3\neq0$ and $M_3=0$. $\Hvec_{ext}=\left(0, 0, 0\right), \left(0.4, 0, 0\right), \left(0.875, 0, 0\right), \left(4, 0, 0\right)$, varying vertically downwards. 
    Parameter set: $l'_1=l_2=0.01$, $c_1=2$, $c_2=8$, $c_3=2$, $\xi=1$, $\eta_1=\eta_2=1$; $k=1$ and at time $t=0.01$.}
    \label{Fig2}
\end{figure}
In \Cref{Fig2}, we examine two cases with the following parameter values: $l'_1=l_2=0.01$, $c_1=2$, $c_2=8$, $c_3=2$, $\xi=1$ and $k=1$. The two cases differ on whether we include or exclude the $M_3$ component of the magnetization vector in the governing equations, boundary and initial conditions, recalling that the stray field energy is necessarily zero if $M_3=0$ everywhere. 
We work with four different constant external magnetic fields  $\textbf{H}_{ext}$: $\left(0, 0, 0\right), \left(0.4, 0, 0\right), \left(0.875, 0, 0\right)$ and $\left(4, 0, 0\right)$ ($|\Hvec_{ext}|$ is increasing as we move down the columns), with a view to study how external magnetic fields reorient ferronematic profiles and their defects.

In the first column of \Cref{Fig2}, we observe the reconstruction of the director field $\nvec$ as the external magnetic field $\Hvec_{ext}$ increases, under the assumption that $M_3=0$ everywhere. We notice the following trends: (i) when $\Hvec_{ext} = \left(0, 0, 0\right)$, the nematic director exhibits a clear $+1$ interior vortex or two $+1/2$ defects pinned together (distinguished by a blue region of low or almost zero $Q_{11}$ and $Q_{12}$) at the square centre, as dictated by the topology of the boundary datum; (ii) when $\Hvec_{ext} = \left(0.4, 0, 0\right)$, the core of the interior defect is elongated but the nematic director profile is largely unaltered; (iii) when $\Hvec_{ext}=\left(0.875, 0, 0\right)$, the nematic director exhibits significant reorientation in the $x$-direction and the $+1$-defect visibly splits into two $+1/2$ defects that are pushed towards the square edges (or the two almost pinned $+1/2$ defects move away from each other and move towards the square edges) and (iv) when $\Hvec_{ext}=\left(4, 0, 0\right)$, the nematic director is completely co-aligned with $\Hvec_{ext}$ and the defects are expelled from the interior entirely. These observations indicate that the external magnetic field reorients the nematic directors, as expected, and pushes interior defects towards the boundaries to facilitate the reorientation or realignment of the nematic directors with $\Hvec_{ext}$.

In the second column of \Cref{Fig2}, we observe the overall trends are similar to the case with $M_3=0$, but the non-zero $M_3$ and consequently, the non-zero stray field energy tend to enlarge the defect core sizes, split the defect core and push them towards the square edges more strongly compared to the $M_3=0$ case. As before, the differences between the $M_3=0$ and $M_3 \neq 0$ cases are negligible when $\Hvec_{ext} = (4,0,0)$. 
In the third and fourth columns of \Cref{Fig2}, we numerically compute $\Mvec$-profiles for the four different choices of $\Hvec_{ext}$, considering the two cases as above. The $\Mvec$-profile always exhibits a $+1$-degree interior vortex that is localised near the square centre, and the $\Mvec$-profile is roughly co-aligned with the nematic director away from the defects, since $c_1$ is positive. There is no defect splitting since non-orientable or fractional defects are not allowed in a vector model.  
On comparing the profiles with $M_3=0$ and $M_3 \neq 0$ respectively in each row (with the same value of $\Hvec_{ext}$), we deduce that the stray field energy enlarges or elongates the size of the central magnetic vortex and effectively creates a band of low order (labelled by low values of $|\Mvec|^2$) near the square centre as we move from the first to the third row. There is quite a jump between $\Hvec_{ext}=(0.875, 0,0)$ and $\Hvec_{ext}=(4,0,0)$ and we speculate that the band of low order migrates to the bottom edge as $|\Hvec_{ext}|$ increases and is eventually almost expelled from the interior. The expulsion is more pronounced when $M_3 \neq 0$, when we compare the third and fourth columns of the last row of \Cref{Fig2}. We notice a hump in $|\Mvec|$ near the bottom square edge in the last row; we speculate that this hump comes from memory of the magnetic vortex and the hump would disappear for larger values of $|\Hvec_{ext}|$; the hump is smaller and less pronounced when the stray field energy is included by means of the $M_3 \neq 0$ case in the fourth column of \Cref{Fig2}.  

In \Cref{Fig3}, we numerically compute the nematic and magnetization profiles for degree $k=2$ boundary conditions, with the following parameter values: $l'_1=l_2=0.01$, $c_1=2$, $c_2=8$, $c_3=2$ and $\xi=1$. The figures are presented in the same manner as in \Cref{Fig2}.  In the first column of \Cref{Fig3}, there are four distinct nematic profiles (ignoring the stray field energy): when (i) $\Hvec_{ext} = \left(0, 0, 0\right)$, the nematic director profile exhibits four non-orientable interior defects almost pinned together near the square centre and distinguished by a core of low nematic order of $|\Qvec|$; (ii) when $\Hvec_{ext} = \left(0.4, 0, 0\right)$, the defects move further apart and there is a splitting effect; (iii) when $\Hvec_{ext}=\left(0.875, 0, 0\right)$, we see two clear pairs of $+1/2$ defects and four bands of low order connecting each fractional defect to a square vertex; and (iv) when $\Hvec_{ext}=\left(4, 0, 0\right)$, the nematic directors are almost fully co-aligned with $\Hvec_{ext}$. We see some slight humps of relatively low order near the left and right edges of the fourth row and first column; these are signatures of the expelled interior nematic defects. In the second column of \Cref{Fig3}, the overall trends are the same but the nematic defect cores are larger when $M_3 \neq 0$ for $\Hvec_{ext} = (0,0,0)$ and $\Hvec_{ext} = (0.4,0,0)$ respectively. There is stronger repulsion between the four $+1/2$ defects in the third row, for $M_3 \neq 0$ as compared to the $M_3=0$ case. As before, there are no appreciable differences between the nematic director profiles for the first and second columns of the fourth row, with $\Hvec_{ext}=(4,0,0)$. We note that the total topological degree of the interior defects (in this case four $+1/2$ defects) must match the topological degree, $k=2$, of the boundary datum. 

\begin{figure}[ht!]
    \centering
    $\Qvec$$\left(M_3=0\right)$\hspace{1.79cm}$\Qvec$$\left(M_3\ne0\right)$\hspace{1.79cm}$\Mvec$$\left(M_3=0\right)$\hspace{1.79cm}$\Mvec$$\left(M_3\ne 0\right)$\\
    \includegraphics[width=3.65cm]{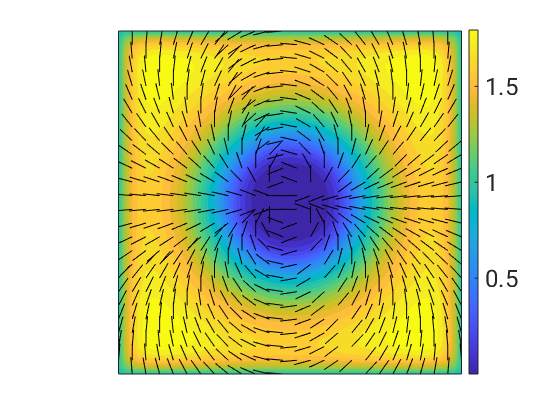}
    \includegraphics[width=3.65cm]{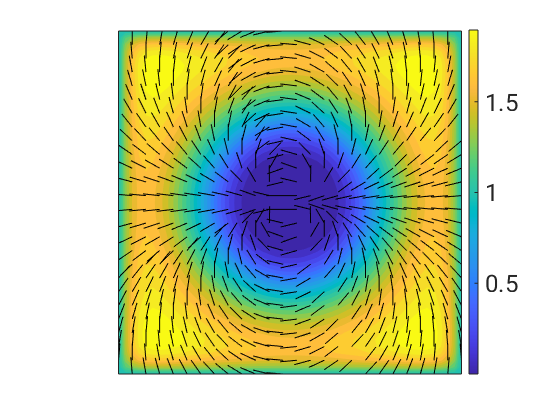}
    \includegraphics[width=3.65cm]{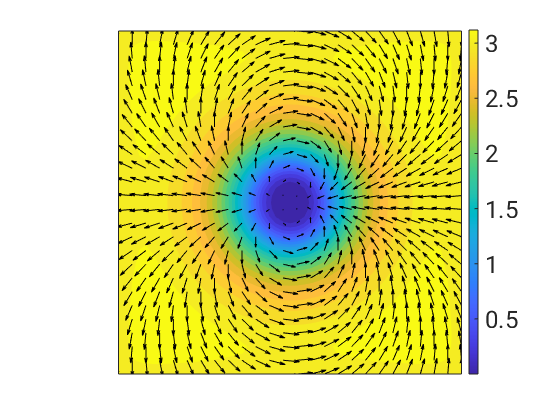}
    \includegraphics[width=3.65cm]{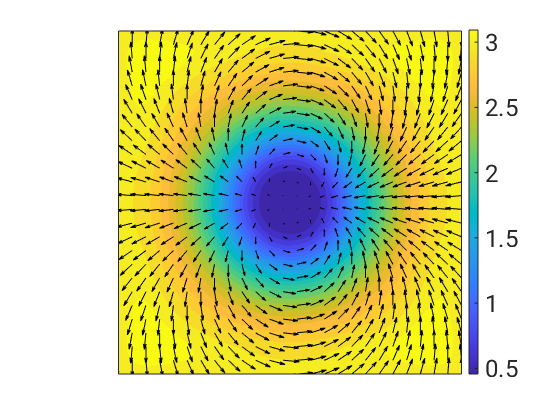}
    \\
    \vspace{2mm}
    \includegraphics[width=3.65cm]{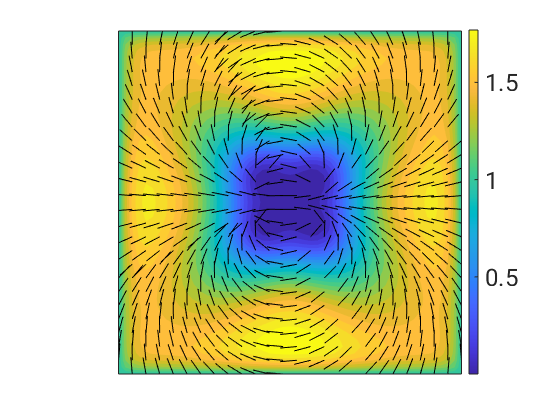}
    \includegraphics[width=3.65cm]{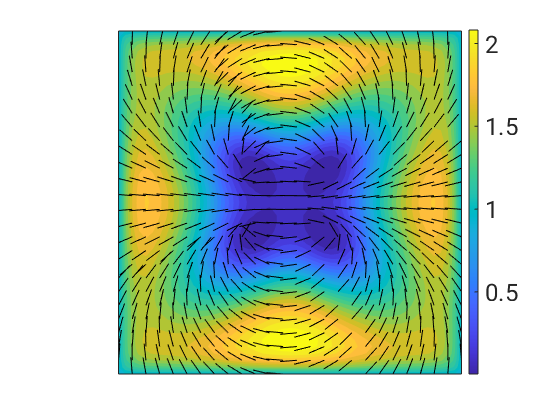}
    \includegraphics[width=3.65cm]{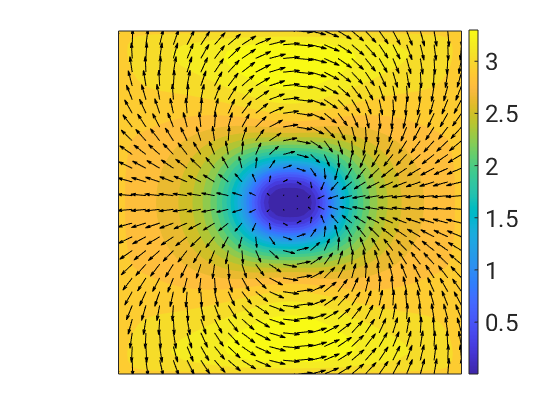}
    \includegraphics[width=3.65cm]{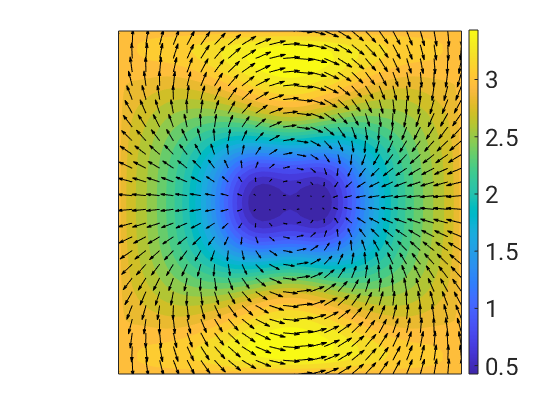}
    \\
    \vspace{2mm}
    \includegraphics[width=3.65cm]{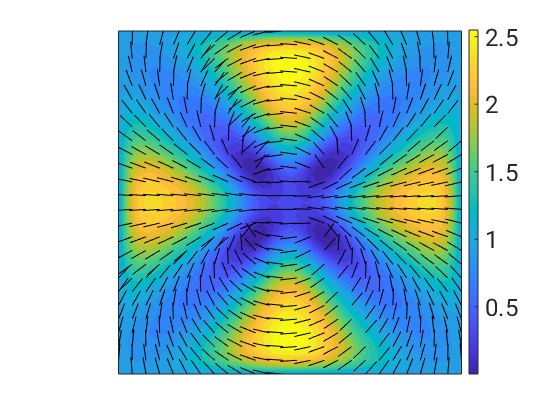}
    \includegraphics[width=3.65cm]{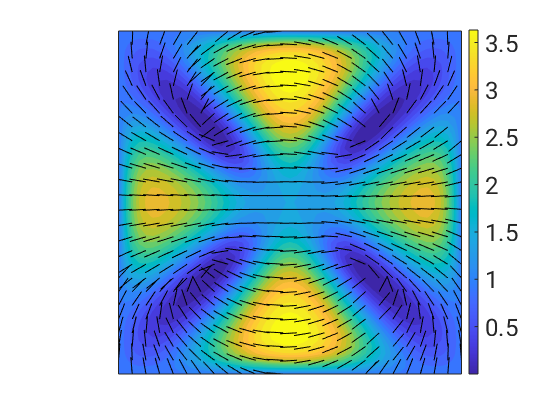}
    \includegraphics[width=3.65cm]{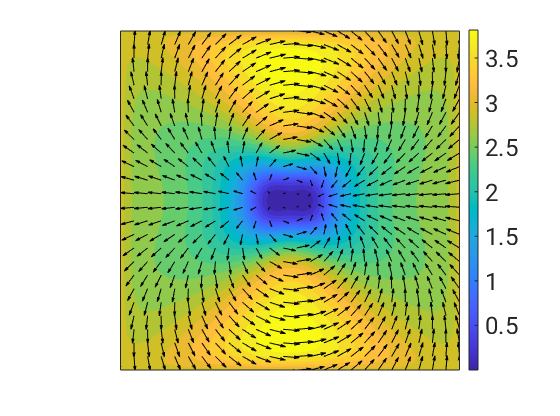}
    \includegraphics[width=3.65cm]{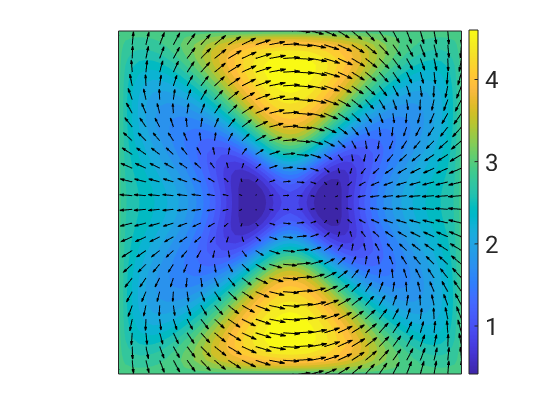}
    \\
    \vspace{2mm}
    \includegraphics[width=3.65cm]{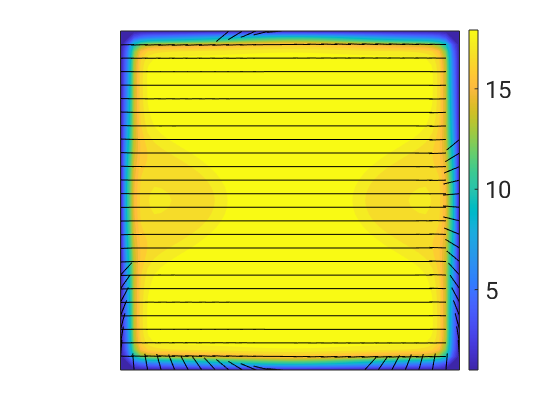}
    \includegraphics[width=3.65cm]{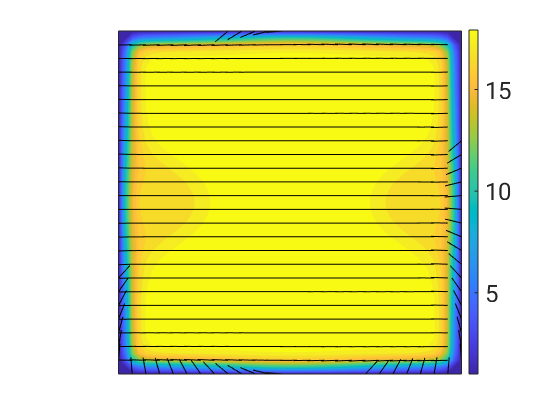}
    \includegraphics[width=3.65cm]{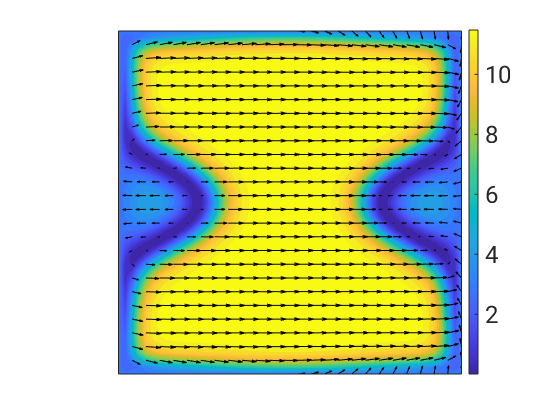}
    \includegraphics[width=3.65cm]{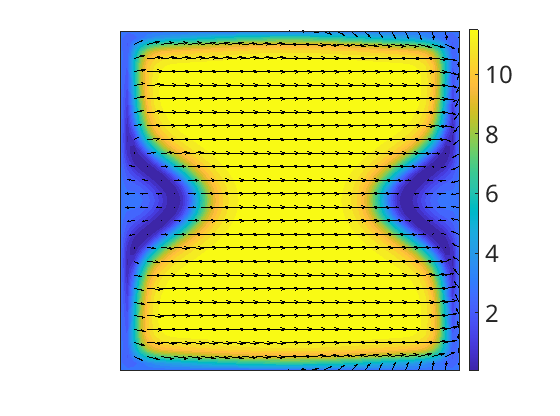}
    \caption{Nematic and magnetic configurations for different choices of $\Hvec_{ext}$, both with and without a stray field energy, i.e., $M_3\neq0$ and $M_3=0$. $\Hvec_{ext}=\left(0, 0, 0\right), \left(0.4, 0, 0\right), \left(0.875, 0, 0\right), \left(4, 0, 0\right)$, varying vertically downwards. Parameter set: $l'_1=l_2=0.01$, $c_1=2$, $c_2=8$, $c_3=2$, $\xi=1$, $\eta_1=\eta_2=1$; $k=2$ and at time $t=0.01$.}
    \label{Fig3}
\end{figure}

In the third and fourth columns of \Cref{Fig3}, we numerically compute the $\Mvec$-profiles for the two cases (without and with stray field energy). We observe four distinct types of $\Mvec$-profiles: (i) when $\Hvec_{ext} = \left(0, 0, 0\right)$, there is a distinct interior central magnetc vortex of degree $+2$, consistent with the imposed boundary condition and initial condition, and the core is larger for $M_3 \neq 0$ than the $M_3=0$ case. (ii) When $\Hvec_{ext} = \left(0.4, 0, 0\right)$, the vortex is smeared out for $M_3=0$ but splits into two central interior vortices for $M_3 \neq 0$; (iii) when $\Hvec_{ext} = \left(0.875, 0, 0\right)$, there is a general tendency for $\Mvec$ to realign with $\Hvec_{ext}$ and we observe bands of low order connecting the magnetic vortices to the square vertices when $M_3 \neq 0$. The magnetic vortices tend to move towards the square edges as $|\Hvec_{ext}|$ increases and this tendency is more pronounced when $M_3 \neq 0$. (iv) When $\Hvec_{ext} = \left(4, 0, 0\right)$, $\Mvec$ is almost perfectly aligned with $\Hvec_{ext}$ and we observe the two signature humps near the left and right square edges. These are the memories of the expelled magnetic vortices. As before, we do not get fractional magnetic vortices and consequently, we get splitting into two magnetic vortices (of degree $+1$) as opposed to four nematic vortices (of degree $+1/2$). More precisely, if we interpret $\Qvec=(Q_{11}, Q_{12})$ as a two-dimensional vector, then the degree of the nematic vortex is half the degree of the $\Qvec$-vector.

Thus, we deduce that stray field energies enlarge, split and repel interior nematic and magnetic defects towards the square edges and therefore co-operate with the external magnetic field to promote the co-alignment of $\nvec$ and $\Mvec$ with $\Hvec_{ext}$. There are parallels with the experimental evidence in \cite{Shui}, which reports the migration of nematic defects and magnetic vortices in the BF/BuOH ferronematic phase, under an activated magnetic field, in some circumstances. Moreover, the shape and size of the core of magnetic vortices are observed to depend on the stray field, which agrees with the report  for a pure ferromagnetic substance as mentioned in \cite{wachowiak2002direct}. 

\subsubsection{Role of the parameter \texorpdfstring{$\xi$}{xi}}\label{role of parameter chi}

We study the ferronematic solutions for four different values of the parameter $\xi$: $\xi=0.025, 0.25, 0.5, 2.5$ with fixed $\Hvec_{ext}=\left(0.4, 0, 0\right)$ in \Cref{Fig4}. Also, we fix the parameters $l_1'=l_2=0.01$, $c_1=2$, $c_2=8$, $c_3=2$ and choose $k=2$, for comparison with \Cref{Fig3}. 
\begin{figure}[ht!]
    \centering
     $\Qvec$$\left(M_3=0\right)$\hspace{1.79cm}$\Qvec$$\left(M_3\ne0\right)$\hspace{1.79cm}$\Mvec$$\left(M_3=0\right)$\hspace{1.79cm}$\Mvec$$\left(M_3\ne 0\right)$\\
    \includegraphics[width=3.65cm]{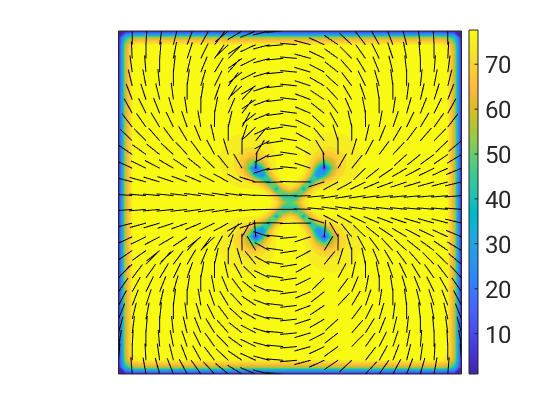}
    \includegraphics[width=3.65cm]{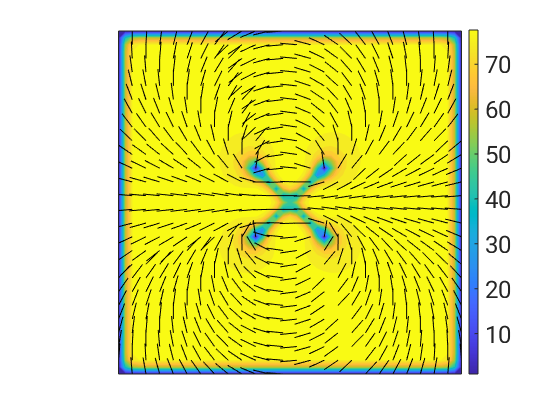}
    \includegraphics[width=3.65cm]{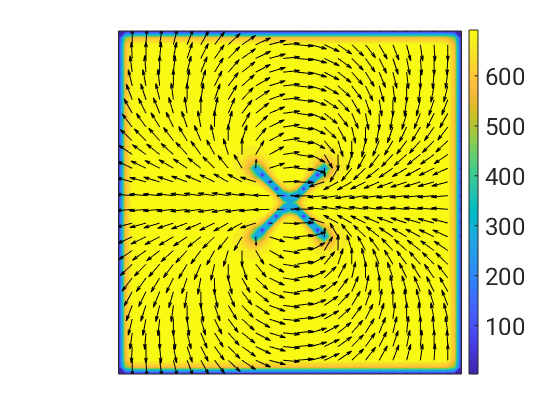}
    \includegraphics[width=3.65cm]{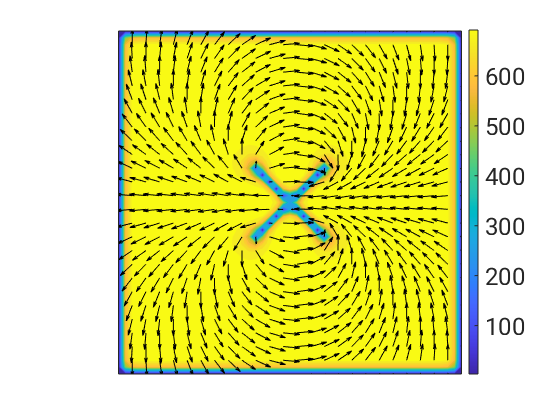}
    \\
    \vspace{2mm}
    \includegraphics[width=3.65cm]{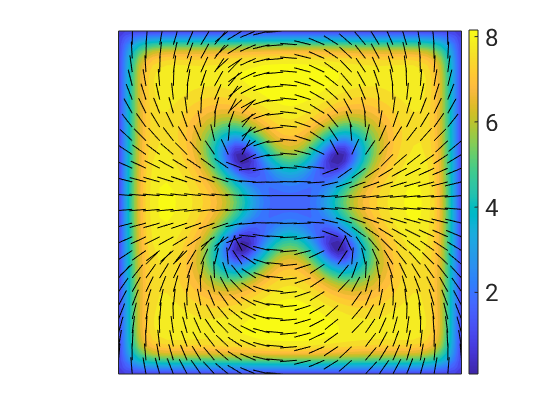}
    \includegraphics[width=3.65cm]{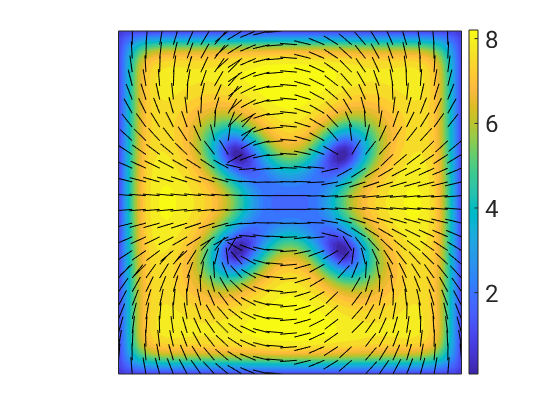}
    \includegraphics[width=3.65cm]{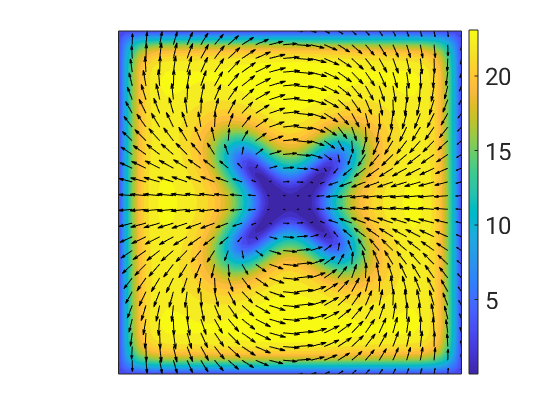}
    \includegraphics[width=3.65cm]{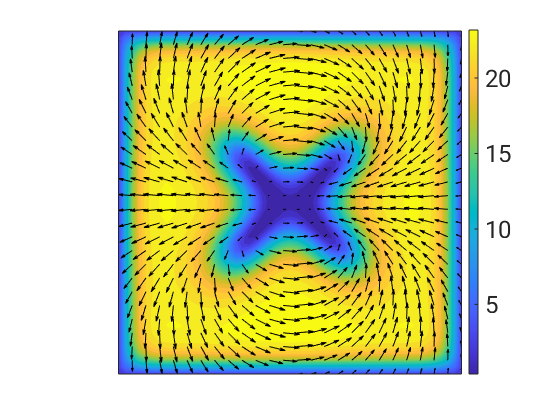}
    \\
    \vspace{2mm}
    \includegraphics[width=3.65cm]{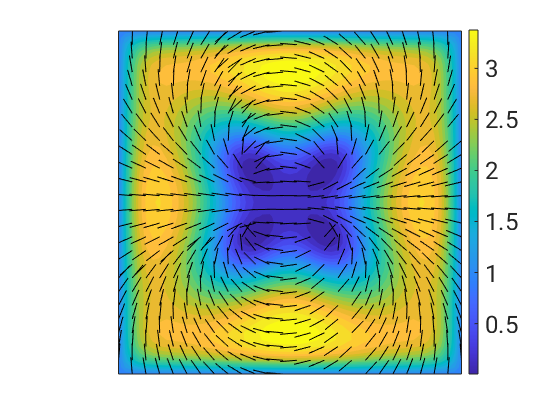}
    \includegraphics[width=3.65cm]{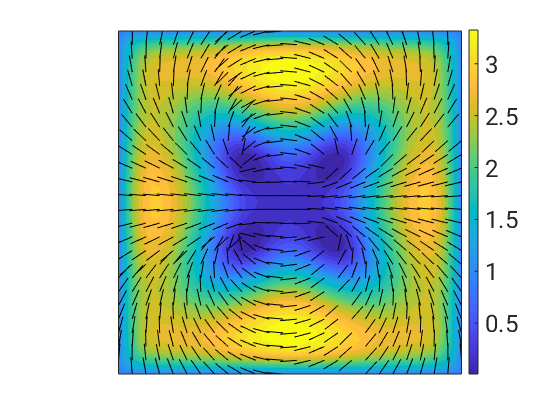}
    \includegraphics[width=3.65cm]{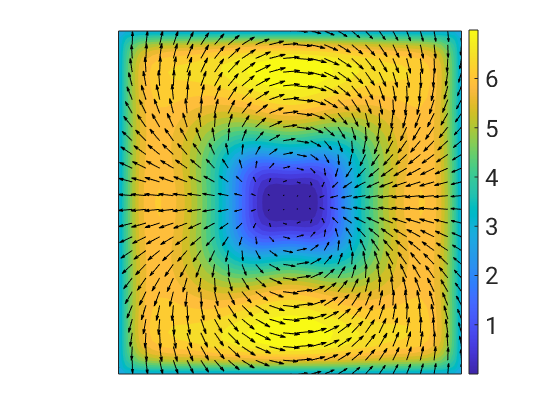}
    \includegraphics[width=3.65cm]{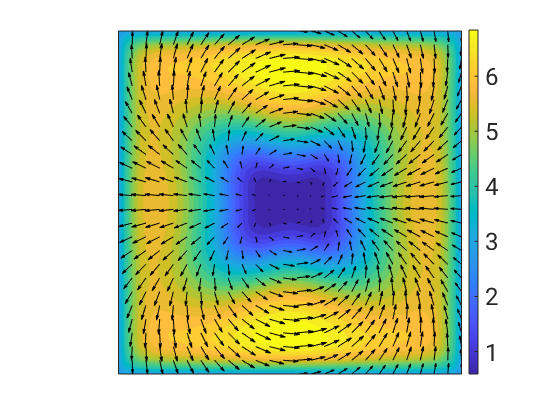}
    \\
    \vspace{2mm}
    \includegraphics[width=3.65cm]{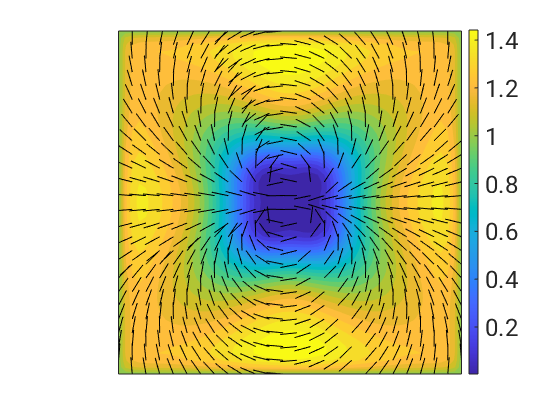}
    \includegraphics[width=3.65cm]{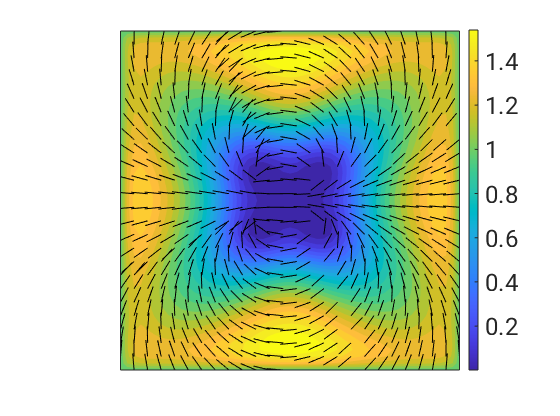}
    \includegraphics[width=3.65cm]{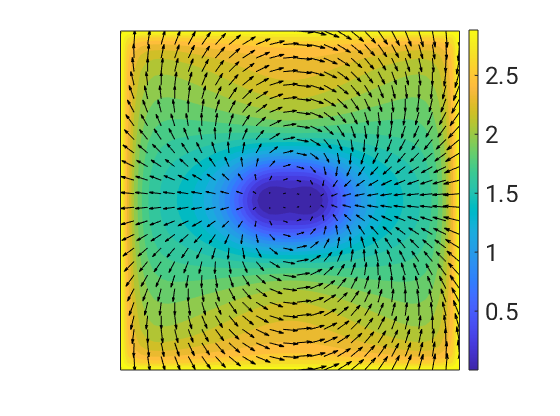}
    \includegraphics[width=3.65cm]{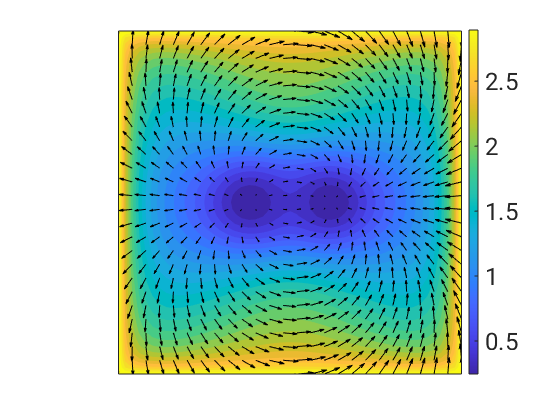}
    \caption{Nematic and magnetic configurations with $\Hvec_{ext}=\left(0.4, 0, 0\right)$ and different $\xi=0.025, 0.25, 0.5, 2.5$, varying vertically downwards, both with and without a stray field energy i.e., $M_3\neq0$ and $M_3=0$. 
    Parameter set: $l'_1=l_2=0.01$, $c_1=2$, $c_2=8$, $c_3=2$, $\eta_1=\eta_2=1$; $k=2$ and at time $t=0.01$.}
    \label{Fig4}
\end{figure}
The parameter $\xi$ increases as we move vertically downwards between the rows. The external magnetic field is relatively weak and hence, the co-alignment effects with the external magnetic field are relatively weak for the nematic director and magnetization profiles respectively. We summarise the main trends for both the $M_3=0$ and $M_3 \neq 0$ cases. For small $\xi$, the magnetic energies are weak compared to their nematic counterparts. We see a cross of low order (relatively small values of $|\Mvec|$) in the $\Mvec$-profile, tailored by the four fractional point defects in the nematic director, localised near the centre. We notice the co-alignment of $\nvec$ and $\Mvec$ prominently, since the coupling parameter $c_1= 2 = 80 * 0.025 = 80 \xi$ in the first row and $c_1 = 2 = 8 * 0.25= 8 \xi$ in the second row. As $\xi$ increases, the magnetic energies become more dominant and this can be seen in the third row with $\xi =0.5$. The magnetic energy, particularly the Ginzburg-Landau part given by $\xi l_2 \frac{|\nabla \Mvec|^2}{2} + \frac{\xi}{4}\left(|\Mvec|^2 - 1 \right)^2$, favours magnetic vortices of integer degrees \cite{bethuel1994ginzburg}. Hence, we see the magnetic bands of low order coalescing and converging to two magnetic vortices of degree $+1$ each, similar to the second row of \Cref{Fig3}. The nematic directors and magnetization vectors prefer to be co-aligned or follow each other, as dictated by the positive nemato-magnetic coupling parameter $c_1$. Consequently, we see four distinct fractional nematic defects localised near the square centre, in the third and fourth rows of \Cref{Fig4}. Therefore, the primary effect of increasing $\xi$ is to induce the creation of $k$ magnetic vortices of degree $+1$, near the square centre, as preferred by the magnetic energies and dictated by the topological degree $k$ of the boundary datum. This in turn induces the creation of $2k$ fractional interior nematic defects near the square centre. When $M_3 \neq 0$, the defects tend to be further apart since the stray field energy induces repulsion between defects as previously noted. We also comment on the color bars in \Cref{Fig4}. We note that the maximum values of $|\Qvec|$ and $|\Mvec|$ for the numerically computed solutions of \eqref{eq:26} decrease with increasing $\xi$ and this is consistent with the $L^\infty$ bounds derived in Lemma~\ref{lem1}.  

\subsubsection{Role of the coupling parameter \texorpdfstring{$c_1$}{c1}}\label{role of parameter c1}
We fix the parameters $l_1'=l_2=0.01$, $c_2=8$, $c_3=2$ and take $k=2$ for the Dirichlet boundary condition, for a fixed $\Hvec_{ext} = (0.4,0,0)$. We consider four different values of $c_1=1, 2, 3$ and $5$, as we move from the first row to the fourth row respectively in \Cref{Fig5}. \begin{figure}[ht!]
    \centering
    $\Qvec$$\left(M_3=0\right)$\hspace{1.79cm}$\Qvec$$\left(M_3\ne0\right)$\hspace{1.79cm}$\Mvec$$\left(M_3=0\right)$\hspace{1.79cm}$\Mvec$$\left(M_3\ne 0\right)$\\
    \includegraphics[width=3.65cm]{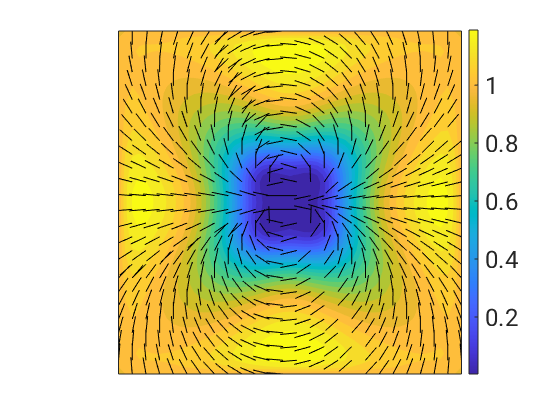}
    \includegraphics[width=3.65cm]{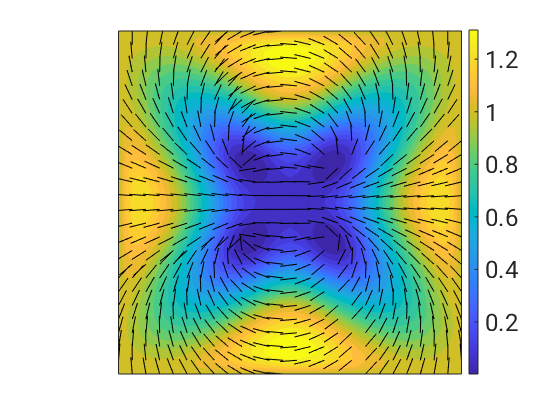}
    \includegraphics[width=3.65cm]{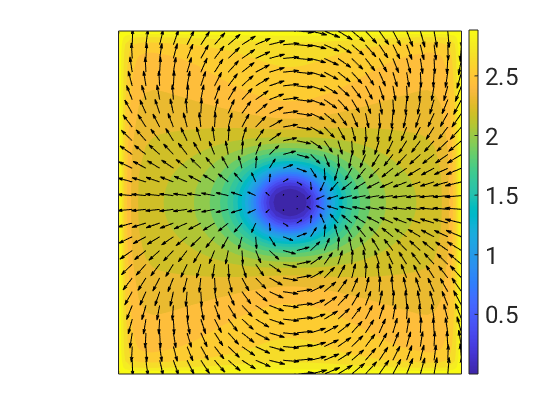}
    \includegraphics[width=3.65cm]{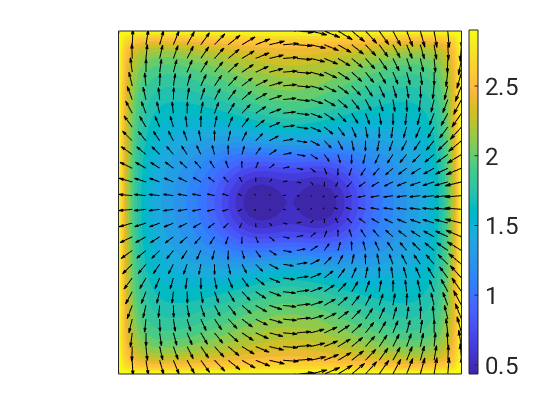}
    \\
    \vspace{2mm}
    \includegraphics[width=3.65cm]{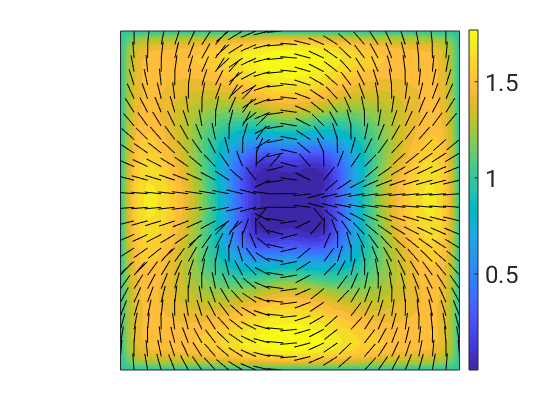}
    \includegraphics[width=3.65cm]{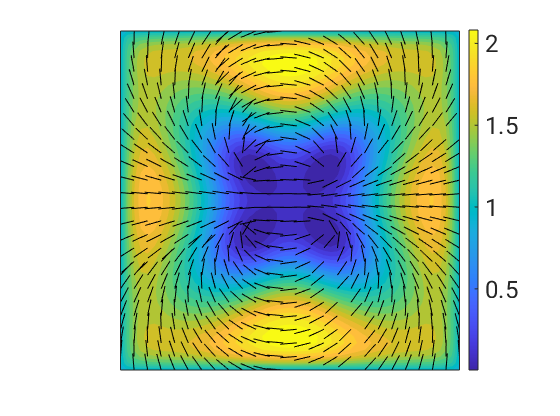}
    \includegraphics[width=3.65cm]{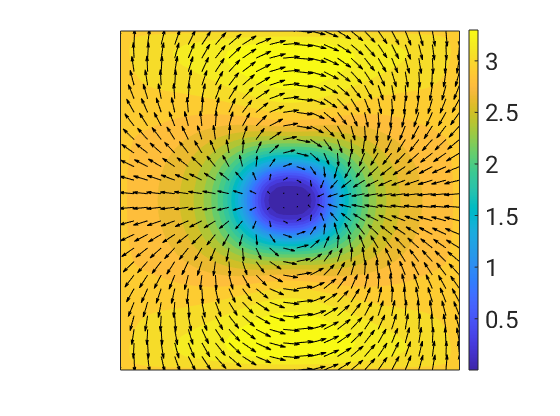}
    \includegraphics[width=3.65cm]{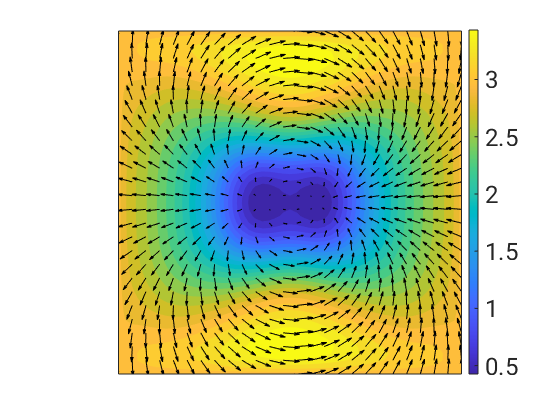}
    \\
    \vspace{2mm}
    \includegraphics[width=3.65cm]{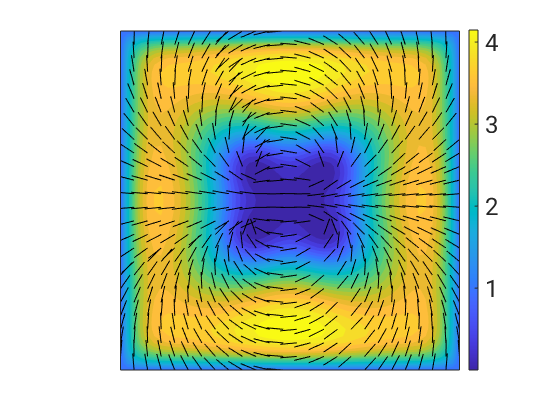}
    \includegraphics[width=3.65cm]{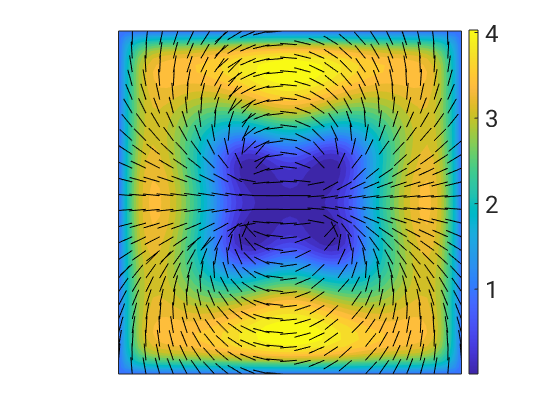}
    \includegraphics[width=3.49cm]{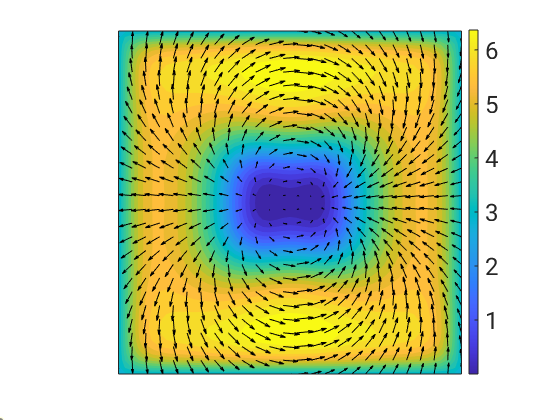}
    \includegraphics[width=3.65cm]{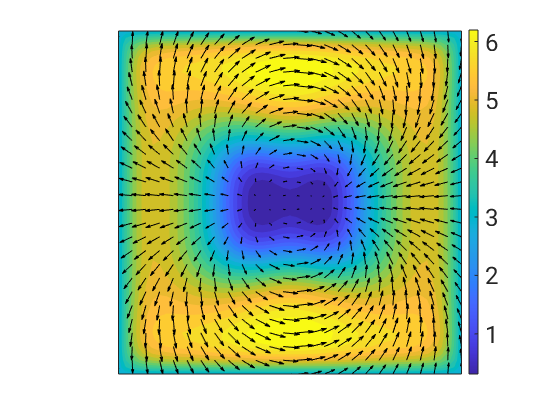}
    \\
    \vspace{2mm}
    \includegraphics[width=3.65cm]{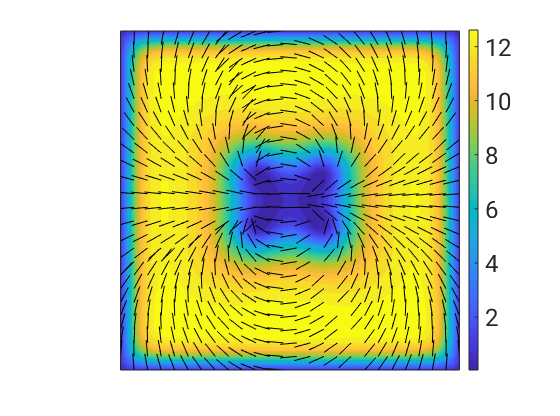}
    \includegraphics[width=3.65cm]{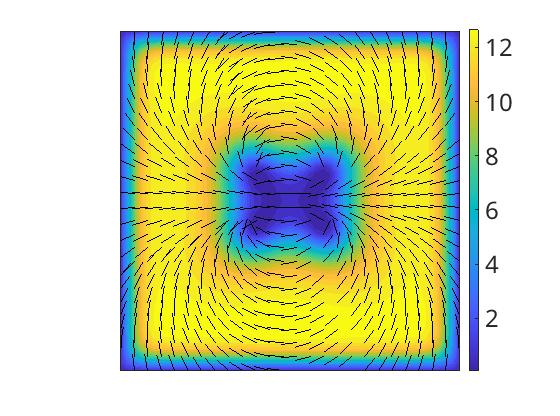}
    \includegraphics[width=3.65cm]{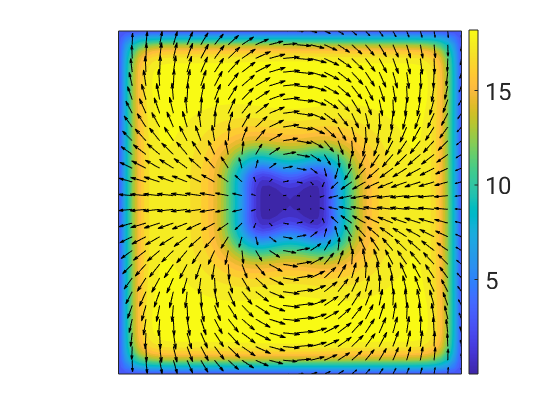}
    \includegraphics[width=3.65cm]{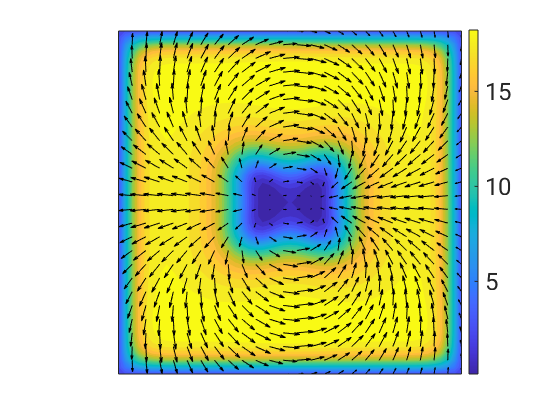}
     \caption{Nematic and magnetic configurations at $\Hvec_{ext}=\left(0.4, 0, 0\right)$ and different $c_1=1, 2, 3, 5$, varying vertically downwards, both with and without a stray field energy i.e., $M_3\neq0$ and $M_3=0$. 
     Parameter set: $l'_1=l_2=0.01$, $\xi=1$, $c_2=8$, $c_3=2$, $\eta_1=\eta_2=1$; $k=2$ and at time $t=0.01$.}
    \label{Fig5}
\end{figure}
As before, we consider the cases $M_3=0$ and $M_3 \neq 0$ (including stray field energy) separately. The external magnetic field is relatively weak and we do not see strong realignment of the nematic director and magnetization vector with $\Hvec_{ext}$ in all four cases. The effects of the stray field is to primarily split defects and push them away from each other, although we do not observe expulsion of interior defects since the external magnetic field is relatively weak. The primary effect of increasing $c_1$ is enhanced co-alignment between $\nvec$ and $\Mvec$ everywhere, including near defect cores. This is most evident by comparing the first and fourth rows of \Cref{Fig5}. For $M_3=0$ and $c_1=1$ (weak nemato-magnetic coupling), there are four almost pinned $+1/2$ nematic defects accompanied by a smaller magnetic defect core (which could be two $+1$-magnetic vortices almost superimposed on each other). The co-alignment between $\nvec$ and $\Mvec$ is much weaker near defects or regions of small $|\Qvec|$ and $|\Mvec|$. For $c_1=5$, there are $4$ clearly separated but centrally localised $+1/2$ nematic defects and $\Mvec$ exhibits a clear dipole pair of $+1$-magnetic vortices near the nematic defect locations. The nematic and magnetic defects may differ in their topology but the co-alignment between $\nvec$ and $\Mvec$ is much stronger even near defects, with $c_1=5$. The co-alignment effects are more pronounced when $M_3 \neq 0$, if we compare the second and fourth columns of the first and fourth rows respectively. In the first row, when $M_3 \neq 0$, there are four well-separated but localised nematic defects accompanied by two distinct $+1$-magnetic vortices in the $\Mvec$-profile. The nematic defects move closer together as $c_1$ increases (even with $M_3 \neq 0$) and the effects of the stray field are far less pronounced in the fourth row with $c_1=5$, when the co-alignment effects dominate the stray field energy and external magnetic field energy-induced effects. We note that the effect of increasing $c_1$ is comparable to the effect of decreasing $\xi$ or diminishing the importance of the magnetic energies. This is also reflected in the $L^\infty$ bounds for the solutions of \eqref{EL:1}--\eqref{EL:5}.

\subsubsection{Role of the parameter \texorpdfstring{$c_3$}{c3}}
We fix the parameters $l'_1=l_2=0.01$, $c_1=2$, $c_2=8$ and take $k=2$ in \Cref{Fig6}, with $\Hvec_{ext}=\left(0.4, 0, 0\right)$ and $c_3=2.5, 5, 10$ and $20$ in \Cref{Fig6} ($c_3$ is increasing vertically downwards). \begin{figure}[ht!]
    \centering
    $\Qvec$$\left(M_3=0\right)$\hspace{1.79cm}$\Qvec$$\left(M_3\ne0\right)$\hspace{1.79cm}$\Mvec$$\left(M_3=0\right)$\hspace{1.79cm}$\Mvec$$\left(M_3\ne 0\right)$\\
    \includegraphics[width=3.65cm]{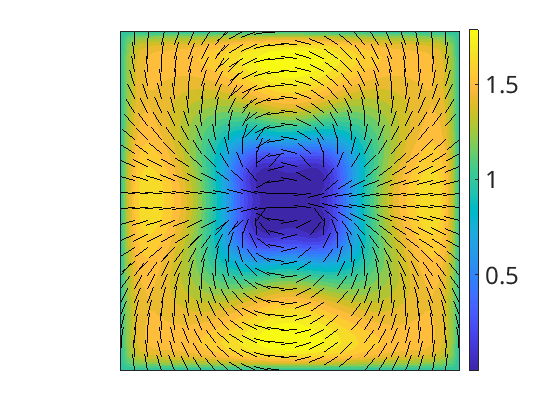}
    \includegraphics[width=3.65cm]{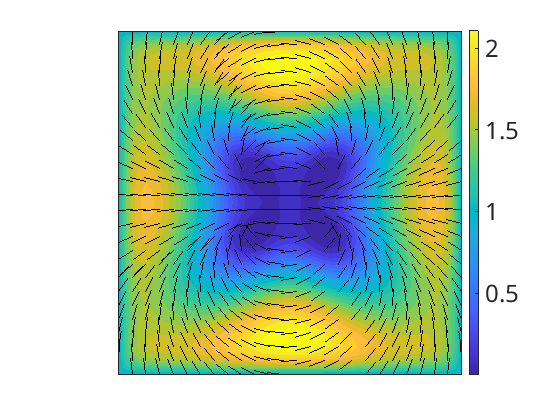}
    \includegraphics[width=3.65cm]{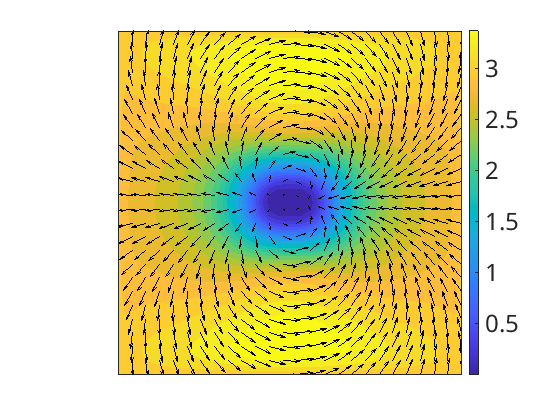}
    \includegraphics[width=3.65cm]{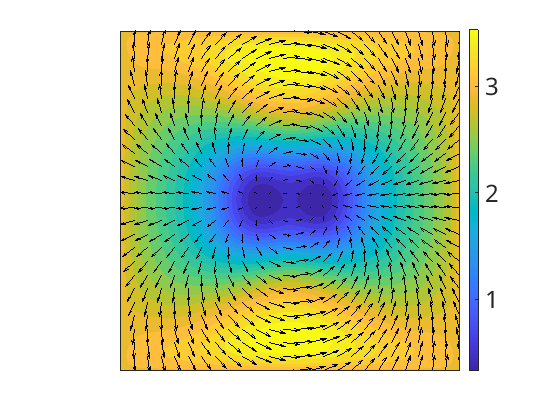}
    \\
    \vspace{2mm}
    \includegraphics[width=3.65cm]{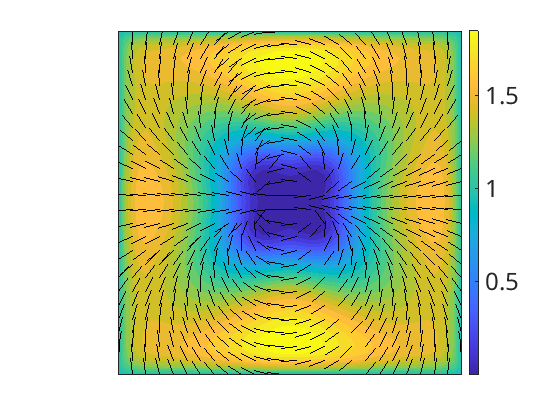}
    \includegraphics[width=3.65cm]{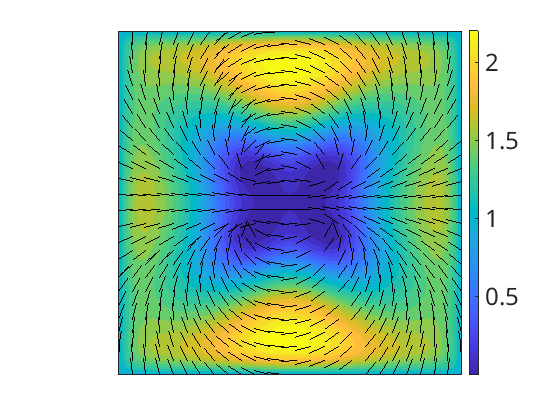}
    \includegraphics[width=3.65cm]{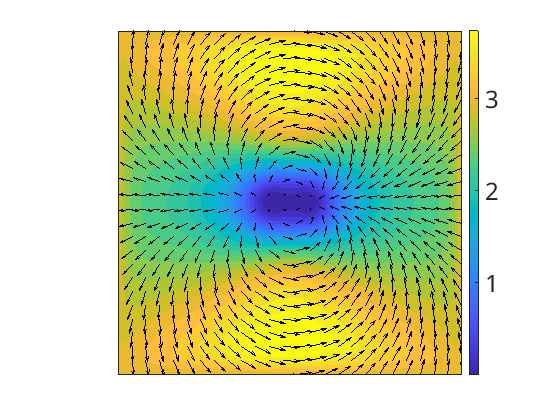}
    \includegraphics[width=3.65cm]{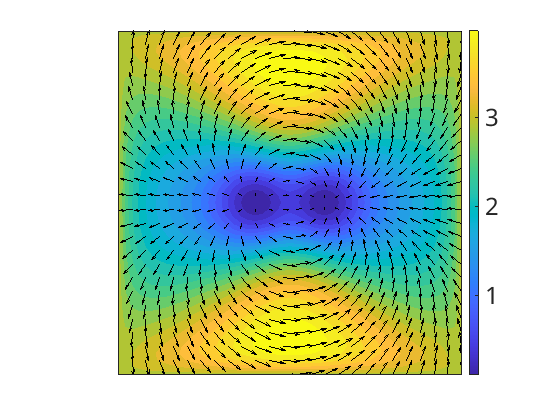}
    \\
    \vspace{2mm}
    \includegraphics[width=3.65cm]{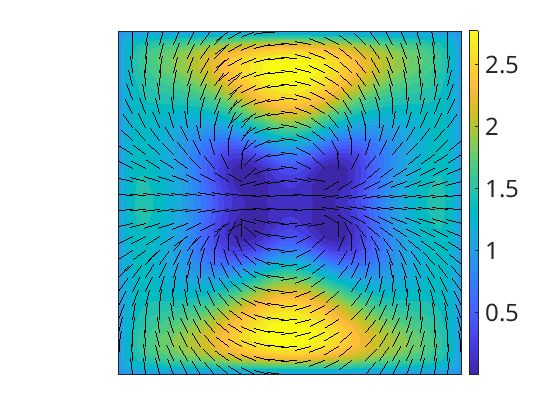}
    \includegraphics[width=3.65cm]{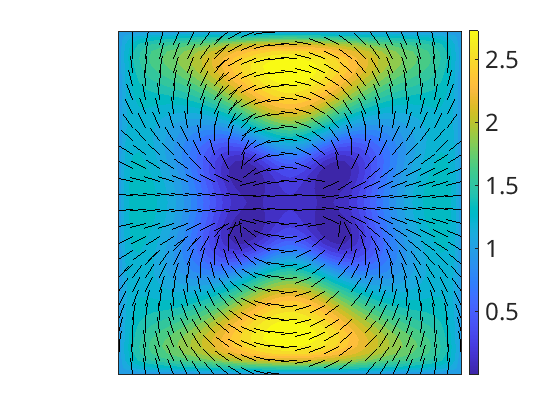}
    \includegraphics[width=3.65cm]{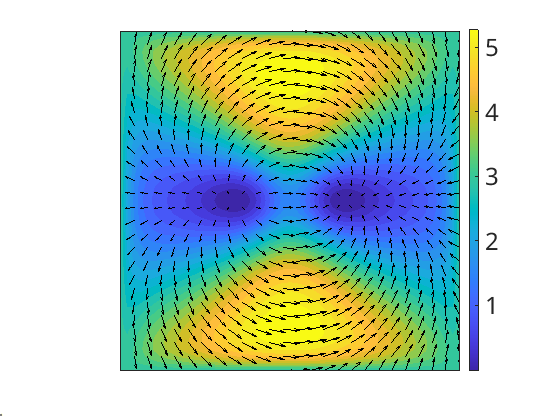}
    \includegraphics[width=3.65cm]{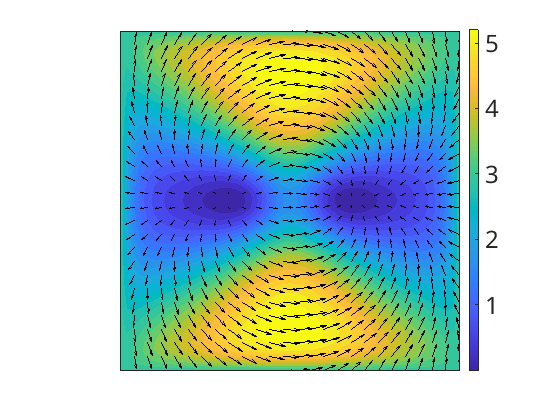}
    \\
    \vspace{2mm}
    \includegraphics[width=3.65cm]{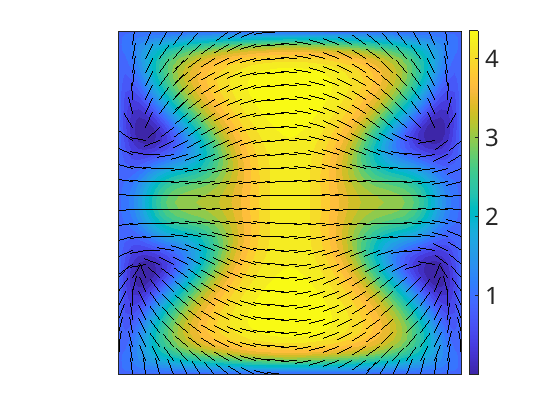}
    \includegraphics[width=3.65cm]{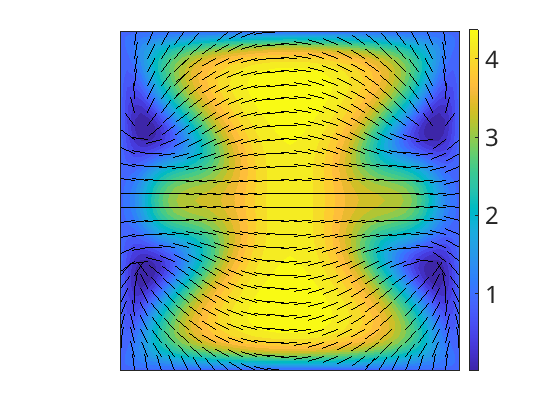}
    \includegraphics[width=3.65cm]{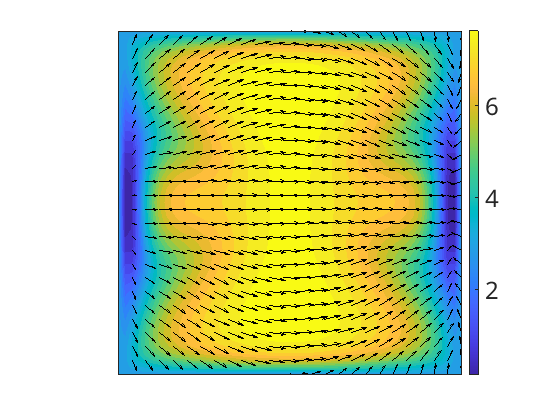}
    \includegraphics[width=3.65cm]{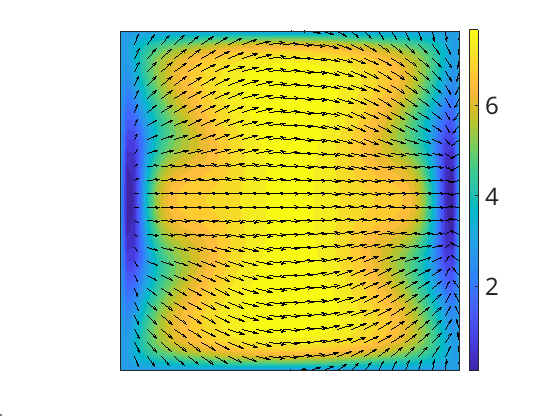}
     \caption{Nematic and magnetic configurations with $\Hvec_{ext}=\left(0.4, 0, 0\right)$ and different choices of $c_3=2.5, 5, 10, 20$, varying vertically downwards, both with and without a stray field energy, i.e., $M_3\neq0$ and $M_3=0$. 
     Parameter set: $l'_1=l_2=0.01$, $\xi=1$, $c_1=2$, $c_2=8$, $\eta_1=\eta_2=1$; $k=2$ and at time $t=0.01$.}
    \label{Fig6}
\end{figure}
As before, we consider the $M_3=0$ and $M_3 \neq 0$ cases separately. The effect of $c_3$ is clear - increasing $c_3$ enhances the effects of the stray field energy and the Zeeman energy. Consequently, we get almost complete co-alignment of $\nvec$ and $\Mvec$ with $\Hvec_{ext} = (0.4,0,0)$ and $c_3=20$. There are memories of the expelled nematic and interior defects near the left and right square edges. Thus, increasing $c_3$ promotes co-alignment of $\nvec$ and $\Mvec$ with the external magnetic field and consequently, also promotes the repulsion of defects and their migration to the boundaries. Therefore, if $c_3$ could be an experimentally tunable parameter, one could observe complete co-alignment of $\nvec$, $\Mvec$ and $\Hvec_{ext}$ with relatively low values of $|\Hvec_{ext}|$, which is of some practical importance. 

These numerical results clearly demonstrate how external magnetic fields and stray fields affect the multiplicity of defects, their locations, their repulsion and expulsion along with the nematic director and magnetization profiles. It is clear that the ferronematic solution landscape is much richer with magnetic fields, than without magnetic fields, and this naturally opens many doors for applications.

\section{Conclusions}\label{sec:6}
\label{sec:conclusions}
Our study is based on a ferronematic energy that comprises contributions from the Landau-de Gennes' theory of nematic liquid crystals and the theory of micromagnetics. In particular, we include a stray field energy contribution and energy contributions from external magnetic fields, and hence generalize the ferronematic model in \cite{Bisht2020}. Our numerical results concerning the influence of the stray field and the external magnetic field on ferronematics in two-dimensional geometries 
suggest that additional magnetic nanoparticles  can substantially affect observable nematic profiles: their director profiles, defect sets, stability etc.\ and hence, they can affect concomitant optical, mechanical and rheological properties too. 
In this context, we refer to the experimental works \cite{chen1983observation, Mertelj} that report that optical properties of manufactured ferronematics can be instantly controlled by using a weak strength external magnetic field (up to 1 mT); under an alternating magnetic field, such ferronematics exhibit an optical switching frequency above 100 Hz, which is comparable to commercial liquid crystal devices based on electrical switching. Thus, the inclusion of the stray field in our study will be more effective for such applications that demand weak-strength external magnetic fields. 

However, our observations are of qualitative nature.
For example, we have not used physically motivated values for the parameters $l_1'=2l_1, l_2, c_1, c_2, c_3, \xi$ in our numerical results.  
Such values are for instance reported in 
\cite{luo2012, newtonmottram, Zumar2009}. 
We do not comment on this further here because our purpose is to illustrate the rich solution landscapes of ferronematics in two dimensions and give examples of how these landscapes can be tuned by means of $\xi, c_1, c_2, c_3$ and the re-scaled elastic constants.

There are open questions about the choice of physically pertinent boundary conditions for $\Mvec$. Dirichlet conditions for $\Qvec$ are widely used in theoretical studies and are physically relevant e.g., \cite{luo2012, han2021tailored, tsakonas2007, wangprl}. However, it may not be experimentally feasible or practical to prescribe $\Mvec$ on the boundary, especially when $\Mvec$ is induced by suspended magnetic nanoparticles. In such cases, natural or Neumann boundary conditions for $\Mvec$ might be more appropriate, with $M_3=0$. For example, natural boundary conditions for $\Mvec$ on a square domain would reduce to
$
\frac{\partial \Mvec}{\partial x} = 0$ on $x=0,1
$
and
$
\frac{\partial \Mvec}{\partial y} =0$ on $y=0,1.
$
For Neumann boundary conditions, we conjecture that the effects of geometric frustration would be lost for the $\Mvec$ profiles and the $\Mvec$ profiles would be largely tailored by $\Qvec$ profiles and the external fields. For example, we might still get the interior magnetic defects induced by the interior nematic defects in Figures~\ref{Fig5} and \ref{Fig6}, but the topological degrees of the magnetic defects and their locations might be different for Neumann as opposed to Dirichlet boundary conditions for $\Mvec$. This is simply because the system is less constrained with Neumann boundary conditions for $\Mvec$. We do not comment further but acknowledge that there are several physically relevant generalisations of our work. 

Our mathematical and numerical study of a complex multi-physics system with multiple order parameters yields mathematical and some practical insight into the relevance of stray field energies and external magnetic fields for quasi two-dimensional ferronematics that can contribute to future experimental and theoretical studies in the same vein.

\section{Declaration} \sloppy S.D.: conceptualization, formal analysis, investigation, methodology, software, validation, writing-original draft, writing-review and editing, funding acquisition; J.D.: methodology, software, validation, writing-review and editing; A.M.: conceptualization, investigation, methodology, validation, supervision, writing-review and editing, funding acquisition; A.S.: conceptualization, investigation, methodology, validation, supervision, writing-review and editing.

\section*{Acknowledgments}
S.D. is grateful to DAAD (German Academic Exchange Service): The work was made possible with the support of a scholarship from
the German Academic Exchange Service (DAAD). A.M. gratefully acknowledges support from the Leverhulme Research Project Grant RPG-2021-401, a Humboldt Foundation Friedrich Wilhelm Bessel Research Award and a University of Strathclyde GEF Horizon Europe Faculty award STR1005-377.

\bibliographystyle{unsrtnat}  
\bibliography{references}      

\begin{thebibliography}{40}
\providecommand{\natexlab}[1]{#1}
\providecommand{\url}[1]{\texttt{#1}}
\expandafter\ifx\csname urlstyle\endcsname\relax
  \providecommand{\doi}[1]{doi: #1}\else
  \providecommand{\doi}{doi: \begingroup \urlstyle{rm}\Url}\fi

\bibitem[de~Gennes and Prost(1995)]{Gennes}
Pierre~Gilles de~Gennes and Jacques Prost.
\newblock The physics of liquid crystals.
\newblock \emph{International series of monographs on physics}, 2, 1995.

\bibitem[Brown(1966)]{brown1966magnetoelastic}
William~Fuller Brown.
\newblock \emph{Magnetoelastic interactions}, volume~9.
\newblock Springer, 1966.

\bibitem[Lagerwall and Scalia(2012)]{lagerwall2012new}
Jan~PF Lagerwall and Giusy Scalia.
\newblock A new era for liquid crystal research: Applications of liquid
  crystals in soft matter nano-, bio-and microtechnology.
\newblock \emph{Current Applied Physics}, 12\penalty0 (6):\penalty0 1387--1412,
  2012.

\bibitem[Shuai et~al.(2016)Shuai, Klittnick, Shen, Smith, Tuchband, Zhu,
  Petschek, Mertelj, Lisjak, {\v{C}}opi{\v{c}}, Maclennan, Glaser, and
  Clark]{Shui}
Min Shuai, Arthur Klittnick, Yongqiang Shen, Gregory~P Smith, Michael~R
  Tuchband, Chenhui Zhu, Rolfe~G Petschek, A.~Mertelj, Darja Lisjak, Martin
  {\v{C}}opi{\v{c}}, J.~E Maclennan, M.~A. Glaser, and N.~A. Clark.
\newblock Spontaneous liquid crystal and ferromagnetic ordering of colloidal
  magnetic nanoplates.
\newblock \emph{Nature Communications}, 7\penalty0 (1):\penalty0 10394, 2016.

\bibitem[Brochard and De~Gennes(1970)]{brochard1970theory}
F~Brochard and PG~De~Gennes.
\newblock Theory of magnetic suspensions in liquid crystals.
\newblock \emph{Journal de Physique}, 31\penalty0 (7):\penalty0 691--708, 1970.

\bibitem[Chen and Amer(1983)]{chen1983observation}
Shu-Hsia Chen and Nabil~M Amer.
\newblock Observation of macroscopic collective behavior and new texture in
  magnetically doped liquid crystals.
\newblock \emph{Physical Review Letters}, 51\penalty0 (25):\penalty0 2298,
  1983.

\bibitem[Mertelj et~al.(2013)Mertelj, Lisjak, Drofenik, and
  {\v{C}}opi{\v{c}}]{Mertelj}
Alenka Mertelj, Darja Lisjak, Miha Drofenik, and Martin {\v{C}}opi{\v{c}}.
\newblock Ferromagnetism in suspensions of magnetic platelets in liquid
  crystal.
\newblock \emph{Nature}, 504\penalty0 (7479):\penalty0 237--241, 2013.

\bibitem[Bisht et~al.(2020)Bisht, Wang, Banerjee, and Majumdar]{Bisht2020}
Konark Bisht, Yiwei Wang, Varsha Banerjee, and Apala Majumdar.
\newblock Tailored morphologies in two-dimensional ferronematic wells.
\newblock \emph{Physical Review E}, 101\penalty0 (2):\penalty0 022706, 2020.

\bibitem[Canevari et~al.(2025)Canevari, Dipasquale, and
  Stroffolini]{Canevari2025}
Giacomo Canevari, Federico~Luigi Dipasquale, and Bianca Stroffolini.
\newblock The formation of gradient-driven singular structures of codimension
  one and two in two-dimensions: The case study of ferronematics.
\newblock \emph{arXiv:2505.07506}, 2025.

\bibitem[Canevari et~al.(2023)Canevari, Majumdar, Stroffolini, and
  Wang]{Canevari}
Giacomo Canevari, Apala Majumdar, Bianca Stroffolini, and Yiwei Wang.
\newblock Two-dimensional ferronematics, canonical harmonic maps and minimal
  connections.
\newblock \emph{Archive for Rational Mechanics and Analysis}, 247\penalty0
  (6):\penalty0 110, 2023.

\bibitem[Dalby et~al.(2022)Dalby, Farrell, Majumdar, and Xia]{Dalby}
James Dalby, Patrick~E Farrell, Apala Majumdar, and Jingmin Xia.
\newblock One-dimensional ferronematics in a channel: Order reconstruction,
  bifurcations, and multistability.
\newblock \emph{SIAM Journal on Applied Mathematics}, 82\penalty0 (2):\penalty0
  694--719, 2022.

\bibitem[Maity et~al.(2021)Maity, Majumdar, and Nataraj]{maity2021parameter}
Ruma~Rani Maity, Apala Majumdar, and Neela Nataraj.
\newblock Parameter dependent finite element analysis for ferronematics
  solutions.
\newblock \emph{Computers \& Mathematics with Applications}, 103:\penalty0
  127--155, 2021.

\bibitem[Han et~al.(2020)Han, Majumdar, and Zhang]{han2020reduced}
Yucen Han, Apala Majumdar, and Lei Zhang.
\newblock A reduced study for nematic equilibria on two-dimensional polygons.
\newblock \emph{SIAM Journal on Applied Mathematics}, 80\penalty0 (4):\penalty0
  1678--1703, 2020.

\bibitem[Hubert and Sch{\"a}fer(2008)]{hubert2008magnetic}
Alex Hubert and Rudolf Sch{\"a}fer.
\newblock \emph{Magnetic domains: the analysis of magnetic microstructures}.
\newblock Springer Science \& Business Media, 2008.

\bibitem[Kru{\v z}{\' i}k and Prohl(2006)]{kruzikprohl}
Martin Kru{\v z}{\' i}k and Andreas Prohl.
\newblock Recent developments in the modeling, analysis, and numerics of
  ferromagnetism.
\newblock \emph{SIAM Review}, 48\penalty0 (3):\penalty0 439–--483, 2006.

\bibitem[DeSimone et~al.(2004)DeSimone, Kohn, M{\"u}ller, and
  Otto]{desimone2004recent}
Antonio DeSimone, Robert~V Kohn, Stefan M{\"u}ller, and Felix Otto.
\newblock Recent analytical developments in micromagnetics.
\newblock \emph{Preprint by MPI, Leipzig}, 2004.

\bibitem[Gioia and James(1997)]{Giogia}
Gustavo Gioia and Richard~D James.
\newblock Micromagnetics of very thin films.
\newblock \emph{Proceedings of the Royal Society of London. Series A:
  Mathematical, Physical and Engineering Sciences}, 453\penalty0
  (1956):\penalty0 213--223, 1997.

\bibitem[Garc{\'i}a-Cevera(2004)]{Garcia2004}
Carlos Garc{\'i}a-Cevera.
\newblock One-dimensional magnetic domain walls.
\newblock \emph{Euro. Jnl of Applied Mathematics}, 15:\penalty0 451--486, 2004.

\bibitem[Pleiner et~al.(2001)Pleiner, Jarkova, M{\"u}ller, and Brand]{Pleiner}
Harald Pleiner, E~Jarkova, HW~M{\"u}ller, and HR~Brand.
\newblock Landau description of ferrofluid to ferronematic phase transition.
\newblock \emph{Magnetohydrodynamics}, 37\penalty0 (254):\penalty0 146, 2001.

\bibitem[Calderer et~al.(2014)Calderer, DeSimone, Golovaty, and
  Panchenko]{Calderer}
Maria-Carme Calderer, Antonio DeSimone, Dmitry Golovaty, and Alexander
  Panchenko.
\newblock An effective model for nematic liquid crystal composites with
  ferromagnetic inclusions.
\newblock \emph{SIAM Journal on Applied Mathematics}, 74\penalty0 (2):\penalty0
  237--262, 2014.

\bibitem[Burden and Faires(1985)]{Burden-Faires}
RL~Burden and DJ~Faires.
\newblock \emph{Numerical analysis. 3rd edn Boston}.
\newblock MA: PWS Publishing Company, 1985.

\bibitem[Han et~al.(2021)Han, Harris, Walton, and Majumdar]{han2021tailored}
Yucen Han, Joseph Harris, Joshua Walton, and Apala Majumdar.
\newblock Tailored nematic and magnetization profiles on two-dimensional
  polygons.
\newblock \emph{Physical Review E}, 103\penalty0 (5):\penalty0 052702, 2021.

\bibitem[Kalousek et~al.(2021)Kalousek, Kortum, and
  Schl{\"o}merkemper]{kalousek2021mathematical}
Martin Kalousek, Joshua Kortum, and Anja Schl{\"o}merkemper.
\newblock Mathematical analysis of weak and strong solutions to an evolutionary
  model for magnetoviscoelasticity.
\newblock \emph{Discrete \& Continuous Dynamical Systems-Series S}, 14\penalty0
  (1), 2021.

\bibitem[DeSimone et~al.(2002)DeSimone, Kohn, M{\"u}ller, and
  Otto]{desimone2002reduced}
Antonio DeSimone, Robert~V Kohn, Stefan M{\"u}ller, and Felix Otto.
\newblock A reduced theory for thin-film micromagnetics, 2002.

\bibitem[Di~Fratta et~al.(2023)Di~Fratta, Muratov, and
  Slastikov]{di2023reduced}
Giovanni Di~Fratta, Cyrill~B Muratov, and Valeriy~V Slastikov.
\newblock Reduced energies for thin ferromagnetic films with perpendicular
  anisotropy.
\newblock \emph{arXiv preprint arXiv:2306.07634}, 2023.

\bibitem[Kohn and Slastikov(2005)]{kohn2005another}
Robert~V Kohn and Valeriy~V Slastikov.
\newblock Another thin-film limit of micromagnetics.
\newblock \emph{Archive for Rational Mechanics and Analysis}, 178:\penalty0
  227--245, 2005.

\bibitem[Golovaty et~al.(2015)Golovaty, Montero, and
  Sternberg]{golovaty2015dimension}
Dmitry Golovaty, Jos{\'e}~Alberto Montero, and Peter Sternberg.
\newblock Dimension reduction for the {L}andau-de {G}ennes model in planar
  nematic thin films.
\newblock \emph{Journal of Nonlinear Science}, 25\penalty0 (6):\penalty0
  1431--1451, 2015.

\bibitem[Mottram and Newton(2014)]{newtonmottram}
Nigel~J. Mottram and Chris J.~P. Newton.
\newblock Introduction to q-tensor theory.
\newblock \emph{arXiv:1409.3542}, 2014.

\bibitem[Ciarlet(2021)]{ciarlet2021mathematical}
Philippe~G Ciarlet.
\newblock \emph{Mathematical elasticity: Three-dimensional elasticity}.
\newblock SIAM, 2021.

\bibitem[Rindler(2018)]{Rindler}
Filip Rindler.
\newblock \emph{Calculus of variations}, volume~5.
\newblock Springer, 2018.

\bibitem[Evans(2022)]{evans2022partial}
Lawrence~C Evans.
\newblock \emph{Partial differential equations}, volume~19.
\newblock American Mathematical Society, 2022.

\bibitem[Onsager(1931)]{onsager1931reciprocal}
Lars Onsager.
\newblock Reciprocal relations in irreversible processes. {II.}
\newblock \emph{Physical Review}, 38\penalty0 (12):\penalty0 2265, 1931.

\bibitem[Smith(1985)]{smith1985numerical}
Gordon~D Smith.
\newblock \emph{Numerical solution of partial differential equations: finite
  difference methods}.
\newblock Oxford university press, 1985.

\bibitem[Luo et~al.(2012)Luo, Majumdar, and Erban]{luo2012}
C.~Luo, A.~Majumdar, and R.~Erban.
\newblock Multistability in planar liquid crystal wells.
\newblock \emph{Phys. Rev. E}, 85\penalty0 (6):\penalty0 061702, 2012.

\bibitem[Tsakonas et~al.(2007)Tsakonas, Davidson, Brown, and
  Mottram]{tsakonas2007}
C.~Tsakonas, A.~J. Davidson, C.~V. Brown, and N.~J. Mottram.
\newblock Multistable alignment states in nematic liquid crystal filled wells.
\newblock \emph{Applied Physics Letters}, 90\penalty0 (11):\penalty0 111913,
  2007.

\bibitem[Ball(2017)]{ball_notes}
J.~M. Ball.
\newblock Liquid crystals and their defects.
\newblock \emph{arXiv:1706.06861v3 [cond-mat.soft]}, 2017.

\bibitem[Wachowiak et~al.(2002)Wachowiak, Wiebe, Bode, Pietzsch, Morgenstern,
  and Wiesendanger]{wachowiak2002direct}
A~Wachowiak, J~Wiebe, M~Bode, O~Pietzsch, M~Morgenstern, and R~Wiesendanger.
\newblock Direct observation of internal spin structure of magnetic vortex
  cores.
\newblock \emph{Science}, 298\penalty0 (5593):\penalty0 577--580, 2002.

\bibitem[Bethuel et~al.(1994)Bethuel, Brezis, and
  H{\'e}lein]{bethuel1994ginzburg}
Fabrice Bethuel, Ha{\"\i}m Brezis, and Fr{\'e}d{\'e}ric H{\'e}lein.
\newblock \emph{Ginzburg-landau vortices}, volume~13.
\newblock Springer, 1994.

\bibitem[Ravnik and {\v{Z}}umer(2009)]{Zumar2009}
M.~Ravnik and S~{\v{Z}}umer.
\newblock Landau–de {G}ennes modelling of nematic liquid crystal colloids.
\newblock \emph{Liquid Crystals}, 36\penalty0 (10-11):\penalty0 1201--1214,
  2009.

\bibitem[Yin et~al.(2020)Yin, Wang, Chen, Zhang, and Zhang]{wangprl}
J.~Y. Yin, Y.~W. Wang, J.~Z.~Y. Chen, P.~W. Zhang, and L.~Zhang.
\newblock {Construction of a pathway map on a complicated energy landscape}.
\newblock \emph{Physical Review Letters}, 124:\penalty0 090601, 2020.

\end{thebibliography}

\end{document}